\def\C{\ensuremath\mathbb{C}}
\def\Z{\ensuremath\mathbb{Z}}
\def\P{\ensuremath\mathbb{P}}
\def\E{\ensuremath\mathbb{E}}
\numberwithin{equation}{section}
\newtheorem{thm}{Theorem}
\newtheorem{lem}[thm]{Lemma}
\newtheorem{prop}[thm]{Proposition}
\newtheorem{cor}[thm]{Corollary}
\newtheorem{df}[thm]{Definition}
\newtheorem*{rem}{Remark}
\numberwithin{thm}{section}
\begin{document}

	\title[Upper tail distributions  of central $L$-values of elliptic curves]{Upper tail distributions  of central $L$-values of  quadratic twists of elliptic curves at the variance scale}
	\author{Nathan Creighton}
	\address{N. Creighton, Mathematical Institute, University of Oxford, UK}
	\email{creighton@maths.ox.ac.uk}
	\maketitle	
	\begin{abstract}
		We consider the large deviations at the order of the variance for the central value of a family of $L$-functions among the members with bounded discriminant. When there is an upper bound on an integer moment of the central value twisted by a short Dirichlet polynomial,  we can establish upper bounds on the density of members exhibiting a large central value. We adapt the techniques from Arguin and Bailey for large deviations of the Riemann zeta function to prove results on the degree two family of quadratic twists of an elliptic curve. This upper bound improves on density results previously obtained by Radziwi\l{\l} and Soundararajan.
\end{abstract}

\section{Introduction}
A key area of interest in the study of families of $L$-functions is their moments. The Lindel\"{o}f Hypothesis, which has implications for zero density estimates of $L$-functions, may be stated in terms of the moments possessing subpolynomial growth. Usually, the length of the Dirichlet polynomials required to approximate powers of $L$-functions obstruct taking large moments and most non-integer moments accurately; leading-order asymptotics for moments of the Riemann zeta function are currently only known for the second \cite{HL}, and fourth \cite{Ing}. Even for the sixth moment, it is unknown whether there there is subpolynomial growth. The problem of bounding higher fractional moments $I_k(T)$ of the Riemann zeta function is even less well understood, and current results rely on interpolating with bounds on a higher moment, such as the twelfth, due to Heath-Brown \cite{HB78}. Assuming RH, much more is known, and the refinement of Harper \cite{Harper} to Soundararajan's scheme \cite{Sound} gives upper bounds of the correct size for $I_k(T)$ for $k\ge 1$, and lower bounds of the same order are found in \cite{RSlower} and \cite{HSlower}. This extended the work of Radziwi\l{\l} in \cite{4.36}, which gave upper bounds for $I_k(T)$ of the same order,
 for all $k<2+\frac{2}{11}.$ In \cite{KS}, the authors conjectured that the moments of the Riemann zeta function should behave like the moments of the characteristic polynomial of a unitary matrix, and were thus able to predict asymptotics for  $I_k(T)$, for all $k>-\frac{1}{2}.$ \\
The following Central Limit Theorem due to Selberg \cite{Sel} provides the distribution of $\log\zeta\left(\frac{1}{2}+it\right)$ at the level of the standard deviation.
\begin{thm}[Selberg's Central Limit Theorem]\label{CLT} Let $E\subset\C$ be any measurable set. Then \begin{align}\label{SCLT}
		\lim_{T\to \infty}\frac{1}{T}\left|\left\{t\in [T,2T]: \frac{\log \zeta\left(\frac{1}{2}+it\right)}{\sqrt{\frac{1}{2}\log\log T}}\in E\right\}\right|=\frac{1}{2\pi}\iint_E e^{-\frac{x^2+y^2}{2}}dxdy.
\end{align} \end{thm}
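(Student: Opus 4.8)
The plan is to prove Theorem~\ref{CLT} by the method of moments, after replacing $\log\zeta(\tfrac12+\ii t)$ by a short Dirichlet polynomial over primes. Identifying $\C$ with $\R^2$ through $z=x+\ii y$ and writing $\log\zeta(\tfrac12+\ii t)=\log|\zeta(\tfrac12+\ii t)|+\ii\,\theta(t)$ with $\theta(t)=\arg\zeta(\tfrac12+\ii t)$ defined by continuous variation, it suffices to show that for every fixed pair of nonnegative integers $(a,b)$,
\begin{equation}\label{mom-goal}
\frac{1}{T}\int_T^{2T}\left(\frac{\log|\zeta(\tfrac12+\ii t)|}{\sqrt{\tfrac12\log\log T}}\right)^{\!a}\left(\frac{\theta(t)}{\sqrt{\tfrac12\log\log T}}\right)^{\!b}\,dt\;\longrightarrow\;\mu_a\mu_b,
\end{equation}
where $\mu_m$ is the $m$-th moment of $\mathcal N(0,1)$: $\mu_m=0$ for $m$ odd and $\mu_m=(m-1)!!$ for $m$ even. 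Since the bivariate Gaussian in \eqref{SCLT} is determined by its moments (Carleman's condition), \eqref{mom-goal} is equivalent to convergence in distribution of $\log\zeta(\tfrac12+\ii t)/\sqrt{\tfrac12\log\log T}$ (for $t$ uniform on $[T,2T]$) to that Gaussian, which is \eqref{SCLT}.

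\smallskip
\emph{Reduction to a prime sum.} Fix $(a,b)$, choose $\delta=\delta(a,b)>0$ sufficiently small, and set $X=T^{\delta}$ and $P(t)=\sum_{p\le X}p^{-1/2-\ii t}$; by Mertens, $\sum_{p\le X}\tfrac1p=\log\log X+\OO_\delta(1)=\log\log T+\OO_\delta(1)$, and after splitting into real and imaginary parts this accounts for both the scale $\log\log T$ and the factor $\tfrac12$ in \eqref{SCLT}. Invoking Selberg's formula, which expresses $\log\zeta(s)$ for $\re s\ge\tfrac12$ as a truncated Dirichlet series over prime powers plus a sum over the zeros $\rho$ of $\zeta$ with $|\im\rho-\im s|$ small, and bounding the zero contribution by a classical zero-density estimate, one produces an exceptional set $\mathcal B\subset[T,2T]$ with $\m(\mathcal B)=\oo(T)$ outside of which
\begin{equation}\label{sel-approx}
\log\zeta(\tfrac12+\ii t)=P(t)+\OO\!\left((\log\log T)^{1/4}\right),
\end{equation}
the prime-power terms $p^2,p^3,\dots$ contributing only $\OO(1)$. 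Redefining $\log\zeta(\tfrac12+\ii t)$ to equal $P(t)$ on $\mathcal B$ yields a function $L(t)$ satisfying \eqref{sel-approx} for \emph{all} $t\in[T,2T]$ and whose distribution differs from that of $\log\zeta(\tfrac12+\ii t)$ by $\OO(\m(\mathcal B)/T)=\oo(1)$; it therefore suffices to prove \eqref{mom-goal}, and the consequent convergence in distribution, with $L$ in place of $\log\zeta$.

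\smallskip
\emph{Moments of the prime sum.} Writing $\re P(t)=\sum_{p\le X}\cos(t\log p)/\sqrt p$ and $\im P(t)=-\sum_{p\le X}\sin(t\log p)/\sqrt p$, I expand $(\re P)^a(\im P)^b$ and integrate term by term. Each summand is a constant multiple of $\tfrac1T\int_T^{2T}\cos(t\log m)\,dt$ or $\tfrac1T\int_T^{2T}\sin(t\log m)\,dt$, where $m$ is a ratio $p_1\cdots p_i/(q_1\cdots q_j)$ of products of primes $\le X$ with $i+j\le a+b$; this integral is $\OO\!\left(1/(T|\log m|)\right)$ when $m\ne1$, and since $X^{a+b}$ is only a small power of $T$ the total off-diagonal contribution is $\oo(1)$. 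The surviving diagonal ($m=1$) is a Wick-type sum over pairings of the $a+b$ indices: using $\tfrac1T\int_T^{2T}\cos^2(t\log p)\,dt=\tfrac12+\oo(1)$, $\tfrac1T\int_T^{2T}\sin^2(t\log p)\,dt=\tfrac12+\oo(1)$, and $\tfrac1T\int_T^{2T}\cos(t\log p)\sin(t\log p)\,dt=\oo(1)$ --- the last identity being exactly what makes $\re P$ and $\im P$ decouple --- together with the fact that pairings in which some prime is repeated more than twice contribute $\OO_{a,b}(1)$, one obtains
\begin{align*}
\frac1T\int_T^{2T}(\re P)^a(\im P)^b\,dt
&=\mu_a\mu_b\Bigl(\sum_{p\le X}\frac{1}{2p}\Bigr)^{\!(a+b)/2}+\oo\!\left((\log\log T)^{(a+b)/2}\right)\\
&=\mu_a\mu_b\left(\tfrac12\log\log T\right)^{\!(a+b)/2}+\oo\!\left((\log\log T)^{(a+b)/2}\right).
\end{align*}
Because $L=P+\OO((\log\log T)^{1/4})$ with $(\log\log T)^{1/4}=\oo(\sqrt{\log\log T})$, a binomial expansion (using the moment bounds just established for $\re P$ and $\im P$) shows that the left side of \eqref{mom-goal}, with $L$ in place of $\log\zeta$, tends to $\mu_a\mu_b$ after dividing through by $(\tfrac12\log\log T)^{(a+b)/2}$. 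The product form $\mu_a\mu_b$ encodes the independence of the two limiting Gaussians, and the method of moments completes the proof.

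\smallskip
The main obstacle is \eqref{sel-approx}: this is the one step that requires genuine analytic information about $\zeta$. One combines the classical zero-density estimate $N(\sigma,T)\ll T^{1-c(\sigma-1/2)}\log T$, the bound $\OO(\log T)$ for the number of zeros with ordinate in a unit window, and a mean-value estimate controlling how often $|\zeta(\tfrac12+\ii t)|$ is atypically small, to show that outside a set of measure $\oo(T)$ the zeros near $\tfrac12+\ii t$ perturb $\log\zeta$ by only $\oo(\sqrt{\log\log T})$, the error being controlled well enough to survive the replacement of $\log\zeta$ by $L$. The Dirichlet-polynomial moment identity, by contrast, is routine: it rests only on the near-orthogonality of $\{p^{-\ii t}:p\le X\}$ over $[T,2T]$ once $X=T^{\delta}$ with $\delta$ small, and the combinatorics is the same Wick calculus that drives the central limit theorem for sums of weakly dependent bounded random variables. (One could instead follow Radziwi\l{}\l{}--Soundararajan and replace \eqref{sel-approx} by a comparison of $\log\zeta(\tfrac12+\ii t)$ with $\log\zeta(\tfrac12+1/\log X+\ii t)$, trading the zero-density input for a smoothing argument, but some such ingredient is indispensable.)
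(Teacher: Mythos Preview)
The paper does not prove Theorem~\ref{CLT}; it is quoted in the introduction as background, with attribution to Selberg~\cite{Sel}, and plays no role in the arguments that follow. There is therefore no ``paper's own proof'' to compare against.

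That said, your sketch is a reasonable outline of the classical method-of-moments proof (essentially Selberg's original argument, with the Radziwi\l\l--Soundararajan alternative noted at the end). Two remarks. First, the statement as written asks for \emph{arbitrary} measurable $E\subset\C$, but the method of moments yields only convergence in distribution, hence \eqref{SCLT} for continuity sets of the limiting Gaussian (sets whose boundary has Gaussian measure zero); this is the standard formulation, and the discrepancy is with the statement rather than with your argument. Second, the specific error $\OO\!\left((\log\log T)^{1/4}\right)$ in \eqref{sel-approx} is stronger than what the cited inputs typically deliver; what one actually needs, and what the zero-density or mollifier arguments give, is an error of size $\oo\!\left(\sqrt{\log\log T}\right)$ outside a set of measure $\oo(T)$, which is already enough for the binomial-expansion step. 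With those caveats the outline is sound.
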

However, to understand the moments of the Riemann zeta function, it is the distribution at the level of variance which must be understood, and in \cite{AB} they study this to give an upper bound on $I_k(T)$ for all $0\le k\le 2.$  

Besides that of the Riemann zeta function, there has been much work on the moments of other families of $L$-functions. In \cite{KzS}, the values of a family of $L$-functions is conjecturally associated to the characteristic polynomial of a classical compact group of unitary matrices. In \cite{CF}, by calculating the moments over the group of matrices conjectures are produced for the moments over a family of $L$-functions with bounded conductor at a fixed point. More precisely, if $\mathscr{F}$ is a family of $L$-functions $L_f$ with central value $\frac{1}{2},$ they construct sample spaces of families with bounded \textit{conductor} $c(f).$  This leads to a conjecture for the moments of the shape
\begin{equation}\label{momconjgen}
	\frac{1}{Q^\ast} \sum_{\substack{f\in \mathscr{F}\\
		c(f)\le Q}}V\left(L_f\left(\frac{1}{2}\right)\right)^k\sim \frac{a_kg_k}{\Gamma (1+B(k))}\log ^{B(k)}\left(Q^A\right). 
\end{equation}
Here $V(z)$ is some measure of the size of $z$, and so they put $V(z)=z$ or $|z|^2 $ depending on whether the values are positive or complex, and $a_k,g_k, A$ and $B(k)$ are parameters depending on the functional equation, symmetry group and the family of the $L$-function.
 
 In \cite{KS2} they conjecture that when the family depends on quadratic twists, then the logarithm of the $L$-function is associated to the characteristic polynomial of an orthogonal family of matrices. By studying the moments of the characteristic polynomials in the Gaussian Orthogonal Ensemble, they were able to produce an associated conjecture for the moments of the $L$-function. The logarithms of specific $L$-functions exhibiting orthogonal and symplectic symmetries were studied in \cite{Hough14}, extending the typical study of the Keating-Snaith conjectures from the unitary setting.

The orthogonality properties for any family produced by quadratic twists of an $L$-function all emanate from the orthogonality properties of quadratic characters. This has been studied by \cite{GZ24} in producing moments for the central values of quadratic Dirichlet $L$-functions. The general study of moments of a family of quadratic twists of an  $L$-function is similar; the terms twisted by a square remain similar across all choices of discriminant, so should be thought of as the diagonal terms, while the terms twisted by a non-square should exhibit a high level of cancellation when averaging across the discriminant, so should be viewed as the off-diagonal terms.

The fractional moments of a random variable with a log-normal distribution, which many families of $L$-functions are predicted in \cite{KS2} to possess, are controlled by their large deviations at the order of their variance. In the unitary setting, the large deviations of a characteristic polynomial were considered in \cite{hko}, for deviations ranging in order from the standard deviation to the pointwise maximum. Analogous results might be predicted to hold for the logarithm of $L$-functions, at least for ranges up to the variance.  In \cite{AB}, the large deviations of the Riemann zeta function high up the critical line are bounded above by viewing the logarithm  as a random walk on primes, and applying the barrier method. This provides an upper bound on all fractional moments of the  Riemann zeta function up the second, which is believed to be sharp up to a constant. Unlike the results from \cite{Sound} and \cite{Harper}, which assumed RH, their results were unconditional.

Random walks can be used to model the logarithms of many different families of values of $L$-functions. In \cite{AC25}, the methods were adapted to bound the large deviations of central values of Dirichlet $L$-functions with fixed large modulus. Bounding moments and large deviations of an $L$-function becomes trickier as the degree increases. Here, we give the first example of the application of the barrier method to the context of a degree $2$ family of $L$-functions, the quadratic twists $L(s,E_d)$ of the $L$-function associated to an elliptic 
curve, $E$. We put $N$ to be the conductor of $E$, and $N_0=\text{lcm}(N,8).$ Then we can consider the distribution of the central values of these $L$-functions, as we range over twists with $|d|\le X$, for some large parameter $X$. Each such $L$-function has a Dirichlet series
\begin{equation}
	L(s,E_d)=\sum_n \frac{a_n\chi_d(n)}{n^s},
\end{equation}
 an Euler product whose factor at  a prime $p$ depends on whether there is good or bad reduction, and a completed $L$-function

\begin{equation}
	\Lambda(s,E_d)=\left(\frac{\sqrt{N}|d|}{2\pi}\right)^s\Gamma\left(s+\frac{1}{2}\right) L\left(s,E_d\right).
\end{equation}
Here the coefficients of $L\left(s,E_d\right)$ are normalised with the Hasse bound reading $|a(p)|\le 2$, so that the central value lies at $\frac{1}{2}.$ This $L$-function obeys the functional equation
\begin{equation}\label{functeqn}
	\Lambda(s,E_d)= \epsilon_E(d)\Lambda(1-s,E_d),
\end{equation}
for some \textit{root number} $\epsilon_E(d)\in \{\pm 1\}.$ 
Moreover, the value of $\epsilon_E(d)$ only depends on the value of $d$ mod $N_0,$ since $\epsilon_E(d)=\epsilon_E \chi_d(-N).$ 
\subsection{Main results} We want to consider how often the central $L$-value takes large values.
If $\epsilon_E(d)=-1,$ then the odd symmetry in Equation \eqref{functeqn} means the central value vanishes, and so to study an interesting distribution of central values, we restrict to the discriminants with $\epsilon_E(d)=+1.$
That is, the set of discriminants $\mathcal{E}$ to consider is the fundamental discriminants coprime to $2N,$ with \textit{root number} $\epsilon_E(d)=+1.$ 

Theorem 1 in \cite{RS2} states that for $0\le \alpha\le 1,$

\begin{equation}\label{olddev}
	\left|\left\{d\in \mathcal{E}, |d|\le X: \log\left(L\left(\frac{1}{2}, E_d\right)\right)\ge \left(\alpha-\frac{1}{2}\right)\log\log|d|\right\}\right|\ll X(\log X)^{-\frac{\alpha^2}{2}}.
\end{equation}
This is off from the expected Gaussian distribution by a factor of the standard deviation, $\sqrt{\log\log X},$ and the rest of this paper will be devoted to retrieving the predicted tail.

 By quoting results from \cite{RS2}, we can impose certain arithmetic properties on the set of discriminants considered. We can consider the distribution of central values restricted to discriminants $d$ that are multiples of a given natural number $v$ with $(v,N_0)=1,$ and have sign $\mathscr{O}$. Moreover, we can restrict to a given congruence class $a\in \left(\Z/N_0\Z\right)^\ast,$ with $a\equiv 1 \text{ mod 4}$. This leads us to define the set of permissible discriminants for such a choice of parameters:
\begin{equation}\label{SS}
	\mathcal{E}(\mathscr{O},a,v)=\left\{d\in \mathcal{E}: \mathscr{O} d>0,\ v|d,\ d\equiv a \text{ mod }N_0\right\}.
\end{equation} We prove the following theorem, which generalises the distribution to members of $	\mathcal{E}(\mathscr{O},a,v)$.\newpage
 \begin{thm} \label{largedevthm} Let $v$ be a fixed natural number with $(v,N_0)=1, \mathscr{O}\in\{\pm 1\}$ be a fixed sign, $a\in \left(\Z/N_0\Z\right)^\ast$ be a fixed congruence class with $a\equiv 1$ mod $4$ such that $\epsilon_E(a)=+1,$ and $0<\alpha<\frac{1}{2}$. If $V\sim \alpha \log\log X$,  then for $X$ sufficiently large, we have
	\begin{equation}\label{eqlargedev}
		\left|\left\{d\in \mathcal{E}(\mathscr{O},a,v):\ |d|\le X,\ \log\left(L\left(\frac{1}{2}, E_d\right)\right)\ge V-\frac{\log\log X}{2}\right\}\right|\ll \frac{X}{vN_0}\int^\infty_V\frac{e^{-\frac{y^2}{2\log\log X}}}{\sqrt{\log\log X}}dy,
	\end{equation}
	where the implicit constant is uniform for $\alpha$ in $(0,B),$ for any $B<\frac{1}{2}$.

\end{thm}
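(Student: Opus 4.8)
\emph{Strategy.} The plan is to reduce the count to a large-deviation estimate for a random walk indexed by primes, and then to run the barrier method of Arguin and Bailey \cite{AB} in the form adapted to $L$-values in \cite{AC25}. Write $\mathcal L=\log\log X$. The first step is to invoke the unconditional upper bound from \cite{RS2}: apart from a set of $d$ of size $\ll X/\log X$, one has, for $d\in\mathcal E(\mathscr O,a,v)$ with $|d|\le X$,
\[
\log L\!\left(\tfrac12,E_d\right)\ \le\ \mathcal P(d)-\tfrac12\mathcal L+\OO(1),\qquad \mathcal P(d):=\sum_{p\le X}\frac{a(p)\chi_d(p)}{\sqrt p}\,w\!\left(\tfrac{\log p}{\log X}\right),
\]
for a suitable smooth weight $w$ supported in $[0,1]$; the shift $-\tfrac12\mathcal L$ is the contribution $\sum_{p}\tfrac{a(p)^2-2}{2p}\chi_d(p)^2=-\tfrac12\mathcal L+\OO(1)$ of the prime squares to the logarithm of the Euler product, the higher prime powers contributing $\OO(1)$. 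Since $V\asymp\mathcal L$ and the exceptional set is negligible against the right-hand side, it suffices to prove
\[
\#\bigl\{d\in\mathcal E(\mathscr O,a,v):\ |d|\le X,\ \mathcal P(d)\ge V\bigr\}\ \ll\ \frac{X}{vN_0}\int_V^\infty\frac{e^{-y^2/(2\mathcal L)}}{\sqrt{\mathcal L}}\,dy.
\]
Because $a(p)^2=1+\lambda_{\mathrm{sym}^2 E}(p)$ and $L(s,\mathrm{sym}^2 E)$ is holomorphic and non-vanishing at $s=1$, the variance proxy of $\mathcal P$ is $\sum_{p\le X}a(p)^2/p=\mathcal L+\OO(1)$, matching the Gaussian on the right.

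\emph{The walk and the barrier.} Split the primes up to $X$ into $K\asymp\mathcal L$ consecutive ranges $I_1,\dots,I_K$, each carrying a bounded amount $\sum_{p\in I_j}a(p)^2/p\asymp1$ of variance, and put $S_j(d)=\sum_{p\in I_1\cup\dots\cup I_j}\tfrac{a(p)\chi_d(p)}{\sqrt p}w(\cdots)$, so that $S_K=\mathcal P$. The increments $Y_j=S_j-S_{j-1}$ are short Dirichlet polynomials supported on $I_j$; by the orthogonality of the characters $\chi_d$ over $\mathcal E(\mathscr O,a,v)$ --- square twists giving the diagonal main term, non-square twists being negligible --- the family $(S_j(d))_j$ behaves, at the level of its moments, like the partial sums of a sum of independent variables of total variance $\mathcal L+\OO(1)$. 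Introduce a barrier $\Gamma_j$ equal to the linear interpolant from $0$ to $V$ plus a correction $\rho_j$ growing like a power of $\min(j,K-j)$ away from the endpoints, and decompose
\[
\{\mathcal P\ge V\}\ \subseteq\ \Bigl(\bigcup_{j<K}\{\,S_i<\Gamma_i\ (i<j),\ S_j\ge\Gamma_j\,\}\Bigr)\ \cup\ \{\,S_i<\Gamma_i\ (i<K),\ S_K\ge V\,\}.
\]

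\emph{The two regimes.} On the last (``good'') event the walk stays under the barrier and still reaches $V$; tilting the walk by $V/\mathcal L$ turns this into a ballot-type estimate --- the tilted walk must remain below a barrier lying a bounded amount above its mean --- whose probability is $\ll1/\sqrt{\mathcal L}$ by the reflection principle, and undoing the tilt inserts a factor $e^{-V^2/(2\mathcal L)}$; with $\int_V^\infty e^{-y^2/(2\mathcal L)}\,dy\asymp(\mathcal L/V)e^{-V^2/(2\mathcal L)}$ this produces the claimed main term. To make the tilt rigorous one expresses the relevant weighted count as $\tfrac{X}{vN_0}$ times a twisted moment: for the ranges with $I_j\subseteq[2,X^{o(1)}]$ the tilting factor is itself a short Dirichlet polynomial handled by character orthogonality, whereas for the $\OO(\log\mathcal L)$ remaining (``tail'') ranges --- primes between $X^{o(1)}$ and a fixed power of $X$ --- one uses low-degree truncations of the tilting exponential together with the honest twisted first moment $\sum_d L(\tfrac12,E_d)\,|M(d)|^2\ll\tfrac{X}{vN_0}$ of the family, $M$ a short Dirichlet polynomial; it is exactly this $m=1$ integer moment, available unconditionally only in the regime corresponding to $0<\alpha<\tfrac12$, that limits the range, just as the Arguin-Bailey scheme for $\zeta$ rests on the mollified second moment and reaches fractional moments of $\zeta$ up to the second. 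The ``bad'' pieces are handled one range at a time: the event $S_j\ge\Gamma_j$ (the walk being below the barrier before time $j$) is bounded by Markov's inequality against a twisted moment, giving a quantity that decays like $e^{-\Gamma_j^2/(2v_j)}$ with $v_j=\sum_{p\in I_1\cup\dots\cup I_j}a(p)^2/p\le\mathcal L$; since the correction $\rho_j$ makes $\Gamma_j^2/v_j$ exceed $V^2/\mathcal L$ by an amount summable in $j$, the bad pieces together contribute $\ll\tfrac{X}{vN_0}\tfrac1{\sqrt{\mathcal L}}e^{-V^2/(2\mathcal L)}$. Summing the three contributions proves the theorem; keeping all constants explicit, and using $B<\tfrac12$ to bound $\alpha$ away from the edge, yields the stated uniformity.

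\emph{The main obstacle.} The heart of the matter is the twisted first moment
\[
\sum_{\substack{d\in\mathcal E(\mathscr O,a,v)\\ |d|\le X}} L\!\left(\tfrac12,E_d\right)|M(d)|^2\ \ll\ \frac{X}{vN_0},
\]
uniformly over the congruence class $a$, the multiplier $v$ and the sign $\mathscr O$, for Dirichlet polynomials $M(d)=\sum_{n}\tfrac{b_n\chi_d(n)}{\sqrt n}$ of a suitable (fixed positive, or mildly decaying) level. Proving it means opening $L(\tfrac12,E_d)$ by its approximate functional equation --- of effective length $\asymp|d|$, since the analytic conductor is $\asymp N|d|^2$ --- separating the square twists, which produce the main term together with its arithmetic factor involving $N_0$, $v$, the bad primes $p\mid N$ and the symmetric square Euler factor that fixes the variance constant, and bounding the non-square twists by Poisson summation in $d$ over the quadratic characters, keeping $n\,\ell_1\ell_2$ below $X^{2-\e}$. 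This is the degree-two analogue of a step that is by now standard for $\zeta$ and for Dirichlet $L$-functions, but it must be carried out with the Hecke coefficients $a(p)$ and the quadratic-twist harmonic analysis, and obtaining it with a long enough $M$, with the correct main-term constant, and uniformly in the parameters is where I expect the main technical effort to lie; the remaining ingredients --- the quasi-independence of the prime ranges over the family, the validity of truncating the tilting exponentials, and the matching of the mollified range with the tail --- are careful but essentially routine adaptations of the Arguin-Bailey apparatus.
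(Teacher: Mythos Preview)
Your opening move is where the argument breaks. You claim that \cite{RS2} gives, apart from an exceptional set of size $\ll X/\log X$, an unconditional pointwise inequality
\[
\log L\!\left(\tfrac12,E_d\right)\ \le\ \mathcal P(d)-\tfrac12\mathcal L+\OO(1),
\]
and then reduce the whole problem to a large-deviation estimate for the Dirichlet polynomial $\mathcal P(d)$. No such inequality is proved in \cite{RS2}; bounds of that shape come from Soundararajan's explicit-formula method and require GRH for $L(s,E_d)$. The very reason the Arguin--Bailey scheme exists (and the reason the present paper adapts it) is to obtain the sharp tail \emph{without} any pointwise inequality relating $\log L$ to a prime sum. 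Once your first reduction is removed, your one-sided upper barrier and the ballot/reflection estimate for ``$\mathcal P(d)\ge V$ while staying under the barrier'' have nothing to act on.

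The paper's mechanism is structurally different and you should internalise it. The barrier is \emph{two-sided}: one tracks events $G_r=\{L_j<S_j<U_j,\ j\le r\}$ with $S_r=\sum_{p\le X_r}a(p)\chi_d(p)p^{-1/2}$. The decomposition is
\[
\P(H)=\sum_{r<R}\P(H\cap G_r\cap G_{r+1}^c)+\P(H\cap G_R),
\]
and $G_r\cap G_{r+1}^c$ splits into $A_{r+1}^c$ (walk goes \emph{above} $U_{r+1}$) and $B_{r+1}^c$ (walk goes \emph{below} $L_{r+1}$). The upper crossing is handled, as you suggest, by high moments of the increment $S_{r+1}-S_r$ alone. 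The lower crossing is where the $L$-value enters, and it enters \emph{as a factor in Markov}, not via a pointwise bound: on $H$ one has $L\ge e^{V}/\sqrt{\log X}$, while on $G_r\cap B_{r+1}^c$ the positive mollifier $M_{r+1}=\prod_{j\le r+1}\mathscr A_j$ (a truncated $\exp(-S_{r+1})$) is bounded below by roughly $e^{-S_{r+1}}$, hence large. Markov against $\E[L\,M_{r+1}\,Q^2]$, controlled by the twisted mollified first moment (the paper's Proposition~\ref{twistmollprop}), then forces the event to be rare. Finally $\P(H\cap G_R)\le\P(G_R)$ is just the probability that $S_R$ lands in the narrow window $[L_R,U_R]$ around $V$; no ballot argument is needed because the barrier has been designed so that this window has bounded width in the variance scale.

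You do correctly identify the twisted first moment $\sum_d L(\tfrac12,E_d)|M(d)|^2\ll X/(vN_0)$ as the decisive arithmetic input, and you correctly link its availability (first rather than second moment) to the restriction $\alpha<\tfrac12$. But you deploy it only for a few ``tail'' ranges of the good event, whereas in the actual argument it is used at \emph{every} step $r$ to control $\P(H\cap G_r\cap B_{r+1}^c)$. Rewriting your sketch around a two-sided barrier, dropping the unjustified pointwise reduction, and inserting the $L$-value directly into Markov at each lower-barrier crossing will bring it in line with the paper.
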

\begin{rem}
	The method of proof used requires modification to be valid for the case $\alpha=0,$ where $V=o(\log\log X).$ The barrier method behaves differently when the deviation is not of the same scale as the variance, and we do not consider this range.
\end{rem}
By expressing moments in terms of the measure of high points as in Section 3.1 of \cite{AB} and using dyadic dissection, we deduce a conjecturally sharp bound on fractional moments of the central value, which agrees with the bounds from Theorem 1 in \cite{RS2}.
\begin{cor}
	Let $0<\alpha<1.$ Then for $X$ sufficiently large, the fractional moments of the central value obey
	\begin{equation}
		\sum_{\substack{d\in \mathcal{E}(\mathscr{O},a,v)\\ |d|\le X}}L\left(\frac{1}{2}, E_d\right)^{\alpha}\ll \frac{X(\log X)^{\frac{\alpha^2-\alpha}{2}} }{vN_0},
	\end{equation}
	where the implicit constant is uniform for $\alpha$ in $(0,B),$ for any $B<1$.
\end{cor}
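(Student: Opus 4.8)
The plan is to pass from the moment to the high-point counts of Theorem~\ref{largedevthm} and then dissect dyadically, following the scheme of Section~3.1 of \cite{AB}. Write $\mathcal{L}=\log\log X$ and, for $V\in\R$,
\[
	\mathcal{N}(V)=\Bigl|\Bigl\{d\in\mathcal{E}(\mathscr{O},a,v):\ |d|\le X,\ \log L\bigl(\tfrac12,E_d\bigr)\ge V-\tfrac{\mathcal{L}}{2}\Bigr\}\Bigr|,
\]
so that Theorem~\ref{largedevthm} supplies $\mathcal{N}(V)\ll\frac{X}{vN_0}\int_V^\infty\frac{e^{-y^2/2\mathcal{L}}}{\sqrt{\mathcal{L}}}\,dy$ when $V\asymp\mathcal{L}$. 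The layer-cake formula applied to $x\mapsto e^{\alpha x}$, after the substitution $x=\log L(\tfrac12,E_d)=V-\tfrac{\mathcal{L}}{2}$, gives the identity
\[
	\sum_{\substack{d\in\mathcal{E}(\mathscr{O},a,v)\\ |d|\le X}}L\bigl(\tfrac12,E_d\bigr)^{\alpha}=\alpha\,(\log X)^{-\alpha/2}\int_{-\infty}^{\infty}e^{\alpha V}\,\mathcal{N}(V)\,dV.
\]
I would then split the $V$-range into $V\le 0$, a bulk window $0\le V\le(\tfrac12-\eta)\mathcal{L}$, an upper window $(\tfrac12-\eta)\mathcal{L}\le V\le\mathcal{L}$, and a far tail $V>\mathcal{L}$, cutting the two middle windows into blocks of bounded length and summing.

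The main term comes from the block around the saddle $V\approx\alpha\mathcal{L}$. Using Theorem~\ref{largedevthm} in the form $\mathcal{N}(V)\ll\frac{X}{vN_0}\cdot\frac{\sqrt{\mathcal{L}}}{V}\,e^{-V^2/2\mathcal{L}}$ (valid for $V\gtrsim\sqrt{\mathcal{L}}$) and evaluating $\int e^{\alpha V-V^2/2\mathcal{L}}\,\frac{\sqrt{\mathcal{L}}}{V}\,dV$ by Laplace's method, the Gaussian width $\sqrt{\mathcal{L}}$ is cancelled by the factor $1/V\sim 1/(\alpha\mathcal{L})$ produced by the \emph{tail-integral} shape of the estimate — this is precisely why Theorem~\ref{largedevthm} is stated with $\int_V^\infty$ rather than a pointwise density — and the block contributes $\ll\frac{X}{vN_0}\cdot\frac{1}{\alpha}e^{\alpha^2\mathcal{L}/2}$, i.e. $\ll\frac{X}{vN_0}(\log X)^{(\alpha^2-\alpha)/2}$ after the prefactor. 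For $V\le 0$ one uses the trivial bound $\mathcal{N}(V)\le\bigl|\{d\in\mathcal{E}(\mathscr{O},a,v):|d|\le X\}\bigr|\ll\frac{X}{vN_0}$, whose contribution is $\ll\frac{X}{vN_0}(\log X)^{-\alpha/2}\ll\frac{X}{vN_0}(\log X)^{(\alpha^2-\alpha)/2}$; the off-saddle bulk blocks sum geometrically against the central one. For the far tail $V>\mathcal{L}$, on which $\mathcal{N}(V)$ eventually vanishes by the convexity bound $L(\tfrac12,E_d)\ll_{\varepsilon}|d|^{\varepsilon}$, I would apply Markov's inequality to the first moment $\sum_{d}L(\tfrac12,E_d)\ll\frac{X}{vN_0}$ (again drawn from \cite{RS2} in the form adapted to $\mathcal{E}(\mathscr{O},a,v)$), obtaining $\mathcal{N}(V)\ll\frac{X}{vN_0}e^{\mathcal{L}/2-V}$ and a contribution $\ll\frac{X}{vN_0}(\log X)^{(\alpha-1)/2}$, once more below the target. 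Throughout, the implicit constants are tracked: the $\alpha$-dependence enters only through factors $1/\alpha$, $1/(1-\alpha)$ and the curvature $1/\mathcal{L}$ of the exponent, all uniform for $\alpha$ in compact subsets of $(0,1)$, which accounts both for the exclusion of $\alpha=0$ (as in the Remark) and for the restriction $B<1$.

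The step I expect to be the main obstacle is the upper window $(\tfrac12-\eta)\mathcal{L}\le V\le\mathcal{L}$, which is the one containing the saddle precisely when $\tfrac12\le\alpha<1$. In that regime $V\approx\alpha\mathcal{L}$ lies outside the range in which Theorem~\ref{largedevthm} is proved, so the central block must instead be estimated from the earlier large-deviation bound \eqref{olddev} of \cite{RS2}, which is valid on all of $0\le V\le\mathcal{L}$; the difficulty is that \eqref{olddev} is only a pointwise bound and is weaker than Theorem~\ref{largedevthm} by a factor $\sqrt{\mathcal{L}}$, so the dissection must be arranged so that this loss does not degrade the power of $\log X$. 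In effect, for $\alpha\ge\tfrac12$ the corollary must be matched against, and reduces to, the density estimates of \cite{RS2} — including the $|d|$-versus-$X$ dependence implicit in \eqref{olddev}, which is handled by quoting \cite{RS2} — and making this interface consistent with the clean exponent $(\alpha^2-\alpha)/2$ is the most delicate bookkeeping in the proof; the range $0<\alpha<\tfrac12$, by contrast, follows directly from Theorem~\ref{largedevthm} alone.
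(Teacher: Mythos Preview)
Your overall plan is exactly the paper's: both reduce the moment to the high-point counts via the layer-cake identity and dyadically dissect as in Section~3.1 of \cite{AB}. For $0<\alpha<\tfrac12$ your outline is correct and complete: the saddle $V=\alpha\mathcal{L}$ lies inside the range of Theorem~\ref{largedevthm}, the trivial bound covers $V\le 0$, the first-moment Markov bound covers $V\ge\mathcal{L}$, and the off-saddle blocks sum geometrically. You have also correctly isolated the only genuine difficulty, namely the window $V\in[(\tfrac12-\eta)\mathcal{L},\mathcal{L}]$ when $\tfrac12\le\alpha<1$.

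The gap is in the proposed treatment of that window. Feeding \eqref{olddev} into the layer-cake gives $\mathcal{N}(V)\ll X\,e^{-V^2/(2\mathcal{L})}$ \emph{without} the factor $1/\sqrt{\mathcal{L}}$; when the saddle $V=\alpha\mathcal{L}$ sits in this window the Laplace evaluation of $\int e^{\alpha V}\mathcal{N}(V)\,dV$ therefore outputs $X\sqrt{\mathcal{L}}\,(\log X)^{(\alpha^2-\alpha)/2}$, not the target. Your assertion that ``the dissection must be arranged so that this loss does not degrade the power of $\log X$'' misreads the situation: the extra $\sqrt{\log\log X}$ is not a bookkeeping artefact that a finer partition can hide in a uniform constant --- it is precisely the standard-deviation deficit between \eqref{olddev} and the sharp Gaussian tail that the paper set out to repair, and no rearrangement of the dyadic blocks can recover it. (Separately, \eqref{olddev} is stated for $\mathcal{E}$ rather than $\mathcal{E}(\mathscr{O},a,v)$, so the $1/(vN_0)$ saving is also lost on this window.) Consequently the layer-cake route, with Theorem~\ref{largedevthm} and \eqref{olddev} as the only inputs, does \emph{not} yield the corollary for $\tfrac12\le\alpha<1$ as stated. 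For that range one must invoke the fractional-moment bounds already proved in \cite{RS2} by their mollified-first-moment method --- this is what the paper is signalling with ``agrees with the bounds from Theorem~1 in \cite{RS2}'' --- rather than try to squeeze the clean exponent out of \eqref{olddev}. Your proof should simply quote \cite{RS2} for $\alpha\ge\tfrac12$ and reserve the layer-cake argument from Theorem~\ref{largedevthm} for the range $0<\alpha<\tfrac12$, which is where the present paper actually improves on prior work.
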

Note that if we take $v=1,V=\alpha\log\log X$ and sum over all choices of signs and congruences  in Theorem \ref{largedevthm}, then we improve on Equation \eqref{olddev} for the range $0<\alpha<\frac{1}{2}$ by the missing factor of the standard deviation, but having the more continuous setup of a large deviations result for $V\sim\alpha\log\log X$ gives greater flexibility, and contrasts to results dependent on the height or other arithmetic properties of $\alpha$.

\subsection{Proof method} Our proof follows the barrier methods used in \cite{AB24} and \cite{AC25} to control the logarithm of the $L$-values via partial sums. 
In order to deploy the barrier method, we need to break the primes contributing to the central value into different intervals, and study the behaviour of the primes between the endpoints of each interval. The first interval, $P_1$, will contain all the small primes. We need to take long moments to control the behaviour of the primes, and so correspondingly the primes must all be small to give a short Dirichlet polynomial. We take $$X_1=X^{\frac{1}{2\lceil(\log\log X)^2\rceil}}$$ to be the first step. Having accounted for the small primes in $P_1,$ we can then split up the larger primes into intervals. For $j\ge 2,$ we put $$l_j=2\lceil\log^{\mathbf{s}}_{j+1}(X)\rceil,$$ where
\begin{equation}\label{s}
	\mathbf{s}=\frac{10^5}{1-2\alpha},
\end{equation}  and then consider the steps $$X_j=X^{-l_j},$$ which are analogous to the time-steps $T_l$ in \cite{AB} and $q_l$ in \cite{AC25}.  Our last step will be $X_R,$ where $R$ is the largest integer satisfying $\log_{R+2}X>10^5-\log\alpha.$ The $\log\alpha$ term is necessary to ensure adequate spacing between the time steps for Equation \eqref{decompH}, when the gradient, $\kappa$, is small. For convenience in later equations, we put $$l_1=2\lceil(\log\log X)^2\rceil,\quad \text{ and }l_0=\left(2\lceil(\log\log X)^2\rceil\right)^{10^{-5}}+l_1.$$

The choice of $\mathbf{s}$ in Equation \eqref{s} will be required to prove the necessary bounds for Theorem \ref{largedevthm}, but broadly speaking, using a twisted mollifier for the first moment and Markov's inequality in the barrier method for a walk of length $\alpha<\frac{1}{2}$ requires an interval of length proportional to $\left(\frac{1}{2}-\alpha\right)^{-1}$ for the variance, $\log\log p.$

We consider the logarithm as split up into its contributions from the different intervals of primes separated by the $X_j$. Here, we define the partition of primes up to $X_R$ by setting $P_1$ to be the primes  up to $X_1,$ and for $2\le j\le R,$ setting \begin{equation}
P_j=\{p\in (X_{j-1},X_j]\}.\end{equation}

\subsection{Choice of mollifier}\label{molchoice}	
In order to bound the distribution of the central value of the $L$-functions, we need to show the  primes in different intervals $P_j$ act independently. This means that for $1\le r<R,$ the contribution of the primes $p\le X_r$ to the central value should be viewed as independent of the contribution from the primes in the interval $P_{r+1}.$ We want a mollifier $M_r$ of the contribution of the primes $p\le X_r$ to the central value $L\left(\frac{1}{2},E_d\right).$ Exploiting the idea of independence, this mollifier should factor as a product $M_r=\prod_{j\le r} A_j$, with $A_j$ a mollifier of the contribution of the primes in $P_j$. If the mollification is successful in cancelling this contribution, then $M_r(d)L\left(\frac{1}{2},E_d\right)$ should be influenced only by primes $p>X_r,$ so should be only weakly dependent on the walk on the primes $p\le X_r$.\\
In the $q$-aspect in \cite{AC25} and the $t$-aspect in \cite{ABR20}, the mollifier is given by some truncated inversion of the formal Euler product. Ignoring the finitely many primes of bad reduction for $E$, which will all lie in $P_1$ if $X$ is sufficiently large, this would suggest a truncated mollifier approximating the formal reciprocal:

\begin{equation}
\mathcal{M}_r(d)=\prod_{p\le X_r \text{  good} } 1-a(p)\chi_d(p)p^{-\frac{1}{2}}+\chi_d(p)^2p^{-1}.\end{equation}
 In order to use Markov's inequality, we require a non-negative mollifier. In the $t$-aspect in \cite{AB} and the $q$-aspect in \cite{AC25}, we take even moments of the $L$-function, and hence this is ensured by taking even moments of the mollifier, however Proposition $2$ in \cite{RS2} only concerns first twisted moments.  Although the central value (and hence its reciprocal) is always positive by Waldspurger's Theorem, when we truncate there is no guarantee of the truncation being non-negative, and so we must take an alternative approach to ensure non-negativity.

A further difference in constructing the mollifier comes from observing that the square term $\chi_d(p)^2$ is very predictable; it is $1$ if $p\nmid d$ and $0$ otherwise. Whereas in the non-quadratic scenarios, the effect of the squares of primes had to be incorporated into the series approximating the logarithm, we find the model simplifies in this quadratic case. We would like a simple approximation to $\log  \mathcal{M}_r(d)$, to model as a random walk. If we take the simple partial sum $\mathscr{P}_j(d)$, defined as
\begin{equation}\label{defpj}\mathscr{P}_j(d):=\sum_{p\in P_j} \frac{a(p)\chi_d(p)}{p^{\frac{1}{2}}},\end{equation} then we would have
\begin{equation}\label{fullexp}
\exp\left(-\sum_{j\le r}\mathscr{P}_j(d)\right)=\mathcal{M}_r(d) \exp\left(\sum_{p\le X_r} p^{-1} \left(\chi_d(p)^2-\frac{a(p)^2}{2}\right)+O(p^{-\frac{3}{2}})\right).  
\end{equation} 
The sum over the $O(p^{-\frac{3}{2}})$   term is uniformly bounded, while the 
term associated to the squares, $$\exp\left(\sum_{p\le X_r} p^{-1} \left(\chi_d(p)^2-\frac{a(p)^2}{2}\right)\right),$$ should be close for all different values of $d\in \mathcal{E}(\mathscr{O},a,v).$ If we used this as a mollifier, then we would expect $\exp\left(-\sum_{j\le r} \mathscr{P}_j(d)\right)L\left(\frac{1}{2},E_d\right)$ to be approximately proportional to $ \mathcal{M}_r(d)L\left(\frac{1}{2},E_d\right) $, and hence weakly dependent on the behaviour of the primes $p\le X_r$.
In order to give a short enough Dirichlet polynomial to take twisted moments, we take as the mollifier a truncation of the Taylor expansion for the exponential given in Equation \eqref{fullexp}.
The quadratic twists at primes in the interval $\mathscr{P}_1$, which include any fixed prime for sufficiently large $X$, do not behave like independent Rademacher random variables. As in \cite{RS2}, in order to control their contribution to the central value, we take a long truncation of the exponential, 
\begin{equation}\label{A1}
	\mathscr{A}_1(d)=\sum^{20\lceil \log\log X\rceil}_{r=0} \frac{(-\mathscr{P}_1)^r}{r!}.
\end{equation}
For $j>1$, we cannot take such a long truncation of the exponential in the construction of the mollifier, since the length must be  bounded by a small power of $X$ to take moments. However, as in the $t$-aspect in \cite{AB}, we will see a shorter mollifier suffices as the twists $\chi_d(p)$ behave closer to independent random variables, and thus take
\begin{equation}\label{defaj}
	\mathscr{A}_j(d)=\sum^{(l_j-l_{j+1})^{10^5}}_{r=0} \frac{(-\mathscr{P}_j)^r}{r!}.
\end{equation}
Here, $(l_j-l_{j+1})^{10^5}$ is a large even integer, and so Lemma 1 in \cite{RS2} guarantees that the mollifier $\mathscr{A}_j(d)$ is always positive for $1\le j\le R.$ We then define the mollifier of the primes $p\le X_r$ to be  \begin{equation}\label{Mollchoice}
	M_r(d)=\prod_{j\le r}\mathscr{A}_j(d).
\end{equation}	We will encode information pertaining to bounds on the sums $\mathscr{P}_j(d)$ through \textit{twists}, which will need to be short enough to take twisted mollified moments. Since we expect the walks in disjoint intervals to behave independently, we may assume the twists  split into short factors determined by primes in separate intervals $P_j$,  with coefficients varying depending on $d$. This motivates the following definition of twists, which we will use in Proposition \ref{twistmollprop}, our bound for twisted mollified moments. \begin{df}[Well-factorable twists]\label{Qdef}

Say $Q_d$ is degree $r$ well-factorable if we can write

\begin{equation}\label{Qsplit}
	Q_d(s)=\prod_{1\le j \le r} Q_{d,j}(s),\end{equation}  where \begin{equation}
	Q_{d,j}(s)=\sum_{\substack{p|m \implies p\in P_{j}\\ \Omega_j(m)\le 10(l_j-l_{j+1})^{10^4}}}\frac{\chi_d(m)\gamma(m)}{m^s},
\end{equation}
and $\gamma(m)$ are arbitrary real coefficients.\end{df} Note that any such  well-factorable polynomial of degree $r\le R$ has length at most
\begin{equation}\label{Tbound}
	\prod^{R}_{j=1} X_j^{10(l_j-l_{j+1})^{10^4}}=\exp\left(\sum^{R}_{j=1} 10(l_j-l_{j+1})^{10^4}\log X_{j}\right)\ll X^{\frac{1}{1000}}.
\end{equation}
The mollifier $M_r(d)$  has length at most \begin{equation}
	\prod^r_{j=1} X_j^{(l_j-l_{j+1})^{10^5}}=\prod^r_{j=1}X^{\frac{(l_j-l_{j+1})^{10^5}}{l_j}} \ll X^{\frac{1}{1000}},
\end{equation} so that the twisted mollifier $M_r(d)Q_d\left(\frac{1}{2}\right)^2$ has length $\ll X^{\frac{1}{100}}.$
This enables us to apply Proposition 2 from \cite{RS2} to take the first mollified moment of the central value, multiplied by the well-factorable twist $Q_d,$ with negligible error terms.

The following proposition gives a twisted mollifier formula for the central value of the twist $Q_d(\frac{1}{2})$. For convenience, we write the expansion of the product $Q_d(s)=\prod_{1\le j \le r} Q_{d,j}(s)$ as \begin{equation}\label{defQd}
	Q_d(s)=\sum^N_{\substack{w=1\\p|w\implies p\le X_r}} \frac{C(w) \chi_d(w)}{w^s},
\end{equation}
where the $C(w)$ denote the real coefficients in $Q_d(s),$ and the length satisfies \begin{equation}
	N\ll X^{\frac{1}{1000}}.\end{equation}
From now on, we will drop the implicit dependence of the $L$-function, mollifier and twist  on $d$, and assume we take the central value, so for example write $Q$ for $Q_d	\left(\frac{1}{2}\right).$
\begin{prop}\label{twistmollprop}
	Let $1\le r \le R$, and let the mollifier $M_r$ be as in Equation \eqref{Mollchoice} and the degree $r$ well-factorable twist $Q$ be as in Definition \eqref{Qdef}. Then \begin{equation}\E\left[LM_rQ^2\right]\ll \log ^{-\frac{1}{2}}(X_r)\sum_{\substack{p|q\implies p\le X_r\\ q \text{ square-free}}}\left|\sum_{\substack{p|u_1u_2\implies p\le X_r\\ u_1q,u_2q=\square}}
		\frac{C(u_1)C(u_2)}{(u_1u_2)^{\frac{1}{2}}}\prod_{p|u_1u_2}\left(1-\frac{1}{p}\right)\right|.\end{equation}
\end{prop}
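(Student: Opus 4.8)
The plan is to apply Proposition~2 of \cite{RS2}, which evaluates a twisted mollified first moment $\sum_{d\in\mathcal E(\mathscr O,a,v),\,|d|\le X}L(\tfrac12,E_d)M_r(d)\mathcal A(d)$ for a short Dirichlet polynomial twist $\mathcal A$, and then to rearrange its main term into the displayed form; here we take $\mathcal A=Q^2$.

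First I would check the hypotheses. By Lemma~1 of \cite{RS2} the mollifier $M_r=\prod_{j\le r}\mathscr A_j$ is a non-negative Dirichlet polynomial supported on primes $p\le X_r$, and expanding $Q^2=\sum_{u_1,u_2}C(u_1)C(u_2)\chi_d(u_1u_2)(u_1u_2)^{-1/2}$ exhibits $Q^2$ as a Dirichlet polynomial with real coefficients supported on primes $p\le X_r$; the length estimates recorded after \eqref{Tbound} give $M_rQ^2$ of length $\ll X^{1/100}$, comfortably inside the admissible range, while the well-factorable shape of $Q$ from Definition~\ref{Qdef} is exactly what keeps the error terms in \cite{RS2} negligible.

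Next I would insert the approximate functional equation $L(\tfrac12,E_d)=2\sum_n a(n)\chi_d(n)n^{-1/2}V\!\bigl(2\pi n/(\sqrt N|d|)\bigr)$ and expand $M_rQ^2$. Averaging $\chi_d(\cdot)$ over $d\in\mathcal E(\mathscr O,a,v)$ keeps, up to the admissible error, only the diagonal terms in which the combined $\chi_d$-argument is a perfect square, since the restricted character sum $\sum^{\flat}_{|d|\le X}\chi_d(k)$ has main term $\asymp \tfrac{X}{vN_0}\prod_{p\mid k}(1-\tfrac1p)$ when $k=\square$ — where $\chi_d(k)=\mathbf 1[(d,\sqrt k)=1]$ — and is negligible otherwise, and this main term cancels the normalisation $\asymp X/(vN_0)$ of $\E$. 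The mollifier coefficients then combine with the $p\le X_r$ Euler factors of $L(\tfrac12,E_d)$ (together with the square-term adjustment recorded in \eqref{fullexp}) to contribute the scalar $\log^{-1/2}(X_r)$, leaving a diagonal sum purely in $Q$'s coefficients $C$. Writing the common square-free part of $u_1$ and $u_2$ as $q$ — that is, imposing $u_1q=\square$ and $u_2q=\square$, and using that $u_1u_2$ and $\sqrt{u_1u_2}$ have the same prime support when $u_1u_2$ is a square — recasts the main term as $\log^{-1/2}(X_r)\sum_{q\text{ sq-free}}\sum_{u_1q=\square,\,u_2q=\square}\frac{C(u_1)C(u_2)}{(u_1u_2)^{1/2}}\prod_{p\mid u_1u_2}(1-\tfrac1p)$, all variables being supported on primes $\le X_r$; the triangle inequality over $q$, absorbing the negligible error from \cite{RS2}, then yields the stated bound.

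The main obstacle is combinatorial bookkeeping rather than analysis: one must track how the square-free parts interact when $a(n)$, the mollifier coefficients and the two copies of $C$ are multiplied together, check that the local weights $\prod_{p\mid u_1u_2}(1-\tfrac1p)$ produced by the coprimality constraints $\chi_d(k)^2=\mathbf 1[(d,k)=1]$ come out exactly as written, and confirm that the mollifier contributes precisely $\log^{-1/2}(X_r)$ with no stray power of $\log X$. The genuinely hard analytic input — Poisson summation in $d$, control of the cutoff $V$, and the bound on the non-square terms — is already packaged inside Proposition~2 of \cite{RS2}, so beyond invoking it the remaining argument is a finite, if delicate, rearrangement.
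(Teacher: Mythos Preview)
There is a genuine gap. You treat the sentence ``the mollifier coefficients then combine with the $p\le X_r$ Euler factors of $L(\tfrac12,E_d)$ \dots\ to contribute the scalar $\log^{-1/2}(X_r)$, leaving a diagonal sum purely in $Q$'s coefficients $C$'' as routine bookkeeping, but this is in fact the entire content of the proposition, and it does \emph{not} follow by rearrangement.

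Two specific points. First, each factor $\mathscr A_j$ of $M_r$ is a \emph{truncated} Taylor polynomial of $\exp(-\mathscr P_j)$, not the full exponential. The clean Euler-product cancellation against the $L$-function that produces $\log^{-1/2}(X_r)$ only holds for the untruncated mollifier; to get there one must lift the truncation via Rankin's trick (the paper's Lemma~\ref{Chernprop}) and control the resulting error. Second, and more seriously, that error must be bounded by the right-hand side of the proposition --- a sum in which the absolute value sits \emph{outside} $\sum_{u_1q,u_2q=\square}C(u_1)C(u_2)(\cdots)$. These inner sums can exhibit genuine cancellation (the paper's remark after Lemma~\ref{Chernprop} gives the example $Q=\chi_d(p^2)-\chi_d(q^2)$), so a crude estimate of the Rankin error by $\sum|C(u_1)||C(u_2)|$ is too weak. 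The paper resolves this by interpreting, for each pair of square-free parts $(q_1,q_2)$, the relevant sums as quadratic forms in the coefficients $\gamma$, proving the diagonal form ($q_1=q_2$) is positive semidefinite, and showing the off-diagonal and Rankin-error forms are dominated by it (Lemmata~\ref{switchcher} and~\ref{freebound}, via the tensor-product bounds in Appendix~\ref{Sec: QF}). This is what permits the triangle inequality over $q$ alone.

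Your claim that the hard input is ``already packaged inside Proposition~2 of \cite{RS2}'' is therefore not right: that proposition supplies only the individual moments $S(X;n,v)$. The well-factorable structure of $Q$ is not merely there to keep error terms small --- it is what allows the main term to split as $\prod_j\mathscr N_j$ so that Rankin and the quadratic-form domination can be run interval by interval. None of this machinery appears in your sketch.
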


We remark that the terms $u_1q=u_2q=\square$, where we have borrowed the notation from \cite{RS2} to denote those pairs where $u_1$ and $u_2$ have the same square-free part, $q$, are the diagonal terms. 	Unlike the $q$-aspect, the diagonal terms are not just those cross-terms with the same twist; in the context of quadratic characters, the cross-terms whose twists have the same square-free part. In order to get an accurate bound on cancellation in the diagonal terms, we must use the triangle inequality only when summing over the square-free parts, not internally within the diagonal terms. Extra care must be taken to preserve this cancellation throughout the estimates. 
 We postpone the proof of Proposition \ref{twistmollprop} to Section \ref{SecTM}, and proceed with  the proof of Theorem \ref{largedevthm} in Section \ref{SecLD}. The proofs in Section \ref{SecLD} require bounds on the moments of Dirichlet polynomials averaged over quadratic twists provided in Appendix \ref{Sec: moments}, while to simplify the calculations for the proof of Propositions \ref{twistmollprop}, we appeal to lemmata on quadratic forms proven in Appendix \ref{Sec: QF}.
 \subsection{Notation}
 In order to use probabilistic tools such as the barrier method, we define a sample space to be the uniform distribution over members of $\mathcal{E}(\mathscr{O},a,v)$ with magnitude at most $X$, then take probabilities and expectations with regards to this sample space. Thus we may express the size of the set of twists with large deviations in Equation \eqref{eqlargedev} in terms of probabilities with regards to the counting measure.

\subsection{Acknowledgements} The author would like to thank Louis-Pierre Arguin for suggesting the problem, edits to the paper  and for many helpful discussions throughout the project, and his supervisor Jon Keating for his corrections to an earlier draft and guidance.  The author is supported by the EPRSC grant EP/W524311/1.  
\section{Proof of Theorem \ref{largedevthm}}\label{SecLD}The bound we have to prove for Equation \eqref{eqlargedev} may be expressed as saying that for $X$ sufficiently large, we have
\begin{equation}\label{countbig}
	\P\left(  \log\left(L\left(\frac{1}{2}, E_d\right)\right)\ge V-\frac{\log\log X}{2}\right)\ll \frac{e^{-\frac{V^2}{2\log\log X}}}{\alpha\sqrt{\log\log X}},
\end{equation} 
where the implicit constant is uniform for $\alpha$ in $(0,B)$ for any $B<\frac{1}{2}.$

Assuming Proposition \ref{twistmollprop}, we may proceed with the proof of the above equation. Our method follows the recursive scheme in \cite{AB} and \cite{AC25} used to view the logarithm as a walk on the primes, but simplifies because the Dirichlet series are real, so there is no need to consider the imaginary part. Moreover, since the twists are quadratic, we no longer need to incorporate the effects of the squares of primes into the partial sums. 
We divide the set of primes into intervals $P_r$ and then define the partial sums at each time step:

\begin{equation} \label{subwalk} S_{r}=\sum_{p< X_r} \frac{\chi_d(p)a(p)}{p^{\frac{1}{2}}},\end{equation}
for $0\le r\le R.$ 
Note that we can partition each partial sum into the contributions from each interval $P_j$ for $j\le r$, so that \begin{equation}
	S_r=\sum_{j\le r}\mathscr{P}_j,
\end{equation} where $\mathscr{P}_j$ is the increment over the interval $P_j$ defined in Equation \eqref{defpj}. We view $S_r$ as a random walk, by thinking of the discriminant $d$ as a random variable. We then have to find the probability of a large central value, given by the event \begin{equation}
	H:=\left\{\log L\left(\frac{1}{2},E_d\right)\ge V-\frac{\log\log |d|}{2}\right\}.
\end{equation} 

For convenience, we define the approximate  variance of the walk $S_r:$ \begin{equation}
	n_r:=\log\log \left(\max\{X_r,e\}\right).
\end{equation}  Inspired by the typical behaviour of random walks, the method involves constructing a barrier at each time step $\log \log X_r$ which the sub-walks $S_{r}$ for members of $H$ should typically lie in. 
The average height $S_r$  at time $n_r$  in $H$  is modelled by linear growth  at rate $\kappa,$ where \begin{equation}\label{kappa}
	\kappa=\frac{V}{\log\log X}.
\end{equation}
Observe that by the definition of $V$, we have $\kappa \to \alpha$ as $X\to\infty$. For $1\le r\le R,$ we define  lower and upper barriers $L_r$ and  $U_r$ for the random walk, and show that, conditional on $H,$ there is a very small probability of the walk being outside the barrier at time $n_r$. The logarithm of the central value is typically influenced by primes up to about $X$, and so the variance of the section of the logarithm influenced by primes not mollified by $M_r$ is approximately \begin{equation}
	\sigma^2_r:=\log\log X-n_r.
\end{equation} 
By the construction of $l_r,$ we see that $\sigma^2_r=\log l_r.$ To use the barrier method, we set the lower and upper barriers at time $n_r$ to be \begin{equation} L_r=\kappa n_r-\mathbf{s}\sigma^2_{r},\quad
U_r=\kappa n_r+\mathbf{s} \sigma^2_r,\end{equation} with $\mathbf{s}$ as defined in Equation \eqref{s} and $\kappa$ as defined in Equation \eqref{kappa}.

Note that this is where we expect the random walk $S_{n}$ to lie; if we consider the full truncated logarithm for $\log L(\frac{1}{2},E_d)^{-1},$ which ignoring the finitely many primes of bad reduction would look like $$\sum_{k\ge 1}\sum_{p< X_n}\frac{\chi_d(p^k)a(p)^k}{kp^{\frac{k}{2}}},$$ then we would expect this to be lower than $S_r$  by approximately $\frac{n_r}{2}$ at time $n_r$. This is why the distribution of $\log L(\frac{1}{2},E_d)$ is off-centred, with conjectured expectation $-\frac{\log\log X}{2}.$

Due to the simplifications of having real characters and excluding the effects of squares of primes from the random walk, we are able to take a much simpler recursive scheme than in \cite{AB24} or \cite{AC25}. \\
We take $A_0=B_0=\{d\in \mathcal{E}(\kappa,a,v): |d|\le X\}$ to be the full sample space, and for $1\le r\le R$ define the events of staying within the barrier:
\begin{equation}
	A_r=A_{r-1}\cap\{S_r<U_r \},\quad B_r=B_{r-1}\cap\{S_r>L_r\}.
\end{equation} We put $G_r=A_r\cap B_r$ for $0\le r \le R$, which is the event that $S_j$ lies between $L_j$ and $U_j$ for all $1\le j\le r.$ 

We decompose the large deviation event $H$ to express the probability as
\begin{equation}\label{decompH}
	\P(H)= \sum^{R-1}_{r=0}\P(H\cap G_r\cap G_{r+1}^c)+\P(H\cap G_{R}).
\end{equation}
We prove the following Proposition, which allows us to decompose walks with a large central value satisfying $H$ into walks with abnormal intermediary values, and then show each is improbable.
\begin{prop}\label{barrierprob}

\renewcommand{\theenumi}{\roman{enumi}}
 Let $B<\frac{1}{2}$. Then there exists $\delta>0$ such that for all $\alpha\in (0,B)$, with the event $H$ and parameter $V$ defined above,  we have for $0\le r\le R-1:$ \begin{enumerate}
	\item\begin{equation}\label{midprob}
		\P(H\cap G_r\cap G_{r+1}^c)\ll \frac{e^{-\frac{V^2}{\log\log X}}}{\alpha\sqrt{\log\log X}}e^{-\delta\kappa\mathbf{s}\sigma^2_{r+1}}.
	\end{equation}
	Moreover,
	\item \begin{equation}\label{endprob}
		\P(H\cap G_R)\ll \frac{e^{-\frac{V^2}{\log\log X}}}{\alpha\sqrt{\log\log X}}.
	\end{equation}
\end{enumerate}
\end{prop}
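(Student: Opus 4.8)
The plan is to prove both parts of Proposition~\ref{barrierprob} by the standard barrier/Markov scheme: on each event we multiply the indicator of the relevant deviation event by a well-factorable twist $Q_d$ chosen so that $Q_d(\tfrac12)^2 \ge 1$ on that event, apply Markov's inequality, and then invoke Proposition~\ref{twistmollprop} to reduce the twisted mollified moment to a sum over square-free parts which we estimate using the quadratic-form lemmata of Appendix~\ref{Sec: QF}. Throughout, the key heuristic is that the mollifier $M_r$ kills the contribution of primes $p\le X_r$ so that $LM_r$ behaves like a mean-one random variable, while the increments $\mathscr{P}_j$ for disjoint intervals $P_j$ are nearly independent Gaussians with variance $n_j-n_{j-1}$.

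For part (i), on the event $H\cap G_r\cap G_{r+1}^c$ the walk $S_{r+1}$ has exited the barrier at time $n_{r+1}$, i.e. $S_{r+1}\ge U_{r+1}=\kappa n_{r+1}+\mathbf{s}\sigma^2_{r+1}$ or $S_{r+1}\le L_{r+1}=\kappa n_{r+1}-\mathbf{s}\sigma^2_{r+1}$, while $H$ itself, after inserting the mollifier and recalling $\log(LM_r)$ is controlled by primes $p>X_r$, forces the tail of the walk beyond time $n_{r+1}$ to make up the remaining height $V-\frac{\log\log X}{2}-\log L(\tfrac12,E_d)$; since the remaining variance is $\sigma^2_{r+1}$, this costs a Gaussian factor. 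Concretely I would: (1) bound $\one_H \le e^{-\theta(V-\frac{\log\log X}{2})} (LM_{r+1})^{\theta}\cdots$ — but here, because only first twisted moments are available, instead follow \cite{AB}/\cite{AC25} and bound $\one_H \le L\cdot M_{r+1}\cdot (\text{twist})$ with a single power of $L$, picking the twist $Q_d = \prod_{j} Q_{d,j}$ to have $Q_{d,j}(\tfrac12)\approx \exp(\lambda_j \mathscr{P}_j)$ truncated, so that $Q_d(\tfrac12)^2 \gg e^{-\frac{V^2}{\log\log X}} e^{-\delta\kappa\mathbf{s}\sigma^2_{r+1}}$ times the indicator of the barrier-exit-plus-$H$ event; (2) apply Markov and Proposition~\ref{twistmollprop} to get $\E[LM_rQ^2]\ll \log^{-1/2}(X_r)\sum_q |\cdots|$; (3) evaluate the diagonal sum over square-free parts $q$, using the factorization of $Q$ into pieces over the $P_j$ and the near-multiplicativity, which turns it into a product over intervals of one-dimensional Gaussian integrals, reproducing $e^{-\frac{V^2}{\log\log X}}/(\alpha\sqrt{\log\log X})$ from the intervals up to $X_r$ plus the extra decay $e^{-\delta\kappa\mathbf{s}\sigma^2_{r+1}}$ from the forced large increment in $P_{r+1}$. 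The $\log^{-1/2}(X_r)$ prefactor is exactly what cancels the (super-unit-normalized) mass $e^{n_r}\asymp \log X_r$ coming from summing the Gaussian weights over the full range of $S_r$, leaving the advertised bound; the $\log\alpha$ in the definition of $R$ guarantees the spacing $\sigma^2_{r+1}$ is large enough that $\sum_r e^{-\delta\kappa\mathbf{s}\sigma^2_{r+1}}$ converges uniformly, which is why summing \eqref{midprob} over $r$ in \eqref{decompH} is harmless.

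For part (ii), on $H\cap G_R$ the walk has stayed inside the barrier all the way to the final time $n_R = \log\log X_R$, which is within $O(\log(10^5-\log\alpha))$ of $\log\log X$; here there is no forced large increment, so we only pick up the Gaussian cost of $H$ itself, $e^{-\frac{V^2}{\log\log X}}$. I would again bound $\one_{H\cap G_R}\le LM_R Q^2$ with the twist encoding $S_R\approx \kappa\log\log X$ (i.e.\ $Q_{d,j}\approx\exp(\kappa\mathscr{P}_j)$ truncated), apply Markov and Proposition~\ref{twistmollprop}, and evaluate the resulting diagonal sum as a telescoping product of Gaussian integrals over $P_1,\dots,P_R$. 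Because $M_R$ mollifies essentially all primes up to a small power of $X$, the leftover $L$ only sees the very top primes and contributes $O(1)$; the product over intervals gives $\prod_j \exp(\kappa^2(n_j-n_{j-1})/2 - \kappa\cdot(\text{height increment}))$, which after the barrier constraint and the $\log^{-1/2}(X_R)$ prefactor collapses to $e^{-\kappa^2\log\log X/2}\big/(\kappa\sqrt{\log\log X}) = e^{-V^2/(2\log\log X)}\cdot e^{-V^2/(2\log\log X)}/\cdots$ — more precisely to $e^{-\frac{V^2}{\log\log X}}/(\alpha\sqrt{\log\log X})$ as claimed, using $\kappa\to\alpha$.

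The main obstacle, as flagged in the text after Proposition~\ref{twistmollprop}, will be step (3) in both parts: correctly tracking the cancellation in the \emph{diagonal} terms $u_1q=u_2q=\square$. Unlike the $q$-aspect, these are not merely ``same twist'' terms, and one must resist applying the triangle inequality inside a fixed square-free class $q$ — only the outer sum over $q$ may be bounded absolutely. Executing this means (a) using the well-factorable structure $Q_d=\prod_j Q_{d,j}$ with $\Omega_j(m)\le 10(l_j-l_{j+1})^{10^4}$ to factor the inner double sum over $u_1,u_2$ as a product over intervals $P_j$ of local sums, (b) recognizing each local sum as (a truncation of) $\exp(\lambda_j^2\sum_{p\in P_j}a(p)^2/p)$ up to the square-free-part weight $\prod_{p\mid u_1u_2}(1-1/p)$ and error terms controlled by the moment bounds of Appendix~\ref{Sec: moments}, and (c) checking that the truncation lengths $(l_j-l_{j+1})^{10^5}$ and $10(l_j-l_{j+1})^{10^4}$ are long enough that the tails of these exponentials are negligible on the relevant deviation scale — this is precisely where the choice $\mathbf{s}=10^5/(1-2\alpha)$ in \eqref{s} enters, since a walk constrained to slope $\kappa<\tfrac12$ over a window of variance $\sigma^2_{r+1}$ needs the truncation parameter to beat $(\tfrac12-\alpha)^{-1}$. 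I expect parts (i) and (ii) to run in parallel, with (i) slightly harder only because of the extra interval $P_{r+1}$ carrying the forced deviation.
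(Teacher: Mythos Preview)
Your proposal misses the key asymmetry in part (i). The paper splits
\[
\P(H\cap G_r\cap G_{r+1}^c)\;\le\;\P(G_r\cap A_{r+1}^c)\;+\;\P(H\cap G_r\cap B_{r+1}^c),
\]
and treats the two pieces by entirely different mechanisms. Going \emph{above} the upper barrier ($A_{r+1}^c$) is handled without using $H$ or the $L$-function at all: on $G_r\cap A_{r+1}^c$ the increment $S_{r+1}-S_r$ must exceed $U_{r+1}-u$ for some $u\in[L_r,U_r]$, so Markov on $|S_{r+1}-S_r|^{2k_r}$ (Lemma~\ref{walkmom}) together with the Gaussian localisation of $S_r$ (Equation~\eqref{2.4} of Lemma~\ref{pointmom}) suffices. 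Going \emph{below} the lower barrier ($B_{r+1}^c$) is where $H$ enters in the way you describe: since $S_{r+1}<L_{r+1}$, the pointwise lower bound $\mathscr{A}_i\ge e^{-\mathscr{P}_i}(1+O(e^{-l_i}))$ on $A_i\cap B_{i-1}$ (Lemma~\ref{Mollbig}, which you do not mention and which is the crux of this half) makes the mollifier $M_{r+1}\gtrsim e^{-S_{r+1}}$ \emph{large}; combined with $L\ge e^V/\sqrt{\log X}$ on $H$, the product $LM_{r+1}$ is forced to be large, and Markov on $LM_{r+1}$ via Equation~\eqref{2.7} gives the bound after a Gaussian sum over $(u,v)$.

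Your unified ``$\one_H\le L\cdot M_{r+1}\cdot(\text{twist})$'' scheme fails on the above-barrier side: when $S_{r+1}>U_{r+1}$ the mollifier $M_{r+1}\approx e^{-S_{r+1}}$ is \emph{small}, so $LM_{r+1}$ is not forced large and the first-moment Markov bound is vacuous. No twist $Q_{d,r+1}\approx e^{\lambda\mathscr{P}_{r+1}}$ rescues this through a first moment of $LM_{r+1}Q^2$, because Proposition~\ref{twistmollprop} only controls the size of that moment, not a lower bound for it on the event. You must drop $L$ entirely there and use pure walk moments. This dichotomy is also why the constraint $\mathbf{s}=10^5/(1-2\alpha)$ is invoked only in the below-barrier computation (Equation~\eqref{bound3}); the above-barrier bound \eqref{PnotA} needs only $\mathbf{s}>\alpha$.

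For part (ii) your plan would likely work but is unnecessary. The paper simply discards $H$, bounds $\P(H\cap G_R)\le\P(G_R)$, and estimates $\P(G_R)$ by localising $S_R\in[L_R,U_R]$ via Equation~\eqref{2.4} with $Q=1$. Since $\sigma^2_R=\log\log X-n_R=O\bigl(\tfrac{1}{\alpha}+\tfrac{1}{1-2\alpha}\bigr)$ is bounded, $L_R$ is within $O(1)$ of $V$ and the Gaussian tail at $L_R$ already gives the claimed bound; no mollified $L$-moment and no twist are needed at the final step.
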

We recall $\log_{R+2}(x)>10^5-\log \alpha,$ then substitute Equations \eqref{midprob} and \eqref{endprob} into Equation \eqref{decompH} to show $\P(H)\ll \frac{e^{-\frac{V^2}{\log\log X}}}{\alpha\sqrt{\log\log X}}.$
 \subsection{Proof of Proposition \ref{barrierprob} (i)}
 In this section we will show there is a small probability of the walk leaving the barrier time $X_r$, conditional on staying in the barrier until time $X_{r-1}$. Being able to condition on staying in the barrier until time $X_{r-1}$ gives a shorter walk to consider, and ultimately a shorter twist, which enables us to control the walk up to a small power of $X.$ For convenience, we will take $S_{0}=X_0=0.$ Unlike in \cite{AB24} and \cite{AC25}, this avoids having to treat the event that the first barrier is breached, $P(H\cap G_1^c),$ as a special case.
Then we may write \begin{equation}\label{decompmid}\P(H\cap G_r\cap G_{r+1}^c)\le \P(G_{r}\cap A_{r+1}^c)+\P(H\cap G_r\cap B_{r+1}^c),\end{equation} and bound the probability of both events.

We first bound $\P(G_{r}\cap A_{r+1}^c)$ using Markov's inequality. To break the barrier at time $n_{r+1}$ conditional on $G_{r}$, we must have $S_{{r+1}}>U_{r+1},$ and so we see  we see that for any $k_r>0$: \begin{equation}\label{AMarkov}
	\P(G_{r}\cap A_{r+1}^c)\ll \sum_{u\in [L_r,U_r]} \E\left[\frac{\left|S_{{r+1}}-S_{r}\right|^{2k_r}}{(U_{r+1}-u)^{2k_r}}\mathbf{1}\left(G_r\cap \{S_{r}\in [u,u+1]\}\right) \right].
\end{equation}  
If we set $k_r=\left\lfloor\frac{ (U_{r+1}-u)^2 }{2\left(\sum_{X_{r}\le p\le X_{r+1}}\frac{a(p)^2}{p}\right)}\right\rfloor,$ then using Lemma \ref{walkmom} and Equation \ref{2.4} from Lemma \ref{pointmom}, we see this is \begin{equation}
\ll 	\sum_{u\in [L_r,U_r]}\frac{e^{-\frac{u^2}{2n_r}}}{\sqrt{n_r}} \frac{\frac{(2k_r)!}{2^{k_r}(k_r)!}\left(\sum_{X_{r}\le p\le X_{r+1}}\frac{a(p)^2}{p}\right)^{k_r}}{(U_{r+1}-u)^{2k_r}}.  \end{equation}
By Stirling's formula, this is

\begin{align}
	&\ll \sum_{u\in [L_r,U_r]}\frac{e^{-\frac{u^2}{2n_r}}}{\sqrt{n_r}} \left(\frac{2k_r}{e}\right)^{k_r}\frac{\left(\sum_{X_{r}\le p\le X_{r+1}}\frac{a(p)^2}{p}\right)^{k_r}}{(U_{r+1}-u)^{2k_r}} \\
	&\ll \sum_{u\in [L_r,U_r]}\frac{e^{-\frac{u^2}{2n_r}}}{\sqrt{n_r}}e^{-k_r}\\
	&\ll \sum_{u\in [L_r,U_r]}\frac{e^{-\frac{u^2}{2n_r}}}{\sqrt{n_r}}e^{-\frac{ (U_{r+1}-u)^2 }{2\left(\sum_{X_{r}\le p\le X_{r+1}}\frac{a(p)^2}{p}\right)}}.
\end{align}
Comparing this to the law for a sum of two independent Gaussian variables $Z_1\sim N(0,n_r)$ and $Z_2\sim N(0,n_{r+1}-n_r+o_{r\to\infty}(1)),$ the above expression may be bounded as

$\ll \P(Z_1+Z_2\ge U_{r+1})\ll \frac{ e^{-\frac{U_{r+1}^2}{2n_{r+1}}}}{\alpha\sqrt{ n_{r+1}}}.$ 

Since $U_{r+1}=\kappa n_{r+1}+\mathbf{s}\sigma^2_{r+1},$ this is
\begin{align}
	\nonumber&\ll \frac{e^{-\frac{\kappa^2 n_{r+1}}{2}-\kappa \mathbf{s} \sigma^2_{r+1}}}{\sqrt{\log\log X}}  \\
	&\ll \frac{e^{-\frac{\kappa^2 \log\log X}{2}}}{\alpha\sqrt{\log\log X}} e^{(\frac{\kappa^2}{2}-\kappa \mathbf{s})\sigma^2_{r+1}}. 
\end{align}
Since $\mathbf{s}>1>\alpha$ we see this is 
\begin{equation}\label{bound4}\ll \frac{e^{-\frac{\kappa^2 \log\log X}{2}}}{\alpha\sqrt{\log\log X}} e^{-\frac{\kappa\mathbf{s}}{2}\sigma^2_{r+1}},\end{equation}
for all sufficiently large $X$, and hence 

\begin{equation}\label{PnotA}
	\P(G_{r}\cap A_{r+1}^c)\ll   \frac{e^{-\frac{\kappa^2 \log\log X}{2}}}{\alpha\sqrt{\log\log X}} e^{-\frac{\kappa\mathbf{s}}{2}\sigma^2_{r+1}}, 
\end{equation}
as required.

We now turn to bound $ \P(H\cap G_r\cap B_{r+1}^c).$ 
The event $B_{r+1}^c$ concerns the random walk $S_{{r+1}}=\sum^{r+1}_{j=1}\mathscr{P}_{j}$, which is closely correlated to the mollifier $$M_{r+1}=\prod_{j\le r+1}\mathscr{A}_j.$$ We want to connect these quantities, to show that on the event $B_{r+1}^c$, $M_{r+1}$ is abnormally large. Since we expect the mollifier to be approximately inversely proportional to the central value, this should mean that large deviation event $H$ is rare, which we show using Markov's inequality. The following lemma connects  $\mathscr{P}_{{i}}$ to $\mathscr{A}_{i}$ for $0\le i \le R-1,$ which we combine to get bounds on $M_{r+1}.$
\newpage
\begin{lem}\label{Mollbig}
	Let $1\le i\le R, $ and suppose $d\in A_{i}\cap B_{i-1}.$ If $\mathscr{P}_{i}\le 0,$ we have 
	\begin{equation}\label{trivbound}
		\mathscr{A}_i\ge 1, 
	\end{equation}
while if $\mathscr{P}_{i}> 0,$ then
	\begin{equation}\label{inB}
	\mathscr{A}_{i}\ge	e^{-\mathscr{P}_{i}} \left(1+O\left(e^{-{l_{i}}}\right)\right).
	\end{equation} 
\end{lem}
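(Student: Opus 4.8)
The plan is to prove the two estimates separately, both resting on the fact that $\mathscr{A}_i$ is a truncation of the exponential series for $e^{-\mathscr{P}_i}$, together with an estimate showing $\mathscr{P}_i$ is not too large in absolute value when $d\in A_i\cap B_{i-1}$. Recall from Equation \eqref{defaj} that
\[
\mathscr{A}_i=\sum_{r=0}^{(l_i-l_{i+1})^{10^5}}\frac{(-\mathscr{P}_i)^r}{r!},
\]
and that by Lemma 1 of \cite{RS2} this is always positive since the truncation length is a large even integer. The key observation is that on the event $A_i\cap B_{i-1}$ we have good two-sided control on $S_i$ and $S_{i-1}$: since $d\in B_{i-1}$ we have $S_{i-1}>L_{i-1}=\kappa n_{i-1}-\mathbf{s}\sigma^2_{i-1}$, and since $d\in A_i$ we have $S_i<U_i=\kappa n_i+\mathbf{s}\sigma^2_i$, so that
\[
\mathscr{P}_i=S_i-S_{i-1}<\kappa(n_i-n_{i-1})+\mathbf{s}(\sigma^2_i+\sigma^2_{i-1})=\kappa(n_i-n_{i-1})+\mathbf{s}(\log l_i+\log l_{i-1}),
\]
which, given the rapid growth of the $l_j$ and the fact that $n_i-n_{i-1}$ is itself controlled by $\log l_{i-1}-\log l_i$ type quantities (the step sizes), is $o(l_i)$ — in particular much smaller than the truncation length $(l_i-l_{i+1})^{10^5}$. (One should double check that the analogous crude lower bound $\mathscr{P}_i\ge S_i - S_{i-1}$, when combined with $d\in A_i$ giving an \emph{upper} barrier and $d\in B_{i-1}$ giving a \emph{lower} barrier, still yields $|\mathscr{P}_i|=o(l_i)$; if only one-sided barrier information is available on one side, one instead uses the a priori bound $|\mathscr{P}_i|\ll \sqrt{\log l_{i-1}}\log\log X$ coming from the pointwise moment bounds in Appendix \ref{Sec: moments}, which is still $o(l_i)$.)

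For the first estimate \eqref{trivbound}: if $\mathscr{P}_i\le 0$, then $-\mathscr{P}_i\ge 0$, so every term $\frac{(-\mathscr{P}_i)^r}{r!}$ in the sum defining $\mathscr{A}_i$ is non-negative; in particular $\mathscr{A}_i$ is at least its $r=0$ term, which is $1$. This is immediate and requires no growth bound.

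For the second estimate \eqref{inB}: when $\mathscr{P}_i>0$ the terms alternate in sign, so the truncation is no longer a trivial underestimate. The plan is to write $\mathscr{A}_i=e^{-\mathscr{P}_i}-R_i$ where $R_i=\sum_{r>M}\frac{(-\mathscr{P}_i)^r}{r!}$ is the tail, with $M=(l_i-l_{i+1})^{10^5}$. Using the standard tail bound for the exponential series — valid once $M$ is comfortably larger than $e\mathscr{P}_i$ — one has $|R_i|\le \frac{\mathscr{P}_i^{M+1}}{(M+1)!}\cdot\frac{1}{1-\mathscr{P}_i/(M+2)}\ll \bigl(\frac{e\mathscr{P}_i}{M+1}\bigr)^{M+1}$ by Stirling. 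Since $\mathscr{P}_i=o(l_i)$ while $M=(l_i-l_{i+1})^{10^5}$ dwarfs any power of $l_i$, the ratio $\frac{e\mathscr{P}_i}{M+1}$ is smaller than, say, $\frac12$, and raising it to the power $M+1$ gives something far smaller than $e^{-l_i}$ — indeed the bound is of the shape $e^{-cM}$ for an absolute $c$. Therefore $\mathscr{A}_i=e^{-\mathscr{P}_i}\bigl(1-e^{\mathscr{P}_i}R_i\bigr)$, and since $e^{\mathscr{P}_i}=e^{o(l_i)}$ we still have $e^{\mathscr{P}_i}|R_i|\ll e^{-l_i}$, giving $\mathscr{A}_i\ge e^{-\mathscr{P}_i}(1+O(e^{-l_i}))$ as claimed.

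The main obstacle is the bookkeeping in the previous paragraph: one must verify that $\mathscr{P}_i=o(l_i)$ (or at worst $O(l_i^{1-c})$) on the event $A_i\cap B_{i-1}$ uniformly in $i$ and in $\alpha\in(0,B)$, and that this is enough slack against the truncation length $M=(l_i-l_{i+1})^{10^5}$ for the tail bound to beat $e^{-l_i}$. This amounts to unwinding the definitions of $l_j$, $n_r$, $\sigma^2_r$ and the barriers $L_r,U_r$ and checking the elementary inequalities; the factor $10^5$ in the exponent of the truncation length is precisely the margin that makes this comfortable, so no delicate optimization is needed — only care that the implied constants do not blow up as $\alpha\to 0$ (handled by the $-\log\alpha$ term built into the choice of $R$ and the spacing of the time steps).
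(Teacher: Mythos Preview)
Your proof is correct and takes essentially the same approach as the paper: bound $\mathscr{P}_i$ above by $U_i-L_{i-1}$ on $A_i\cap B_{i-1}$, then show the tail of the exponential series past the truncation length $M=(l_i-l_{i+1})^{10^5}$ is $O(e^{-l_i})\cdot e^{-\mathscr{P}_i}$. The only cosmetic difference is that the paper controls the tail via Rankin's trick (multiplying term $r$ by $e^{\rho(r-M)}\ge 1$ and summing, with $\rho=1000$) rather than your direct Stirling estimate $(e\mathscr{P}_i/M)^{M+1}$; and your parenthetical worry about a lower bound on $\mathscr{P}_i$ is unnecessary, since in the case $\mathscr{P}_i>0$ only the upper bound is needed, which $A_i\cap B_{i-1}$ provides directly.
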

\begin{proof}[Proof of Lemma \ref{Mollbig}]
	Equation \eqref{trivbound} is clear, since all the summands in Equation \eqref{defaj} are non-negative if $\mathscr{P}_{i}\le 0.$  We turn our attention to Equation \eqref{inB}, where $\mathscr{P}_i>0$. If the power series for $-\mathscr{P}_{i}$ weren't truncated to form $\mathscr{A}_{i}$, we would have $e^{-\mathscr{P}_{i}}$.  Rankin's trick yields the pointwise bound for any $\rho>0:$
	\begin{equation}\label{truncrank}
		\mathscr{A}_{i}=\exp(-\mathscr{P}_{i})+O\left(\exp\left(-\rho (l_{i-1}-l_i)^{10^5}\right)\sum^\infty_{j=0} \frac{e^\rho \mathscr{P}_{i}^j}{j!}\right)
		\end{equation}
		The sum may be expressed as $\exp\left(e^\rho \mathscr{P}_{i}\right)$. But since $d\in A_{i}\cap B_{i-1}$, we know the jump $\mathscr{P}_{i}$ in the interval $P_{i}$ cannot be large. Indeed, we  have \begin{align}\nonumber\mathscr{P}_{i}&= S_{i}-S_{{i-1}}\\&\le U_{i}-L_{i-1}\nonumber\\\nonumber&\le \kappa\left(n_{i+1}-n_{i}\right)+2\mathbf{s}\sigma^2_{i}\\&\le\kappa\left( \log l_{i}-\log l_{i+1}\right)+2\mathbf{s}\sigma^2_{i},\end{align}  
			 and hence 		
		\begin{equation}
			\exp\left(e^\rho \mathscr{P}_{i}\right)\le \exp((e^\rho+1)(\kappa\left( \log l_{i}-\log l_{i+1}\right)+2\mathbf{s}\sigma^2_{i}))\exp(-\mathscr{P}_{i}). 
		\end{equation} 
		Then Equation \eqref{truncrank} yields
		\begin{equation}\mathscr{A}_{i+1}=\exp(-\mathscr{P}_{i+1}) \left(1+O\left(\left(\exp((e^\rho+1)(\kappa\left( \log l_{i}-\log l_{i+1}\right)+2\mathbf{s}\sigma^2_{i})-\rho (l_{i}-l_{i+1})^{10^5})\right)\right)\right).\end{equation}
	We have $l_{i+1}-l_i>\mathbf{s}10^5$, and $l_{i+2}\le 2\mathbf{s}\log l_{i+1}+1$  so that upon putting $\rho=1000$, the error term is
	
	$O\left(\exp(3e^{1000}\mathbf{s}(2\mathbf{s}\log l_{i+1}+1)-500\mathbf{s}^2 l_{i}^{10^4})\right).$\\
	Our choice of $\mathbf{s}$ ensures
	\begin{equation}\label{bound2}
		3e^{1000}\mathbf{s}(2\mathbf{s}\log l_{i}+1)-500\mathbf{s}^2 l_{i}^{10^4}\le-l_i,
\end{equation}
so the error term is $O(\exp(-l_i)),$ as required. 
\end{proof}

In order to bound $\P(H\cap G_r\cap B_{r+1}^c),$ we partition on the values of $S_{r}$ and of the jump $\mathscr{P}_{r+1}=S_{{r+1}}-S_{r}.$ The event $G_r$ means $S_{r}\in [L_r,U_r]$, while $B_{r+1}^c$ ensures $S_{{r+1}}\le L_{r+1}.$ Hence we may decompose the probability of going below the barrier for the first time at $n_r$ as

\begin{equation}\label{Pvsplit}
	\P(H\cap G_r\cap B_{r+1}^c)\ll  \sum_{\substack{u\in [L_r,U_r]\\ u+v \le L_{r+1}}}\P\left(H\cap G_r\cap\{(S_{r}\in [u,u+1], S_{{r+1}}-S_{r}\in [v,v+1]\} )	
	\right).\end{equation} 
 We bound each probability using Markov's inequality, and to do so will have to factor in the large central value using the twisted mollifier formula, and bound all the relevant terms.
   A large value of $S_{{r+1}}-S_{r}$ is rare, and we bound its contribution using Markov's inequality on $|S_{{r+1}}-S_{r}|^{q_v},$ where \begin{equation}\label{eqn: qv}
 	q_v=\left\lceil \frac{v^2}{2\left(n_{r+1}-n_{r}\right)}\right\rceil.
 \end{equation} If $S_{{r+1}}-S_{r}$ is small, say less than $5$, we cannot show these values are rare. Instead, we can use Markov's inequality to show the event $H$ is very rare conditional on such small growth over the interval $P_r$. 
 In order to use Markov's inequality to show the central value cannot be too large very often, we observe that on $H$, we have \begin{equation}
 	L\ge \frac{e^V}{\sqrt{\log X}},
 \end{equation}
 while we use Lemma \ref{Mollbig} to bound the mollifier $M_{r+1}$. 
 We bound the contribution of the twists  with $ S_{{r+1}}-S_{r}\ge 5$ and those with $S_{{r+1}}-S_{r}\le 5$ separately. 
 If $ S_{{r+1}}-S_{r}\ge 5$, then we see that on $G_{r},$ Lemma \ref{Mollbig} yields
\begin{equation}
	M_{r+1}\gg \prod^{r+1}_{i=1}\exp(-\left(S_{i}-S_{{i-1}}\right))= \exp(-S_{r}-(S_{{r+1}}-S_{r})).
\end{equation}

The contribution to the probability $\P(H\cap G_r\cap B_{r+1}^c)$ of the characters with $S_{{r+1}}-S_{r}\ge 5$ may be bounded as

\begin{align}
	\ll \frac{\sqrt{\log X}}{e^V}\sum_{\substack{u\in [L_r,U_r]\\ u+v \le L_{r+1}\\ v\ge 5}}\E\left[L M_{r+1} \frac{|S_{{r+1}}-S_{r}|^{2q_v}}{v^{2q_v}}\mathbf{1}(S_{r}\in [u,u+1]\cap G_r)\right] e^{u+v} .
\end{align}

Since $5\le v\le L_{r+1}-L_r,$ we see Lemma \ref{walkmom} and Equation \eqref{2.7} from Lemma \ref{pointmom} apply with the choice of $q_v$ from Equation \eqref{eqn: qv}, and so the above is 
\begin{align}
	&\nonumber\ll \frac{\sqrt{\log X}}{e^V\sqrt{\log X_r}}\sum_{\substack{u\in [L_r,U_r]\\ u+v \le L_{r+1}\\ v\ge 5}}\frac{e^{-\frac{u^2}{\log \log X_r}}}{\sqrt{\log\log X}} \frac{(2q_v)!}{2^{q_v}q_v!v^{2q_v}}\left(n_{r+1}-n_{r}\right)^{q_v}e^{u+v}\\
	&\label{bigv}\ll  \frac{\sqrt{\log X}}{e^V\sqrt{\log X_r \log\log X}}\sum_{\substack{u\in [L_r,U_r]\\ u+v \le L_{r+1} \\ v\ge 5}}e^{-\frac{u^2}{\log \log X_r}+u}e^{v-\frac{v^2}{n_{r+1}-n_{r}}}.
\end{align}
It now remains to consider the contribution of the values with $S_{{r+1}}-S_{r}\le 5$ to Equation \eqref{Pvsplit}.

By Equations \eqref{trivbound} and \eqref{inB}, we see that if $v\le 5$ and  $d\in G_r$, then\begin{equation}A_{r+1}\ge e^{-5}\left(1+O\left(e^{-l_r}\right)\right).\end{equation}
Using Equation \eqref{trivbound} for $i\le r$ now yields that
\begin{equation}
	M_{r+1}\ge e^{-5}(1+O(e^{-l_r}))e^{-S_{r}}.
\end{equation}
Now that we have bounded below all the necessary quantities on $H\cap G_r\cap B_{r+1}^c$, we can use Markov's inequality to show \begin{align}\nonumber
	 &\sum_{\substack{u\in [L_r,U_r]\\ u+v \le L_{r+1}\\v\le 5}}\P\left(H\cap G_r\cap\{S_{r}\in [u,u+1], S_{{r+1}}-S_{r}\in [v,v+1]\}	
	\right)\\&\le \frac{\sqrt{\log X}}{e^V}\sum_{u\in [L_r,U_r]}\E\left[LM_{r+1}\mathbf{1}(S_{r}\in [u,u+1])\right]e^{u+5+O(e^{-l_r})}.
\end{align}
Using Equation \eqref{2.7} with $Q=1$ to evaluate the expectation, we see this is 
\begin{equation}
\label{smallv}\ll \log^{-\frac{1}{2}}(X_r) \sum_{u\in [L_r,U_r]}\frac{e^{-\frac{u^2}{n_r}}}{\sqrt{n_r}}e^{u+4}.
\end{equation}
By the construction of the intervals $X_j$, we see that if $v\le 5$ and $u\le U_r$, then $u+v\le L_{r+1}.$ Then combining Equations \eqref{bigv} and \eqref{smallv} yields 
\begin{equation}
	\P(H\cap G_r\cap B_{r+1}^c)\ll  \frac{\sqrt{\log X}}{e^V\sqrt{\log X_r\log\log X}}\sum_{\substack{u\in [L_r,U_r]\\ u+v \le L_{r+1} \\ v\ge 4}}e^{-\frac{u^2}{n_r}+u}e^{v-\frac{v^2}{n_{r+1}-n_{r}}}.
\end{equation}

We now perform the calculation for the Gaussian sum as in \cite{AB}.\\
We put $\tilde{u}=u-\kappa \log \log X_r$ where we subtract the midpoint of the interval $[L_r,U_r],$ so that $|\tilde{u}|\le \mathbf{s}l_{r+2},$ and $\tilde{v}=v-\kappa(n_{r+1}-n_{r}).$ Then, lifting the restriction $v\ge 4$, we see the above bound on the probability is

\begin{equation}
	\ll   \frac{\sqrt{\log X}}{e^V\sqrt{\log X_r\log\log X}}e^{-\kappa^2\log X_{r+1}}e^{\kappa n_{r+1}}\sum_{\substack{|\tilde{u}|\le \mathbf{s} l_{r+2},\\ \tilde{u}+\tilde{v} \le - \mathbf{s} l_{r+3}}}e^{(1-2\kappa)(\tilde{u}+\tilde{v})}e^{-\frac{\tilde{v}^2}{n_{r+1}-n_{r}}}.
\end{equation}
Performing first the sum over $\tilde{u}+\tilde{v}$, then $\tilde{v}$, we see that since $\kappa<\frac{1}{2}$ for $X$ sufficiently large, this is

\begin{align}
	&\nonumber\ll  \frac{\sqrt{\log X}}{(\frac{1}{2}-\kappa)e^V\sqrt{\log X_r\log\log X}} e^{-\kappa^2\log X_{r+1}-((1-2\kappa)\mathbf{s}+\kappa) l_{r+3}}\sqrt{n_{r+1}-n_{r}}\\
	&\ll \frac{e^{-\kappa^2\log X}}{\sqrt{\log\log X}} \frac{e^{\left(\frac{1}{2}+\kappa^2-\kappa-(1-2\kappa)\mathbf{s}\right)l_{r+3}}}{\frac{1}{2}-\kappa}.
\end{align}
Since $\mathbf{s}=\frac{10^5}{1-2\alpha},$ we see \begin{equation}\label{bound3}
	\frac{1}{2}+\kappa^2-\kappa-(1-2\kappa)\mathbf{s}<1-10^5
\end{equation}
if $X$ is sufficiently large, and so  \begin{equation}\label{PnotB}
\P(H\cap G_r\cap B_{r+1}^c)\ll \frac{e^{-\kappa^2\log X}}{\sqrt{\log\log X}} e^{-\delta l_{r+3}}\end{equation} for some $\delta>0$.
Substituting Equations \eqref{PnotA} and \eqref{PnotB} into Equation \eqref{decompmid} completes the proof of Proposition \ref{barrierprob} (i).
\subsection{Proof of Proposition \ref{barrierprob} (ii)}
It now remains to bound the probability of the walk staying within the barrier until the end. We partition on the value of $S_{R}$, to write
\begin{align}
	\P(G_{R})\ll \sum_{u\in [L_R,U_R]}\E\left[\mathbf{1}(S_R\in [u,u+1])\right].
\end{align}
Using Equation \eqref{2.4} with $Q=1$ shows this is

\begin{align}
	\ll \sum_{u\in [L_R,U_R]}\frac{e^{-\frac{u^2}{n_R}}}{\sqrt{n_R}}.
\end{align}
By the construction of $R$, we see $\log\log X-n_R=O\left(\frac{1}{\alpha}\right)+O\left(\frac{1}{1-2\alpha}\right).$ By comparing this to the law  of the random variable $Z\sim N(0,n_R),$ we see  \begin{align}
	\P(G_R)&\nonumber\ll \frac{e^{-\frac{L_R^2}{n_R}}}{\alpha\sqrt{\log \log X}}\\& \ll \frac{e^{-\frac{V^2-2V\left(O\left(\frac{1}{\alpha}\right)+O\left(\frac{1}{1-2\alpha}\right)\right)}{n_R}}}{\sqrt{\log\log X}}.\end{align}
	We recall $V=\alpha n_R+O(1),$ to show this is
	 $$\ll \frac{e^{-\frac{V^2}{\log \log X}}}{\sqrt{\log\log X}}\exp\left(O\left(\frac{1}{1-2\alpha}\right)\right).$$
	 This completes the proof of Proposition \ref{barrierprob} (ii).

\section{Proof of Proposition \ref{twistmollprop}}\label{SecTM}
\subsection{Proof method}
The proof of the bounds on the first moment of the twisted mollifier follow the methods in \cite{ABR20} for twisted mollified moments of the zeta function, and in \cite{AC25} for the $q$-aspect. We first quote Proposition 2 from \cite{RS2}, which gives an asymptotic for individual  twists of the form \begin{equation}\label{jointmoment}
	S(X;n,v)=\sum_{d\in\mathcal{E}(\mathscr{O},a,v)} L\left(\frac{1}{2},\chi_d(n)\right) \phi\left(\frac{\kappa d}{X}\right),\end{equation}
	with $(n,v)=(nv,N_0)=1$ and $v$ square-free.
We then substitute the expansion for the twisted mollifier $MQ^2$ and take the smoothed expectation of each term in the twist. In order to evaluate the twisted mollified moments, we use Rankin's trick to lift the effect of the smoothing, and then evaluate the entire sum. The calculations are simpler, at least notationally, than in \cite{AC25}, since we are only dealing with real characters, so don't have to handle the imaginary part of the twists. Moreover, we can neglect the contribution of squares of primes, since we are handling quadratic characters, which behave very predictably on squares.

In order to take twisted mollified moments, we must write the twisted mollifier in terms of its individual twists. Upon expansion, we see \begin{equation}
	M_rQ^2=\sum_n \frac{c_n \chi_d(n)}{n^{\frac{1}{2}}},
\end{equation} for some suitable choice of coefficients $\mathbf{c}=(c_1,\dots,c_N)$ of length $N=O(X^{\frac{1}{100}}).$ We require an approximate formula for the smoothed twisted moment:
\begin{equation}\label{deftwist}
	\mathscr{D}(\mathbf{c}):=\sum_{d\in\mathcal{E}(\mathscr{O},a,v)} L\left(\frac{1}{2}, E_d\right) \sum^N_{n=1}\frac{c_n \chi_d(n)}{n^{\frac{1}{2}}}\Phi\left(\frac{\kappa d}{X}\right).
\end{equation}

By taking the expectation of each twist $\chi_d(n)$, we see the above may be written as
	
	\begin{equation}\label{twist+prop2}
\sum^N_{n=1}\frac{c_{n}}{n^{\frac{1}{2}}} S(X;n,v) .
	\end{equation}
	We now recall Proposition 2 from \cite{RS2} to approximate the terms $S(X;n,v).$
	\begin{prop}[Proposition 2 in \cite{RS2}]\label{1twist}
		Let $S(X;n,v)$ be as defined in Equation \eqref{jointmoment}, and $n=mr^2$ with $m$ square-free. Then 
		\begin{equation}
			S(X;n,v)=
		 \frac{2Xa\left(m\right)}{v m^{\frac{1}{2}}N_0}\check{\Phi}(0)L_a\left(\frac{1}{2}\right)L(1, \text{sym}^2E)\mathcal{G}(1;n,v)+O(X^{\frac{7}{8}+\epsilon}n^{\frac{3}{8}}v^{\frac{1}{4}}),
			\end{equation}
			
			where we may write: \begin{equation}\label{defG}
			\mathcal{G}(1;n,v)=Cg(n)h(v),
			\end{equation}
			with $C=C(E)$ a non-zero constant and $g$ and $h$ multiplicative functions with $g(p^k)=1+O\left(\frac{1}{p}\right)$ and $h(p)=1+O\left(\frac{1}{p}\right).$
		
	\end{prop}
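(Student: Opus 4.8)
Proposition \ref{1twist} is quoted from \cite{RS2}; the following is a sketch of the argument there, which is the standard recipe for a first moment in a quadratic-twist family: open the central value by an approximate functional equation, interchange the order of summation, and evaluate the resulting sum of quadratic characters over the family by isolating the ``square'' frequencies.

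First, because we have restricted to $d\in\mathcal{E}(\mathscr{O},a,v)$ with root number $\epsilon_E(d)=+1$, the completed $L$-function satisfies $\Lambda(s,E_d)=\Lambda(1-s,E_d)$, so a contour shift of $\frac{1}{2\pi\ii}\int_{(2)}\Lambda\!\left(s+\tfrac12,E_d\right)\frac{\rd s}{s}$ together with the functional equation yields
\[
	L\!\left(\tfrac12,E_d\right)=2\sum_{\ell\ge1}\frac{a(\ell)\chi_d(\ell)}{\sqrt\ell}\,\omega\!\left(\frac{2\pi\ell}{\sqrt N\,|d|}\right),
\]
where $\omega$ is smooth, equal to $1+O(y)$ near $y=0$, and of rapid decay (it absorbs the archimedean factor $\Gamma(s+\tfrac12)$). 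Substituting this into $S(X;n,v)$ and swapping the $\ell$- and $d$-sums gives
\[
	S(X;n,v)=2\sum_{\ell\ge1}\frac{a(\ell)}{\sqrt\ell}\sum_{d\in\mathcal{E}(\mathscr{O},a,v)}\chi_d(n\ell)\,\omega\!\left(\frac{2\pi\ell}{\sqrt N\,|d|}\right)\Phi\!\left(\frac{\kappa d}{X}\right).
\]

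Next I would separate the diagonal. Writing $n=mr^2$ with $m$ square-free, $n\ell$ is a perfect square precisely when $\ell=ms^2$, and then $\chi_d(n\ell)=\mathbf{1}_{(d,n\ell)=1}$. The diagonal contribution is
\[
	2\sum_{s\ge1}\frac{a(ms^2)}{\sqrt m\,s}\sum_{\substack{d\in\mathcal{E}(\mathscr{O},a,v)\\(d,ns)=1}}\omega(\cdots)\,\Phi\!\left(\frac{\kappa d}{X}\right),
\]
and, after a Mellin separation of the $|d|$-dependence of $\omega$ and a M\"obius sieve restricting to square-free $d$, the inner sum counts fundamental discriminants in the class $a\bmod N_0$ with $v\mid d$, smoothly weighted; it is asymptotic to $\frac{X}{vN_0}\check\Phi(0)$ times a local density $\prod_p\left(1-\tfrac1p\right)(\cdots)$ depending only on the primes dividing $nvN_0$. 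Assembling the Euler product of $\sum_s a(ms^2)s^{-1}(\cdots)$ — whose generic factor at $p\nmid nv$ is $1+O(1/p)$, matching $\bigl(1-a(p)^2/p+\cdots\bigr)^{-1}$ — pulls out $L(1,\mathrm{sym}^2 E)$, the residue-class constant $L_a(\tfrac12)$ and an absolute constant $C(E)$, while the finitely many primes $p\mid nv$ furnish exactly the multiplicative factors $g(n),h(v)$ with $g(p^k)=1+O(1/p)$ and $h(p)=1+O(1/p)$. This produces the stated main term with $\mathcal{G}(1;n,v)=Cg(n)h(v)$.

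Finally, the off-diagonal terms $n\ell\neq\square$ are bounded by Poisson summation in $d$ along the progression $d\equiv a\pmod{N_0}$, $v\mid d$ (with the square-free restriction removed by M\"obius): for a fixed non-square modulus the quadratic Gauss sum attached to $n\ell$ vanishes at the zero frequency, the surviving dual frequencies are evaluated by classical Gauss-sum formulas, and one balances the $\ell$-sum — effectively of length $\asymp|d|\asymp X$, so $n\ell\ll nX$ — against the length of the dual sum. Tracking the dependence on $n$ and $v$ through the conductor of the associated quadratic character yields the error $O(X^{7/8+\epsilon}n^{3/8}v^{1/4})$. The main obstacle is precisely this last step: extracting a genuine power saving in $X$ while keeping the polynomial dependence on $n$ (and $v$) explicit and small enough to remain useful when $n$ is as large as $X^{1/100}$; this uniformity is the delicate part, and it is what ultimately constrains the admissible length of the twists used in Proposition \ref{twistmollprop}.
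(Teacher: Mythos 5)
The paper does not prove this statement at all: it is quoted verbatim as Proposition 2 of \cite{RS2}, so the only "proof" in the paper is the citation, and your treatment (deferring to \cite{RS2} while sketching its argument) is essentially the same. Your outline — approximate functional equation for $L\left(\tfrac12,E_d\right)$, interchange of summation, extraction of the diagonal $\ell$ with $n\ell=\square$ giving the main term with $L(1,\mathrm{sym}^2E)$ and the local factors $g,h$, and Poisson summation over the discriminants for the non-square frequencies giving the power-saving error with explicit $n,v$ dependence — is a faithful summary of how that result is established in \cite{RS2}.
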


We use Proposition \ref{1twist} to handle each term arising in the twisted mollified moments. If we label the coefficients of the mollifier, so that
\begin{equation}\label{Mcoef}M_r=\sum_{t}\frac{e_t\chi_d(t)}{t^{\frac{1}{2}}},\end{equation} and the coefficients of the twist $Q$ are as defined in Equation \eqref{defQd}, then we see that 
\begin{equation}\label{coefprod}
	c_n=\sum_{tw_1w_2=n}e_tC(w_1)C(w_2).
\end{equation}
Upon substituting this into Equation \eqref{twist+prop2} and using Proposition \ref{1twist}, we see
\begin{align}
		\mathscr{D}(\mathbf{c})&=\sum_{n}\frac{\sum_{tw_1w_2=n}\nonumber e_{t}C(w_1)C(w_2)}{n^{\frac{1}{2}}}\times\\&\left( \frac{2Xa\left(m\right)}{v \left(m\right)^{\frac{1}{2}}N_0}\hat{\Phi}(0)L_a\left(\frac{1}{2}\right)L(1, \text{sym}^2E)\mathcal{G}(1;n,v)+O(X^{\frac{7}{8}+\epsilon}(n_1n_2)^{\frac{3}{8}}v^{\frac{1}{4}})\right).\label{firstD} \end{align}

We rewrite the above equation to group together terms with the same coefficients  $C(w_1)C(w_2)$ coming from the twist together. We have the following convenient relation between squarefree parts: if $n=tw_1w_2$, where $t=f^2x$, $w_1w_2=h^2y$, with $x$ and $y$ squarefree, then $m=\frac{xy}{(x,y)^2}.$ Moreover, $y$ only depends on the squarefree parts of $w_1$ and $w_2,$ and we may write \begin{equation}
	y=\prod_{p\le X_r } p^{\xi_p( w_1w_2)},\end{equation} where we define the parity \begin{equation}\xi_p(w):=2\left\{\frac{v_p(w)}{2}\right\}.\end{equation}
	This parity is $0$ if $p$ divides $w$ to an even power, and so does not divide the squarefree part of $w$, and $1$ if $p$ divides the squarefree part. Thus the diagonal terms correspond to the pairs $(w_1,w_2)$ where $\xi_p(w_1,w_2)=0$ for all $p\le X_R.$

Hence we may rewrite the expression for $\mathscr{D}(\mathbf{c})$ in Equation \eqref{firstD} as:
\begin{align}&\nonumber
		\sum_{w_1,w_2=1}\frac{C(w_1)C(w_2)}{(w_1w_2)^{\frac{1}{2}}}\left(\sum_{t} \frac{e_{t}}{t^{\frac{1}{2}}} \frac{2Xa\left(\frac{xy}{(x,y)^2}\right)}{v \left(\frac{xy}{(x,y)^2}\right)^{\frac{1}{2}}N_0}\hat{\Phi}(0)L_a\left(\frac{1}{2}\right)L(1, \text{sym}^2E)\mathcal{G}(1;tw_1w_2,v)\right)\\&
+O\left(X^{\frac{8}{9}}v^{\frac{1}{4}}\sum_{w}\frac{C(w)^2}{w} \max_t\frac{\left|e_{t}\right|}{t}
\right).
\end{align}
Here, we used the inequality \begin{equation}\label{CS2}
	\left|XY\right|\le \frac{1}{2}\left(X^2+Y^2\right)
\end{equation}to simplify the error term, and bounded the size of $w$ and $t$.

Since the Dirichlet polynomials $M_r$ and $Q$ split into factors determined by primes in the intervals $P_j,$ and $a(n)$ and $\mathcal{G}(1;t,v)$ are multiplicative, we see

\begin{align}\label{wellfactorbound}
	\mathscr{D}(\mathbf{c})&\ll\frac{X\hat{\Phi}(0)L_a\left(\frac{1}{2}\right)L(1,\text{sym}^2E)}{vN_0}\prod^{r}_{j=1}\mathscr{N}_j\\\nonumber&+O\left(X^{\frac{8}{9}}v^{\frac{1}{4}}\prod^r_{j=1}\left(\max_{p|t\implies p\in{(X_{j+1},X_j]}}\left( \frac{\left|e_t\right|}{t}\right)\sum_{p|w\implies p\in (X_{j+1},X_{j}]}\frac{\gamma(w)^2}{w}\right)\right),
\end{align}
where \begin{equation}\label{deftilN}
	\mathscr{N}_j=\sum_{\substack{p|q_1q_2\implies p\in P_{j}\\ q_1,q_2\text{ square-free}}}\left|\sum_{\substack{p|w_1w_2\implies p\in P_{j}\\ w_1q_1,w_2q_2=\square}}\frac{\gamma(w_1)\gamma(w_2)}{(w_1w_2)^{\frac{1}{2}}}\sum_{\substack{p|t\implies p\in (X_{j+1},X_{j}]}}			
	\frac{e_{t}}{t^{\frac{1}{2}}} \frac{2Xa\left(\frac{xy}{(x,y)^2}\right)}{v \left(\frac{xy}{(x,y)^2}\right)^{\frac{1}{2}}N_0}\mathcal{G}(1;tw_1w_2,v)
	\right|
\end{equation}
for $1\le j\le r$.

In Lemma \ref{Chernprop}, we will use Rankin's trick to remove the restriction on the truncation of the mollifiers $\exp(-\mathscr{P}_j)$ at $\mathscr{A}_j,$ replacing $\mathscr{N}_j$ with $\tilde{\mathscr{N}}_j.$ In Lemma \ref{twistmoll}, we will then evaluate the sum without the restriction, completing the proof of Proposition \ref{twistmollprop}.

We define coefficients $\tilde{e}_t$ given by the Dirichlet series relation:
\begin{equation}\label{expDirich}
	\exp\left(-\sum_{p\in P_j} \frac{a(p)\chi_d(p)}{p^{s}}\right)=\sum_{p|t\implies p\in P_j} \frac{\tilde{e}_t \chi_d(t)}{t^s}.
\end{equation}
These would be the coefficients of the mollifier $\exp\left(-\mathscr{P}_j\right)$ without the truncation used to constructional $\mathscr{A}_j$. In Lemma \ref{Chernprop}, we show the effect of the additional terms after the truncation are negligible.

We will see in Lemma \ref{twistmoll}, that if we could discard the restriction on the prime factorisations, then the coefficient multiplying each twist $\frac{\gamma(w_1)\gamma(w_2)}{(w_1w_2)^{\frac{1}{2}}} $ in Equation \eqref{deftilN} would factor as an Euler product $\prod_{p\in P_{j}} 	\beta_p(w_1w_2)$, 	where the Euler factor is defined as: 
\begin{equation}\label{betaf}
	\beta_p(w)=\sum^{\infty}_{i=0}  \frac{a(p)^{2i+\xi_{p}(w)}}{p^{i+\frac{\xi_p(w)}{2}}(2i)!}\mathcal{G}_p(1;p^{2i+v_p(w)},v)-\sum^\infty_{i=1}\frac{a(p)^{2i-\xi_{p}(w)}}{p^{i-\frac{\xi_p(w)}{2}}(2i-1)!} \mathcal{G}_p(1;p^{2i+v_p(w)-1},v).
\end{equation}
We use Rankin's trick to bound the effect off the additional terms in the mollifier by a small multiple of the sum. Then when we evaluate the main term in Lemma \ref{twistmoll} with the restriction lifted, we will see the off-diagonal terms become negligible.

	\begin{lem}\label{Chernprop}
	With the notations as above, we have
	
	\begin{equation}\begin{split}\label{wellfactorsmooth}
			\mathscr{D}(\mathbf{c})\ll \frac{X\hat{\Phi}(0)L_a\left(\frac{1}{2}\right)L(1,\text{sym}^2E)h(v)}{vN_0}\prod^{r}_{j=1}\tilde{\mathscr{N}}_j+O\left(X^{\frac{9}{10}}v^{\frac{1}{4}}\max_w\frac{C(w)^2}{w}\right),
	\end{split}\end{equation} where
	\begin{align}\label{defN}\tilde{\mathscr{N}}_j=\sum_{\substack{p|q_1q_2\implies p\in P_{j}\\ q_1,q_2\text{square-free}}}\left|\sum_{\substack{p|w_1w_2\implies p\in P_{j}\\ w_1q_1,w_2q_2=\square}}\frac{\gamma(w_1)\gamma(w_2)}{(w_1w_2)^{\frac{1}{2}}}\times
		\sum_{\substack{p|t\implies p\in (X_{j+1},X_{j}]}}			
		\frac{\tilde{e}_{t}}{t^{\frac{1}{2}}} \frac{a\left(\frac{xy}{(x,y)^2}\right)}{ \left(\frac{xy}{(x,y)^2}\right)^{\frac{1}{2}}}g(tw_1w_2)
		\right|\\+O\left(\exp(-99(l_{j+1}-l_{j})^2)\sum_{\substack{p|q_1\implies p\in P_{j}\\ q_1\text{  squarefree}}}\sum_{q_1w,q_1w=\square} \frac{\gamma(w_1)\gamma(w_2)}{(w_1w_2)^{\frac{1}{2}}} \prod_{p\in P_{j}} 	\beta_p(w_1w_2)\right).
	\end{align}
\end{lem}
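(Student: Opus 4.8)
\textbf{Proof plan for Lemma \ref{Chernprop}.}

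The plan is to start from the expression for $\mathscr{D}(\mathbf{c})$ in Equation \eqref{wellfactorbound}, which already splits $\mathscr{D}(\mathbf{c})$ into the product $\prod_{j=1}^r \mathscr{N}_j$ together with a polynomial error term. The first task is to absorb the error term in \eqref{wellfactorbound} into $O\left(X^{9/10}v^{1/4}\max_w \frac{C(w)^2}{w}\right)$. Here I would use the length bounds established in the excerpt: the mollifier $M_r$ and twist $Q$ each have length $\ll X^{1/1000}$, so the product $\prod_j \left(\max \frac{|e_t|}{t}\right)\left(\sum \frac{\gamma(w)^2}{w}\right)$ is bounded by a fixed power of $X$ that is small compared to $X^{1/100}$, and hence the whole error is $\ll X^{8/9 + o(1)} v^{1/4} \max_w \frac{C(w)^2}{w} \ll X^{9/10} v^{1/4}\max_w \frac{C(w)^2}{w}$ for $X$ large. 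The constant factor $L(1,\mathrm{sym}^2 E)$, $L_a(1/2)$, $\hat\Phi(0)$, $C(E)$ and the value $h(v)$ (from $\mathcal{G}(1;n,v) = C g(n) h(v)$, Equation \eqref{defG}) I would pull out of each $\mathscr{N}_j$ and collect into the prefactor of the main term; after this cleanup the factor in $\mathscr{N}_j$ that depends on primes in $P_j$ involves only $g(tw_1w_2)$ and the ratio $a\!\left(\frac{xy}{(x,y)^2}\right)/\left(\frac{xy}{(x,y)^2}\right)^{1/2}$, i.e. exactly the expression appearing in $\tilde{\mathscr{N}}_j$ in \eqref{defN}.

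The heart of the lemma is then passing from the truncated coefficients $e_t$ (coming from $\mathscr{A}_j$, the truncation of $\exp(-\mathscr{P}_j)$ at degree $(l_j - l_{j+1})^{10^5}$, Equation \eqref{defaj}) to the full coefficients $\tilde e_t$ defined by the Dirichlet series relation \eqref{expDirich}. Write $\tilde e_t = e_t + r_t$, where $r_t$ is supported on $t$ with $\Omega_j(t) > (l_j - l_{j+1})^{10^5}$ (the tail of the Taylor expansion). I would substitute $e_t = \tilde e_t - r_t$ into $\mathscr{N}_j$, so that $\mathscr{N}_j = \tilde{\mathscr{N}}_j^{\text{main}} + (\text{tail contribution from } r_t)$, where the main term, once the restriction on the $t$-sum is lifted, is exactly $\tilde{\mathscr{N}}_j$ (this identification — that the unrestricted $t$-sum against $\tilde e_t$ produces the Euler product $\prod_{p \in P_j}\beta_p(w_1 w_2)$ with $\beta_p$ as in \eqref{betaf} — is flagged in the excerpt as being proven in Lemma \ref{twistmoll}, so I may cite it). The tail contribution is bounded by Rankin's trick: for any $\rho > 0$,
\begin{equation*}
\sum_{\substack{p|t \implies p \in P_j \\ \Omega_j(t) > (l_j - l_{j+1})^{10^5}}} \frac{|r_t|}{t^{1/2}}(\cdots) \ll e^{-\rho (l_j - l_{j+1})^{10^5}} \sum_{p|t \implies p \in P_j} \frac{|\tilde e_t| e^{\rho \Omega_j(t)}}{t^{1/2}}(\cdots),
\end{equation*}
and the inflated sum over $t$ is again an Euler product over $p \in P_j$, convergent because the primes in $P_j$ exceed $X_{j+1} = X^{1/l_{j+1}}$, which is large, so $\sum_{p \in P_j} p^{-1/2} e^{\rho} \cdot |a(p)|$ is tiny. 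Choosing $\rho$ a suitable absolute constant (as in the proof of Lemma \ref{Mollbig}, where $\rho = 1000$ was used) and using $l_j - l_{j+1} \gg \mathbf{s} 10^5$, the factor $e^{-\rho(l_j - l_{j+1})^{10^5}}$ beats any polynomial-in-$l$ growth from the inflated Euler product, yielding a bound of size $\exp\left(-99(l_{j+1}-l_j)^2\right)$ times the positive ``Euler product'' sum $\sum_{q_1 w_1, q_1 w_2 = \square} \frac{\gamma(w_1)\gamma(w_2)}{(w_1 w_2)^{1/2}}\prod_{p \in P_j}\beta_p(w_1 w_2)$ appearing as the error in \eqref{defN}. (One subtlety: I must not take the triangle inequality inside the diagonal $q_1$-blocks, only across distinct square-free parts $q_1$, which is why the error term in \eqref{defN} is organized by $q_1$ with the inner diagonal sum left intact — this mirrors the remark after Proposition \ref{twistmollprop}.)

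I expect the main obstacle to be bookkeeping the interaction between the $t$-variable (the mollifier variable, whose primes lie in $(X_{j+1}, X_j]$) and the $w_1, w_2$-variables (the twist variables), in particular keeping track of how $\xi_p(w_1 w_2)$ and the square-free parts $x$ (of $t$) and $y$ (of $w_1 w_2$) combine into $\frac{xy}{(x,y)^2}$ when the restriction is lifted, and verifying that after summing the unrestricted $t$-series the result genuinely factors through $\beta_p$ — though since Lemma \ref{twistmoll} is cited for that factorization, the burden here is mainly to show the Rankin estimate is uniform in $w_1, w_2$ so that it can be pulled outside the $w$-sums. The secondary obstacle is confirming the numerical inequality that lets the Rankin saving $e^{-\rho(l_j-l_{j+1})^{10^5}}$ dominate, which follows from the choice of $\mathbf{s}$ in Equation \eqref{s} and the spacing $l_{j+1} - l_j > \mathbf{s}10^5$ exactly as in \eqref{bound2}.
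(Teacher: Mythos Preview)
Your outline tracks the paper's strategy up to the point where Rankin's trick enters, but there is a genuine gap at precisely the step you flag as ``the main obstacle''. You write that the inflated $t$-sum is an Euler product which converges because $\sum_{p\in P_j} p^{-1/2}e^{\rho}|a(p)|$ is tiny. This is wrong on two counts. First, for $j=1$ the interval $P_1$ contains all primes up to $X_1$, so $\sum_{p\in P_1}p^{-1/2}$ diverges; the paper treats $j=1$ separately at Equation~\eqref{casesplitchern}, using that the truncation length there is $20\lceil\log\log X\rceil$ rather than $(l_j-l_{j+1})^{10^5}$. Second, even for $j\ge2$ the relevant Euler factor is governed by $\sum_{p\in P_j}a(p)^2/p\sim n_j-n_{j-1}$ (Lemma~3 of \cite{RS2}), not by $\sum p^{-1/2}$; see the paper's bound~\eqref{psibound}, which produces a factor $\exp(e^{2\rho}(n_j-n_{j-1}))$ that must then be beaten by the Rankin saving.

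The deeper issue is the uniformity in $w_1,w_2$. You say the burden is ``mainly to show the Rankin estimate is uniform in $w_1,w_2$ so that it can be pulled outside the $w$-sums'', but you give no mechanism for this, and a naive triangle inequality over $w_1,w_2$ would destroy exactly the diagonal cancellation you warn yourself about. The paper's device is Lemma~\ref{switchcher}: it views the $w$-sum at fixed $(q_1,q_2,t)$ as a quadratic form $\Delta(q_1,q_2,t)[\boldsymbol{\gamma}]$ in the coefficients $\gamma(w)$, proves that the diagonal form $\Delta(q_1,q_1,1)$ is non-negative definite, and shows $|\Delta(q_1,q_2,t)|\le\tfrac12(\Delta(q_1,q_1,1)+\Delta(q_2,q_2,1))$ via the quadratic-form machinery of Appendix~\ref{Sec: QF}. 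Only after this domination can the Rankin-weighted $t$-sum be separated from the $w$-variables and evaluated as the Euler product $\prod_p\phi_p(y)$; the subsequent sum over $q_2$ then collapses using the bound $\Omega(q_i)\le10(l_j-l_{j+1})^{10^4}$ from Definition~\ref{Qdef}. Without Lemma~\ref{switchcher} (or an equivalent), your plan has no way to arrive at the error term in~\eqref{defN} with the inner diagonal sum intact.
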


Here, as before, we write $t=f^2x$ and $w_1w_2=h^2y$, with $x$ and $y$ squarefree, and note that the restriction on the prime factors of $t$ and $w$ lying in the interval $P_j$ extends to their factors.
 
\begin{rem}
	
	The key observation which complicates the proof of Proposition \ref{Chernprop} comes from the fact that the diagonal terms include all those with square products, not just the twists which are equal. 
In general, there may be cancellation among the diagonal terms with square product, and this cancellation must be factored into adding any extra terms in the mollifier. This means more care must be taken   compared to Lemma 3.6 in \cite{AC25}. \\ For example, if we took a twist with $Q_d(\frac{1}{2})=\chi_d(p^2)-\chi_d(q^2)$ for large distinct primes $p,q\approxeq X_r,$ then  $Q_d(\frac{1}{2})$ would nearly always be $0$. However, the individual error terms from the extra coefficients from the mollifier multiplying the cross terms $\chi_d(p^4), \chi_d(p^2q^2)$ and $\chi_d(q^2)$ may be large, and so if taken individually would become dominant.
	
	Unlike in \cite{AC25}, where the extra terms in the mollifier could be considered separately at each twist, in order to preserve the cancellation in diagonal terms, we must consider each extra term in the mollifier and the combined contribution of all the twists.
\end{rem}

With the truncation lifted, we can then evaluate the bound for the twisted mollified moment.  

\begin{lem}\label{twistmoll}
	Using the above notation, we have,
	\begin{align}\label{wellfactorconj}
		\prod^{r}_{j=1}\tilde{\mathscr{N}}_j&\ll\log ^{-\frac{1}{2}}(X_r)\sum_{\substack{p|q\implies p\le X_r\\ q\text{ square-free}}}\sum_{\substack{p|u_1u_2\implies p\le X_r\\ u_1q,u_2q=\square}} \frac{C(u_1)C(u_2)}{(u_1u_2)^{\frac{1}{2}}}\prod_{p|u_1u_2}\left(1-\frac{1}{p
		}\right).
	\end{align}
	Moreover, each square-free $q$ has a non-negative summand in the above equation. 
\end{lem}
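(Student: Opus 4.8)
\textbf{Proof proposal for Lemma \ref{twistmoll}.}

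The plan is to evaluate each local factor $\tilde{\mathscr{N}}_j$ by recognising that, once the truncation on the mollifier coefficients has been lifted (this is exactly what Lemma \ref{Chernprop} buys us, modulo the stated error term), the inner sum over $t$ with the coefficients $\tilde{e}_t$ of $\exp(-\mathscr{P}_j)$ combines multiplicatively with the twist coefficients. First I would fix a square-free pair $q_1,q_2$ with prime factors in $P_j$ and, for each admissible pair $(w_1,w_2)$ with $w_1q_1,w_2q_2=\square$, carry out the sum $\sum_t \frac{\tilde{e}_t}{t^{1/2}}\,\frac{a(xy/(x,y)^2)}{(xy/(x,y)^2)^{1/2}}\,g(tw_1w_2)$. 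Since $\tilde{e}_t$, $a(\cdot)$ and $g(\cdot)$ are all multiplicative and the condition on prime support localises everything to a single prime $p$ at a time, this factors as an Euler product $\prod_{p\in P_j}\beta_p(w_1w_2)$ with $\beta_p$ exactly the local factor displayed in Equation \eqref{betaf}; the two sums in \eqref{betaf} correspond to the even and odd parts of the Taylor expansion of $\exp\bigl(-a(p)\chi_d(p)p^{-s}\bigr)$ evaluated against $\mathcal{G}_p$. I would then expand $\beta_p(w)$ using $a(p)=2+O(p^{-1/2})$ (Hasse) and $\mathcal{G}_p(1;\cdot,v)=1+O(1/p)$ together with $g(p^k)=1+O(1/p)$, so that $\beta_p(w)=\bigl(1-\tfrac1p\bigr)\cdot\bigl(1+O(p^{-1/2})\bigr)$ when $p\nmid w_1w_2$ and is $O(p^{-1/2})$ when $p\mid w_1w_2$; crucially the factor $1-\frac1p$ is the same Euler factor appearing on the right-hand side of \eqref{wellfactorconj}.

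Next I would take the product over $1\le j\le r$. Because the mollifier $M_r=\prod_{j\le r}\mathscr{A}_j$ and the twist $Q=\prod_{j\le r}Q_{d,j}$ both factor over the intervals $P_j$, the coefficients $C(u_1),C(u_2)$ of the \emph{full} twisted mollifier (in the notation of Equation \eqref{defQd}, together with the $\tilde e_t$'s assembled into the full reciprocal-Euler-product coefficients) reconstitute multiplicatively from the pieces in each $\tilde{\mathscr{N}}_j$. So $\prod_j\tilde{\mathscr{N}}_j$ becomes a single sum over square-free $q$ (with $q=\prod_j q_j$, $q_1=\prod_j q_{1,j}$ etc.) of $\bigl|\sum_{u_1q,u_2q=\square}\frac{C(u_1)C(u_2)}{(u_1u_2)^{1/2}}\prod_{p\mid u_1u_2}(1-\tfrac1p)\bigr|$, after absorbing the convergent $\prod_p(1+O(p^{-3/2}))$-type corrections into the implied constant and using $L(1,\mathrm{sym}^2E)^{-1}$-type identities to cancel the smooth constants against those in \eqref{wellfactorsmooth}. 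The factor $\log^{-1/2}(X_r)$ appears from the size of the diagonal $q=1$ contribution: summing $\prod_{p\le X_r}\bigl(1-\tfrac1p\bigr)\cdot(\text{coefficient mass})$ over the full mollifier length produces, by Mertens, a factor $\asymp (\log X_r)^{-1/2}$, matching the $\log^{-1/2}(X_r)$ already pulled out of the global bound; here one uses that $\mathscr{A}_j$ truncates $\exp(-\mathscr{P}_j)$ whose "expected size" is $(\log\log p)^{-1/2}$ in $L^1$.

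For the final sentence — that each square-free $q$ contributes a non-negative summand — I would argue that for fixed $q$ the inner double sum $\sum_{u_1q,u_2q=\square}\frac{C(u_1)C(u_2)}{(u_1u_2)^{1/2}}\prod_{p\mid u_1u_2}(1-\frac1p)$ is itself a perfect square (or a product $T(q)^2$) of a single sum over those $u$ with $uq=\square$, because the summation condition "$u_1q=\square$ and $u_2q=\square$" decouples as "$u_1q=\square$" and "$u_2q=\square$" independently, the weight $\prod_{p\mid u_1u_2}(1-\frac1p)$ \emph{almost} factors across $u_1,u_2$, and the coefficients $C$ are real; more precisely, after distributing the Euler factors one writes the $q$-summand as $\bigl(\sum_{uq=\square}C(u) u^{-1/2}\prod_{p\mid u}(1-\tfrac1p)^{1/2}\bigr)^2\cdot(1+o(1))\ge 0$. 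The main obstacle I anticipate is precisely this last point: the weight $\prod_{p\mid u_1u_2}(1-\frac1p)$ does not split as $\prod_{p\mid u_1}(\cdot)\prod_{p\mid u_2}(\cdot)$ when $u_1,u_2$ share prime factors, so one must either re-derive $\beta_p(w_1w_2)$ keeping the $w_1,w_2$ dependence exact (which is why \eqref{betaf} is written in terms of $w=w_1w_2$ and the combined parity $\xi_p$) and verify that the resulting quadratic form in the $C(u)$'s is positive semi-definite — appealing to the quadratic-form lemmata in Appendix \ref{Sec: QF} — or show directly that the off-diagonal ($q\ne\square$-part mismatch) contributions are genuinely negligible so that only the manifestly non-negative diagonal survives. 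Getting the constants in \eqref{wellfactorconj} to line up with those hidden in \eqref{wellfactorsmooth}, via the Euler product for $L(1,\mathrm{sym}^2E)$, is routine bookkeeping by comparison.
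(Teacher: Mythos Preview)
Your factorisation of the inner $t$-sum into the Euler product $\prod_{p\in P_j}\beta_p(w_1w_2)$ is exactly the first step the paper takes (Equation~\eqref{newN}). But there is a genuine confusion about where the factor $\log^{-1/2}(X_r)$ originates. It does \emph{not} come from Mertens applied to $\prod_{p\mid u_1u_2}(1-\tfrac1p)$ --- that product involves only the finitely many primes dividing $u_1u_2$, and Mertens would in any case give exponent $-1$, not $-\tfrac12$. The paper instead computes $\beta_p(1)=\exp\bigl(-\tfrac{a(p)^2}{2p}+O(p^{-3/2})\bigr)$, and it is the product of \emph{these} over all $p\le X_r$ that yields $(\log X_r)^{-1/2}$, via the symmetric-square estimate $\sum_{p\le x}a(p)^2/p=\log\log x+O(1)$ (Lemma~3 of \cite{RS2}), not Mertens. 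Relatedly, your claimed expansion $\beta_p(w)=(1-\tfrac1p)(1+O(p^{-1/2}))$ is wrong on two counts: the Hasse bound says $|a(p)|\le 2$, not $a(p)=2+O(p^{-1/2})$, and an $O(p^{-1/2})$ multiplicative error would not even give a convergent product over $p\le X_r$; the actual error is $O(p^{-3/2})$, and the leading behaviour of $\beta_p(1)$ is $\exp(-a(p)^2/(2p))$, not $1-\tfrac1p$. The factor $(1-\tfrac1p)$ in the final formula attaches only to primes $p\mid u_1u_2$, coming from $\beta_p(p^{2j})/\beta_p(1)$ for $j\ge 1$.

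The second gap is the passage from the double sum over $(q_1,q_2)$ with outer absolute values to a single sum over square-free $q$. Your ``almost a perfect square'' heuristic fails for precisely the reason you flag: $\prod_{p\mid u_1u_2}(1-\tfrac1p)$ does not split. The paper's mechanism is Lemma~\ref{freebound}: one shows, prime by prime via Lemma~\ref{splitbound} and the $2\times 2$ criterion of Lemma~\ref{sufconthm}, that the diagonal bilinear form (at $q_1=q_2$) is non-negative definite, and that whenever $\xi_r(q_1q_2)=1$ the off-diagonal form is dominated by the diagonal one times $s_r=\beta_r(r)/\beta_r(r^2)=O(r^{-3/2})$. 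Summing $\prod_{r:\xi_r(q_1q_2)=1}O(r^{-3/2})$ over $q_2$ then shows the off-diagonal contribution is $O(1)$ times the diagonal, which both removes the absolute values and establishes the non-negativity claim in the last sentence. You gesture towards Appendix~\ref{Sec: QF} as a fallback, which is indeed the right toolkit, but the specific $O(r^{-3/2})$-per-mismatched-prime bound is the missing ingredient.
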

 
Substituting the bound from Lemma \ref{twistmoll} into Lemma \ref{Chernprop} will then complete the proof of Proposition \ref{twistmollprop}.
\subsection{Proofs of Lemmata \ref{Chernprop} and \ref{twistmoll}} In order to prove Lemma \ref{Chernprop}, we show we can add the contribution of the terms with many prime factors to Equation \eqref{defN} which were not present in Equation \eqref{deftilN} with a small error. In order to bound the contribution of the extra terms in the mollifier, we appeal to Lemma \ref{switchcher}, which simplifies the use of Rankin's trick. We then prove Lemma \ref{twistmoll}, where we see once the effects of the truncation has been lifted, the diagonal twists with square product dominate the expectation. In order to bound the off-diagonal terms, we use Lemma \ref{freebound} to show that the total contribution of all off-diagonal twists is negligible. The proofs of Lemmata \ref{switchcher} and \ref{freebound} are deferred to the end of the section.
\begin{proof}[Proof of Lemma \ref{Chernprop}.]
		
		In order to get a short enough Dirichlet polynomial $\mathscr{A}_j$ to apply Proposition \ref{1twist}, we had to truncate the expansion of $\exp\left(-\mathscr{P}_j\right)$ at some large power $(l_j-l_{j+1})^{10^5}.$ However, to express the resulting expression for the coefficients in the twisted mollifier formula as an Euler product, we need the full sum.  
		We thus want to use Rankin's trick to show we can lift the restriction on the truncation at $l_j$ of powers of $-\mathscr{P}_j.$
		
		We recall the desired mollifier from Equation \eqref{expDirich}. We would like to approximate $\mathscr{N}_{j}$ by the multiplicand  
		\begin{align}\label{hatN}\hat{\mathscr{N}}_j=\sum_{\substack{p|q_1q_2\implies p\in P_{j}\\ q_1,q_2\text{ square-free}}}\left|\sum_{\substack{p|w_1w_2\implies p\in P_{j}\\ w_1q_1,w_2q_2=\square}}\frac{\gamma(w_1)\gamma(w_2)}{(w_1w_2)^{\frac{1}{2}}}
			\sum_{\substack{p|u\implies p\in P_j}}			
			\frac{\tilde{e}_{u}}{u^{\frac{1}{2}}} \frac{2Xa\left(\frac{xy}{(x,y)^2}\right)}{ \left(\frac{xy}{(x,y)^2}\right)^{\frac{1}{2}}}g(uw_1w_2)\right|.\end{align}
		We incur some error in this approximation, which gives the error terms in $\tilde{\mathscr{N}}_{j}.$		
		This approximation follows the use of Rankin's trick in proving Proposition 5 in \cite{RS2}, and we borrow much of the same notation.
		
		If we ignored the truncation of the exponential at some high power of $\mathscr{P}_j,$ we would have the main term $\hat{\mathscr{N}}_{j},$ with an Euler product at the prime $p$ for each product on the twist depending on the parity $\xi_{p}(w_1w_2)$.
		Indeed, we may express $\hat{\mathscr{N}}_{j}$ as:  
		
		\begin{equation}\label{newN}\begin{split}
				\sum_{\substack{p|q_1q_2\implies p\in P_{j}\\ q_1,q_2\text{ square-free}}}\left|\sum_{\substack{p|w_1w_2\implies p\in P_{j}\\ w_1q_1,w_2q_2=\square}}\frac{\gamma(w_1)\gamma(w_2)}{(w_1w_2)^{\frac{1}{2}}}\prod_{p\in P_j}\beta_p(w_1w_2) \right|,	\end{split}
		\end{equation}
		where $\beta_p(w_1w_2)$ is as defined in Equation \eqref{betaf}.
	We now use Rankin's trick to bound the contribution of the additional primes. 
	
	The contributions of the terms with $i\ge (l_j-l_{j+1})^{10^5}$ may be bounded by Rankin's trick for any $\rho>0$ 
	as

	\begin{equation}\label{Rankin1}\begin{split}
			\le e^{- \rho(l_j-l_{j+1})^{10^5}} \sum_{p|t\implies p\in P_j}\sum_{\substack{p|q_1q_2\implies p\in P_{j}\\ q_1,q_2\text{  squarefree}}} \left|\frac{e^{\rho \Omega(t)} e_ta\left(\frac{xy}{(x,y)^2}\right)}{t^{\frac{1}{2}}\left(\frac{q_1q_2xy}{(x,y)^2}\right)^{\frac{1}{2}}} \Delta(q_1,q_2,t)\left[\mbox{\boldmath$\gamma$}\right]\right|, 
		\end{split}
	\end{equation}
	where \begin{equation}\Delta(q_1,q_2,t)\left[\mbox{\boldmath$\gamma$}\right]=  \sum_{\substack{p|w_1w_2\implies p\in P_{j}\\ w_1q_1,w_2q_2=\square}}\frac{\gamma(w_1)\gamma(w_2)}{(w_1w_2)^{\frac{1}{2}}} \mathcal{G}(1;tw_1w_2,v).
	\end{equation}
	It is important to note that in this form, we are ensuring that any additional terms are multiplied by something as small as the expected value of the twist, since if two terms cancel in the twist, this cancellation will also be reflected in the sums $\Delta(q_1,q_2,t)\left[\mbox{\boldmath$\gamma$}\right].$

	We then bound $\Delta(q_1,q_2,t)\left[\mbox{\boldmath$\gamma$}\right]$  it in terms of the diagonal terms at $q_1$ and $q_2$.
	Note that $\Delta(q_1,q_2,t)$ defines a quadratic form on the variables $\gamma(w)$, where $w$ has square-free part $q_1$ or $q_2$. Moreover, the coefficients factor into a product of primes, since $\mathcal{G}$ does.
	We can thus use Lemma \ref{splitbound} to convert bounds on the quadratic forms restricted to powers of a single prime $p$ into bounds on the general quadratic form $\Delta(q_1,q_2,t)$. The factor at a single prime $p$ depends on whether $p$ divides the square-free parts of $q_1$ and $q_2$.
	Indeed, we quote the following Lemma, enabling us to bound the sum in Equation \eqref{Rankin1}.
			\begin{lem} \label{switchcher} For any real choice of coefficients $\mbox{\boldmath$\gamma$}$, and natural numbers $q_1,q_2$ and $t$, we have
		\begin{itemize}
			\item\label{posdeftarget} $\Delta(q_1,q_1,1)\left[\mbox{\boldmath$\gamma$}\right]\ge 0.$\\
			\item \label{bound}$\left|\Delta(q_1,q_2,t)\left[\mbox{\boldmath$\gamma$}\right]\right|\le \frac{1}{2} \left(\Delta(q_1,q_1,1)\left[\mbox{\boldmath$\gamma$}\right]+\Delta(q_2,q_2,1)\left[\mbox{\boldmath$\gamma$}\right]\right). $
		\end{itemize}
		\end{lem}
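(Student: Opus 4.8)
The plan is to exploit the multiplicative structure of $\mathcal{G}(1;\cdot,v)$, which by Proposition \ref{1twist} factors as $Cg(n)h(v)$ with $g$ multiplicative, so that $\Delta(q_1,q_2,t)[\mbox{\boldmath$\gamma$}]$ is a bilinear form whose coefficient matrix is a tensor product over primes $p\in P_j$. The first bullet, positive-definiteness of $\Delta(q_1,q_1,1)$, should follow from recognising this quadratic form as (a positive multiple of) a genuine second moment: $\Delta(q_1,q_1,1)[\mbox{\boldmath$\gamma$}]$ is, up to the positive constants $C$ and $h(v)$, the expectation over $d$ of $\bigl|\sum_{w:\ q_1 w=\square}\gamma(w)\chi_d(w)w^{-1/2}\bigr|^2$ times the local densities coming from $\mathcal{G}$, i.e. an integral of a square against a nonnegative measure. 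Concretely, writing $g(p^k)$ in the form $\int \psi_k(\theta)\,d\mu(\theta)$ for a suitable nonnegative measure (the Sato–Tate / local factor interpretation of $L(1,\mathrm{sym}^2 E)\mathcal{G}$), one gets $\Delta(q_1,q_1,1)[\mbox{\boldmath$\gamma$}]=\int\bigl|\sum_w \gamma(w)a_\theta(w)w^{-1/2}\bigr|^2 d\mu(\theta)\ge 0$. Alternatively, and more in the spirit of Appendix \ref{Sec: QF}, one invokes the quadratic-form lemma \ref{splitbound} to reduce to a single prime $p$, where the matrix $\bigl(\beta_p\text{-type entries}\bigr)$ indexed by exponents of $p$ with fixed parity $\xi_p(q_1)$ is checked to be positive semidefinite directly from \eqref{betaf}: the entries $\mathcal{G}_p(1;p^{2i+2i'},v)$ assemble into a moment matrix $\bigl(\int t^{i+i'}d\nu_p(t)\bigr)_{i,i'}$ for a nonnegative measure $\nu_p$, hence a Hankel/Gram matrix, which is positive semidefinite; the tensor product over $p$ of positive semidefinite matrices is positive semidefinite, giving the first bullet.

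The second bullet is then the Cauchy–Schwarz inequality for the positive semidefinite bilinear form $\Delta(\cdot,\cdot,t)$, combined with the elementary bound $|xy|\le\frac12(x^2+y^2)$: one first shows $|\Delta(q_1,q_2,t)[\mbox{\boldmath$\gamma$}]|\le \Delta(q_1,q_1,t)[\mbox{\boldmath$\gamma$}]^{1/2}\Delta(q_2,q_2,t)[\mbox{\boldmath$\gamma$}]^{1/2}$ and then $\le \frac12(\Delta(q_1,q_1,t)[\mbox{\boldmath$\gamma$}]+\Delta(q_2,q_2,t)[\mbox{\boldmath$\gamma$}])$. This requires $\Delta(q,q,t)$ to be nonnegative for every $t$, not just $t=1$; but the presence of the extra factor $t=f^2 x$ only multiplies each local Gram matrix entry by $\mathcal{G}_p(1;p^{2i+v_p(w)+v_p(t)},v)$, i.e. shifts the moment index, which preserves positive semidefiniteness of the Hankel structure (a shifted moment matrix $\bigl(\int t^{i+i'+c}d\nu_p(t)\bigr)$ is still a Gram matrix, against the measure $t^c d\nu_p$). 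So the same reduction via Lemma \ref{splitbound} and the tensor-product argument yields $\Delta(q,q,t)[\mbox{\boldmath$\gamma$}]\ge 0$ for all $t$, after which Cauchy–Schwarz and the $\frac12(x^2+y^2)$ trick close the proof; the final step of replacing $\Delta(q_i,q_i,t)$ by $\Delta(q_i,q_i,1)$ uses that the $t$-dependent local factors are bounded (each $g(p^k)=1+O(1/p)$), absorbing the discrepancy into the implied constant, or simply noting that the statement as used only needs the version with $t$ in place (so one may state it with $\Delta(q_i,q_i,1)$ up to the harmless multiplicative constant $\prod_{p\mid t}(1+O(1/p))$).

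The main obstacle I expect is establishing the positive-semidefiniteness in the first bullet cleanly, i.e. correctly identifying the nonnegative spectral/local measure $\nu_p$ underlying the coefficients $\mathcal{G}_p(1;p^{2i+\cdots},v)$ in \eqref{betaf} so that the single-prime matrices are manifestly Gram matrices. This is a bookkeeping-heavy verification because $\beta_p(w)$ in \eqref{betaf} is an alternating combination of even- and odd-index terms with the $\xi_p$ parity shifts, and one has to check that after grouping by the square-free part $q_i$ the relevant submatrix is exactly a moment matrix; getting the Euler-factor normalisations from Proposition \ref{1twist} consistent with the $a(p)^{2i}/p^i/(2i)!$ weights is where errors are most likely to creep in. Once that local positivity is in hand, Lemma \ref{splitbound} handles the multiplicativity and the rest is Cauchy–Schwarz.
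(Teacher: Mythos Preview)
Your high-level strategy---reduce to a single prime via the multiplicative structure, then use a Cauchy--Schwarz type bound---is sound, and the paper does follow this tensor-product reduction via Lemma~\ref{splitbound}. But two steps in your plan do not go through as written. First, your Cauchy--Schwarz inequality $|\Delta(q_1,q_2,t)|\le\Delta(q_1,q_1,t)^{1/2}\Delta(q_2,q_2,t)^{1/2}$ requires a single positive semidefinite form on a common space containing both the $q_1$-supported and $q_2$-supported vectors. At a prime $p$ where $\xi_p(q_1)\neq\xi_p(q_2)$, the three local matrices underlying $\Delta(q_1,q_2,t)$, $\Delta(q_1,q_1,t)$, $\Delta(q_2,q_2,t)$ are genuinely different, so you are implicitly asserting that the \emph{full} Hankel matrix $(\mathcal G_p(1;p^{v_p(t)+i+j},v))_{i,j\ge0}$, mixing both parities, is positive semidefinite---not just its even or odd sub-blocks. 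Your moment-matrix heuristic does not establish this, and you identify exactly this verification as the main obstacle. Second, your passage from $\Delta(q_i,q_i,t)$ to $\Delta(q_i,q_i,1)$ by ``absorbing into an implied constant'' cannot give the exact inequality the lemma states; it is not an $O$-bound.

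The paper avoids both problems by working with \emph{two} forms at once. Lemma~\ref{splitbound} says that if, locally, $|R_p(\alpha,\phi)|\le\tfrac12(Z_p(\alpha,\alpha)+Z_p(\phi,\phi))$ with $Z_p$ positive semidefinite, then the same holds for the tensor products $R=\bigotimes R_p$, $Z=\bigotimes Z_p$. Here $R_p$ is the local $t$-form and $Z_p$ the local $t=1$ form, so one never needs $R_p$ itself to be positive semidefinite, nor any comparison $\Delta(q_i,q_i,t)\le\Delta(q_i,q_i,1)$ as forms. The local hypothesis is then checked not by an abstract moment-matrix argument but by the very explicit structure of $\mathcal G_p$ from \cite{RS2}: $\mathcal G_p(1;p^{2j},v)$ equals $\theta_1$ for $j=0$ and $\theta_2$ for $j>0$, while $\mathcal G_p(1;up^{2j+d_1+d_2},v)$ equals $\theta_3$ for $j=0$ and $\theta_4$ for $j>0$, with $\theta_1-\theta_2\ge\theta_3-\theta_4$ and $\theta_2\ge\theta_4\ge0$. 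These two-value matrices are handled by the elementary Lemma~\ref{sufconthm}, which writes $Z_p(x,x)=(\theta_1-\theta_2)\lambda_{v_0}^2+\theta_2(\sum\lambda_v)^2$ and similarly for $R_p$, giving both positivity and the exact dominance inequality directly.
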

		\begin{rem}
			The summands defining $\Delta(q_1,q_1,1)$ all have the same square-free part $q_1$, and so essentially have the same twist at any modulus, subject to divisibility conditions. Hence the products of the twists should all be non-negative. In contrast, if $q_1\ne q_2,$ then $\chi_d(q_1)\chi_d(q_2)$ should be positive and negative with roughly equal proportions, so we expect much greater cancellation in $\Delta(q_1,q_2,t).$		It is important to observe that we have preserved any diagonal cancellation in Equation \eqref{bound}, by preserving the sums $\sum_m  \alpha_{p,d_1+2m}$ and $\sum_n \phi_{p,d_2+2n}$ in the form for the diagonal bounds $\Delta(q_1,q_1,1)$ and $\Delta(q_2,q_2,1).$
		\end{rem}
		We prove Lemma \ref{switchcher} in the next subsection and return to the proof of Proposition \ref{Chernprop}.
			We substitute the bounds on the mixed terms from Lemma \ref{switchcher} into Equation \eqref{Rankin1}.
		Hence, the contribution of the additional terms in the mollifier may be bounded as
		\begin{equation}\label{Rankin2}\begin{split}
				\le e^{-\rho(l_j-l_{j+1})^{10^5}} \sum_{\substack{p|q_1\implies p\in P_{j}\\ q_1\text{  squarefree}}}\Delta(q_1,q_1,1)  \sum_{\substack{p|q_2\implies p\in P_{j}\\ q_2\text{  squarefree}}}\frac{1}{(q_1q_2)^{\frac{1}{2}}}\sum_{p|t\implies p\in P_j}\frac{e^{\rho\Omega(t)} e_t}{t^{\frac{1}{2}}}
				\left|\frac{a\left(\frac{xy}{(x,y)^2}\right)}{\left(\frac{xy}{(x,y)^2 }\right)^{\frac{1}{2}}}\right|. 
			\end{split}
		\end{equation}
		We may  write the sum over $t$ as an Euler product as in the proof of Proposition 5 in \cite{RS2}, to express it as:		
		\begin{equation}\prod_{p\in P_{j}}\phi_p(y),\end{equation} where
		\begin{equation}
			\phi_p(y)=\begin{cases}
				\sum^\infty_{j=0} \frac{a(p)^{2j}}{p^j} \frac{e^{2\rho j}}{(2j)!}+\sum^{\infty}_{j=0} \frac{a(p)^{2j+2}}{p^{j+1}}\frac{e^{\rho(2j+1)}}{(2j+1)!}&p\nmid y\\
				\sum^\infty_{j=0} \frac{a(p)^{2j+1}}{p^{j+\frac{1}{2}}} \frac{e^{2\rho j}}{(2j)!}+\sum^{\infty}_{j=0} \frac{a(p)^{2j+1}}{p^{j+\frac{1}{2}}}\frac{e^{\rho(2j+1)}}{(2j+1)!}&p|y\\			
			\end{cases}	.	\end{equation}
We may easily handle the product following the same method as in \cite{RS2} to show it is uniformly bounded by:

\begin{equation}
	\exp\left(e^{2\rho}\sum_{p\in P_{j}} \frac{a(p)^2}{p}\right)\frac{e^{\rho\Omega(y)}2^{\Omega(y)}}{y^{\frac{1}{2}}}.
\end{equation}

By the restriction on the number of prime factors of $q_1$ and $q_2$, this is 

\begin{equation}\label{psibound}
	\le 		\exp\left(e^{2\rho}\sum_{p\in P_{j}} \frac{a(p)^2}{p}\right)\frac{e^{40\rho(l_j-l_{j+1})^{10^4}}}{y^{\frac{1}{2}}}.
\end{equation}		
		Substituting this bound for the sum over $t$ into Equation \eqref{Rankin2}, we see the internal sum over $q_2$ may now be bounded as
\begin{equation}\le \exp\left(e^{2\rho}\sum_{p\in P_{j}} \frac{a(p)^2}{p}+40\rho(l_j-l_{j+1})^{10^4}\right)\prod_{p\in P_{j}} \begin{cases}
		1+\frac{2}{p}+O\left(\frac{1}{p^2}\right)& p\nmid q_1\\
		\frac{2}{p}+O\left(\frac{1}{p^2}\right)& p\mid q_1
	\end{cases}.\end{equation}
Since $q_1$ is square-free and has at most $10(l_j-l_{j+1})^{10^4}$ prime factors, the above is bounded by:
\begin{equation}
	\frac{1}{q_1} \exp\left(e^{2\rho}\sum_{p\in P_{j}} \frac{a(p)^2}{p}+(40\rho+10)(l_j-l_{j+1})^{10^4}\right)\prod_{p\in P_{j}}1+\frac{2}{p}+O\left(\frac{1}{p^2}\right). 
\end{equation}	
Simple applications of the Prime Number Theorem to handle the product and of Lemma 3 in \cite{RS2} to handle the term $\sum_{p\in P_{j}} \frac{a(p)^2}{p},$ now shows this may be bounded by:

\begin{equation}
	\frac{1}{q_1} \exp\left((40\rho+10)(l_j-l_{j+1})^{10^4}+(e^{2\rho}+3)(n_j-n_{j-1}+O(1))\right). 
\end{equation}

We substitute this bound for the sum over $q_1$ into Equation \eqref{Rankin2} to show the total contribution of the extra terms with too many prime factors to appear in the mollifier may be bounded by:

\begin{align}&\exp\left(-\rho(l_j-l_{j+1})^{10^5}+(40\rho+10)(l_j-l_{j+1})^{10^4}+(e^{2\rho}+3)(n_j-n_{j-1}+O(1))\right)\times\nonumber\\ &\label{casesplitchern}\sum_{\substack{p|q_1\implies p\in P_{j}\\ q_1\text{  squarefree}}}\frac{\Delta(q_1,q_1,1)}{q_1}.\end{align}
If $j>1,$ then \begin{equation}
n_j-n_{j-1}=\log l_{j-1}-\log l_j\le l_{j}-l_{j+1}.\end{equation} For the last inequality, we must further subdivide into the cases $j=2$ and $j>2$.
 Taking $\rho=1000$, and recalling that we truncate such that $l_j-l_{j+1}\ge 10^4$ for all $j\le R$, we see the above is

\begin{equation}\le \exp\left(-99(l_{j+1}-l_{j})^2\right)\sum_{\substack{p|q_1\implies p\in P_{j}\\ q_1\text{  squarefree}}}\sum_{q_1w_1,q_1w_2=\square} \frac{\gamma(w_1)\gamma(w_2)}{(w_1w_2)^{\frac{1}{2}}} \prod_{p\in P_{j}} \beta_p(w_1w_2),\end{equation} as required. 

It remains to consider the case $j=1$. We proceed from Equation \eqref{casesplitchern}, and using that $n_1-n_0=\log\log X-2\log_3 X+O(1)$, whilst $l_0-l_1= 200\log\log X.$ Again  taking $\rho=1000$, we see the contribution of the extra terms is

\begin{equation}
	\ll \exp\left(-(\log\log X)^{10^5}\right)\sum_{\substack{p|q_1\implies p\in P_{1}\\ q_1\text{  squarefree}}}\sum_{q_1w_1,q_1w_2=\square} \frac{\gamma(w_1)\gamma(w_2)}{(w_1w_2)^{\frac{1}{2}}} \prod_{p\in P_{1}} \beta_p(w_1w_2),
\end{equation}  
which is completely negligible for large $X$. This completes the proof of Lemma \ref{Chernprop}.\end{proof}
Having proven Lemma \ref{Chernprop}, we can now return to the proof of Lemma \ref{twistmoll}, to bound the unrestricted sum.
\begin{proof}[Proof of Lemma \ref{twistmoll}] Now that we have lifted the restriction on the number of prime factors of the support of the sum of the coefficients, it remains to give an expression for $\tilde{\mathscr{N}}_{j}$.

	Returning to Equation \eqref{newN} and using Lemma \ref{Chernprop} to remove the truncation on the coefficients of the mollifier, we may write \begin{align}
			\tilde{\mathscr{N}}_j&=\sum_{\substack{p|q_1q_2\implies p\in P_{j}\\ q_1,q_2\text{ square-free}}}\left|\sum_{\substack{p|w_1w_2\implies p\in P_{j}\\ w_1q_1,w_2q_2=\square}}
			\frac{\gamma(w_1)\gamma(w_2)}{(w_1w_2)^{\frac{1}{2}}} \prod_{p\in P_{j}} 	\beta_p(w_1w_2)\right|+\\&\nonumber O\left(\exp(-99(l_{j+1}-l_{j})^2)\sum_{\substack{p|q_1\implies p\in P_{j}\\ q_1\text{  squarefree}}}\sum_{q_1w_1,q_1w_2=\square} \frac{\gamma(w_1)\gamma(w_2)}{(w_1w_2)^{\frac{1}{2}}} \prod_{p\in P_{j}} 	\beta_p(w_1w_2)\right).		\end{align}		

	The main term is bounded below by the diagonal terms with $q_1=q_2,$ which we recognise from the error term. Hence, we may make the error multiplicative to write 
	\begin{equation}\label{Multerrorn}	\tilde{\mathscr{N}}_j=\left(1+O\left(\exp(-99(l_{j+1}-l_{j})^2)\right)\right)\sum_{\substack{p|q_1q_2\implies p\in P_{j}\\ q_1,q_2\text{square-free}}}\left|\sum_{\substack{p|w_1w_2\implies p\in P_{j}\\ w_1q_1,w_2q_2=\square}}
			\frac{\gamma(w_1)\gamma(w_2)}{(w_1w_2)^{\frac{1}{2}}} \prod_{p\in P_{j}} 	\beta_p(w_1w_2)\right|. 			\end{equation}
		The sum consists of the diagonal terms where $q_1=q_2$ and the off-diagonal terms  where  $q_1\ne q_2$.
		We use Lemma \ref{splitbound} to bound the off-diagonal terms in term of the diagonal contribution, and hence show that the diagonal terms are dominant.
		
		We follow the proof of Proposition \ref{Chernprop}, and use much of the same notation.
		In order to show the off-diagonal forms are dominated by the diagonal terms, we show that for every additional prime for which $v_p(q_1)\ne v_p(w_2)$ mod $2$, the contribution of the cross term essentially gets smaller by a factor of $\frac{\beta_p(p)}{\beta_p(p^2)}=O(p^{-\frac{3}{2}}).$ This indeed means the diagonal terms with square product are dominant.
		We make this precise in the following Lemma. The proof is deferred to the next subsection.
		\begin{lem}\label{freebound}
			Let $q_1$ and $q_2$ be square-free natural numbers with prime factors contained in $P_{j}.$ Then \begin{align}
					\nonumber&	\left|\sum_{\substack{p|w_1w_2\implies p\in P_{j}\\ w_1q_1,w_2q_2=\square}}
					\frac{\gamma(w_1)\gamma(w_2)}{(w_1w_2)^{\frac{1}{2}}} \prod_{p\in P_{j}} 	\beta_p(p^{\xi_p(q_1q_2)})\right|\le\frac{1}{2}\prod_{p\in P_j} \exp\left(O\left(p^{-\frac{3}{2}}\right)-\frac{a(p)^2}{2p}\right) \prod_{\substack{r\in P_j\\ \xi_r(q_1q_2)=1}}s_r\\& \left(\sum_{\substack{p|w_1w_1'\implies p\in P_{j}\\ w_1q_1,w_1'q_1=\square}}
					\frac{\gamma(w_1)\gamma(w_1')}{(w_1w_1')^{\frac{1}{2}}} \prod_{p|w_1w_1'} 	\left(1-\frac{1}{p}\right)+\sum_{\substack{p|w_2w_2'\implies p\in P_{j}\\ w_2q_2,w_2'q_2=\square}}
					\frac{\gamma(w_2)\gamma(w_2')}{(w_2w_2')^{\frac{1}{2}}} \prod_{p|w_2w_2'} 	\left(1-\frac{1}{p}\right)\right),
			\label{noabs}		
			\end{align}	where $s_r=O\left(r^{-\frac{3}{2}}\right).$
		\end{lem}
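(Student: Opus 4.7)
The plan is to exploit the local-global split of the sum: the Euler product $\prod_{p \in P_j} \beta_p$ and the orthogonality condition $w_i q_i = \square$ both decompose prime by prime, so the off-diagonal form in the $\gamma$'s can be bounded locally at each $p \in P_j$. First I would invoke Lemma \ref{splitbound} (the quadratic-form splitting result of Appendix \ref{Sec: QF}), in analogy with its use in the proof of Lemma \ref{switchcher}, to reduce the bound on the global off-diagonal form to a product of local bounds. This sidesteps the non-multiplicativity of the coefficients $\gamma(w)$ by working through the multiplicative structure supplied by the $\beta_p$.

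At each prime $p \in P_j$, I would distinguish two cases according to $\xi_p(q_1 q_2)$. When $\xi_p(q_1 q_2) = 0$, an application of Cauchy--Schwarz at the level of the local inner sums, followed by comparing the local off-diagonal Euler factor to the corresponding local diagonal factor, produces the expected $(1 - 1/p)$ contribution on the right-hand side of Equation \eqref{noabs} with no extra saving. When $\xi_p(q_1 q_2) = 1$, the local cross-factor carries an additional factor of $a(p)/p^{1/2}$ arising from the odd-degree piece of Equation \eqref{betaf}, and the ratio to the two diagonal local factors delivers the claimed saving $s_p = O(p^{-3/2})$, matching the heuristic $\beta_p(p)/\beta_p(p^2) = O(p^{-3/2})$ mentioned in the text preceding the statement.

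I would then multiply the local bounds back together. The $p^{-3/2}$ savings at mismatched primes assemble precisely into $\prod_{r \in P_j,\, \xi_r(q_1 q_2) = 1} s_r$. The global factor $\prod_{p \in P_j} \exp(O(p^{-3/2}) - a(p)^2/(2p))$ emerges from comparing the bulk Euler product of $\beta_p$ to the normalising products $\prod_{p | w_1 w_1'} (1 - 1/p)$ and $\prod_{p | w_2 w_2'} (1 - 1/p)$ that appear in the two diagonal sums on the right of Equation \eqref{noabs}; this comparison uses the expansion \eqref{betaf} together with $\mathcal{G}_p(1; p^{2i}, v) = 1 + O(1/p)$ from Proposition \ref{1twist} and the Hasse bound $|a(p)| \le 2$. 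A final AM-GM inequality converts the geometric mean of the two diagonal sums into the arithmetic mean displayed in Equation \eqref{noabs}, yielding the factor of $1/2$.

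The main obstacle is the careful placement of the Cauchy--Schwarz and AM-GM steps: they must act only at the split between the $w_1$-supported and $w_2$-supported variables, and never summand-by-summand within a single diagonal form, so as to preserve the internal cancellation that makes the two diagonal quadratic sums in Equation \eqref{noabs} the correct objects to compare against. This mirrors the concern highlighted in the remark after Lemma \ref{switchcher}, and is precisely what permits the full per-prime saving of $O(p^{-3/2})$, rather than a weaker $O(p^{-1/2})$, to be harvested at each mismatched prime.
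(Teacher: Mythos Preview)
Your outline matches the paper's proof: localize via Lemma~\ref{splitbound}, split cases by $\xi_p(q_1q_2)$, apply Cauchy--Schwarz locally, and read off the asymptotics of $\beta_p$ from \eqref{betaf}. Two points, however, need sharpening. First, your attribution of the saving $s_p=O(p^{-3/2})$ to ``an additional factor of $a(p)/p^{1/2}$'' is incomplete: that factor alone yields only $O(p^{-1/2})$. The extra $O(p^{-1})$ comes from cancellation between the two leading terms of \eqref{betaf} at $w=p$, namely $\tfrac{a(p)}{p^{1/2}}\mathcal{G}_p(1;p,v)$ and $\tfrac{a(p)}{p^{1/2}}\mathcal{G}_p(1;p^{2},v)$, whose difference is $O(p^{-3/2})$ since both $\mathcal{G}_p$-values are $1+O(1/p)$; you will see this upon expanding carefully, but the heuristic as you stated it would lead you to the wrong exponent. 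Second, in the mismatched case the clean local Cauchy--Schwarz step depends on the observation that $\beta_p(p^{2j+1})$ is \emph{independent of $j\ge 0$} (a consequence of $\mathcal{G}_p(1;p^{k},v)$ being constant for $k\ge 1$), so that the local bilinear form factors as $\beta_p(p)\bigl(\sum_m\alpha_m\bigr)\bigl(\sum_n\phi_n\bigr)$; without this constancy there is no obvious positive-definite form to which Cauchy--Schwarz applies. Finally, Lemma~\ref{splitbound} already outputs the arithmetic-mean bound globally once its hypothesis is verified prime by prime, so your AM--GM belongs inside the local step, not after ``multiplying local bounds together''.
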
 
		\begin{rem} We will use that the sums on the right-hand side of Equation \eqref{noabs} are non-negative for any choice of $q_1$ and $q_2$.
			Moreover, the error term $\prod_{ \xi_r(q_1q_2)=1} 
		s_r$ is $1$ if $q_1=q_2$ and effectively means the contribution of the off-diagonal terms with distinct square-free parts is negligible. \end{rem}
		We now are in a position to conclude the proof of Lemma \ref{twistmoll}.	We return to Equation \eqref{Multerrorn} and use Lemma \ref{freebound} to bound the sum. 			
		Performing the sum over $q_2$ we may rewrite the right-hand side of Equation \eqref{Multerrorn} as   

	\begin{equation}
		\exp\left(O(l_{j+1}^{-\frac{1}{2}}-l_{j}^{-\frac{1}{2}})\right)\prod_{p\in P_j}\exp\left(-\frac{a(p)^2}{2p}\right)\sum_{\substack{p|q_1\implies p\in P_{j}\\ q_1\text{ square-free}}}\sum_{\substack{p|w_1w_1'\implies p\in P_{j}\\ w_1q_1,w_1'q_1=\square}}
		\frac{\gamma(w_1)\gamma(w_1')}{(w_1w_1')^{\frac{1}{2}}} \prod_{p|w_1w_1'} 	\left(1-\frac{1}{p}\right).
	\end{equation}
	Hence we obtain \begin{equation}
		\prod^r_{j=1}\tilde{\mathscr{N}}_{j}\ll \prod_{p\le X_r} \exp\left(-\frac{a(p)^2}{2p}\right)\sum_{\substack{p|q\implies p\le X_r\\ q\text{ square-free}}}\sum_{\substack{p|u_1u_2\implies p\le X_r\\ u_1q,u_2q=\square}} \frac{C(u_1)C(u_2)}{(u_1u_2)^{\frac{1}{2}}}\prod_{p|u_1u_2}\left(1-\frac{1}{p
		}\right).
	\end{equation}
	We may use Lemma 3 from \cite{RS2} to handle the product over primes $p\le X_r$, and see \begin{equation}\prod_{p\le X_r} \exp\left(-\frac{a(p)^2}{2p}\right)\ll \log^{-\frac{1}{2}}X_r.\end{equation} This completes the proof of Lemma \ref{twistmoll}, and so we may conclude the proof of Proposition \ref{twistmollprop}.

	\end{proof}
	\subsection{Proofs of Lemmata \ref{switchcher} and  \ref{freebound}}\label{Lemsec}
	In this subsection, we prove the necessary bounds on the off-diagonal terms with respect to the diagonal terms used for the proofs of Lemmata \ref{Chernprop} and \ref{twistmoll}. We view the diagonal and off-diagonal terms as quadratic forms on the coefficients, and use simple results on the theory from quadratic forms proven in Appendix \ref{Sec: QF}.
	\begin{proof}[Proof of Lemma \ref{switchcher}]
		We need to show the diagonal terms $\Delta(q_1,q_1,1)$ are non-negative definite quadratic form, and dominate the off-diagonal terms $\Delta(q_1,q_2,u).$ We utilise  Lemma \ref{splitbound} to simplify the proof.
		Here, the individual vector spaces are $\{V_p: p\in P_j\},$ where $$V_p=\text{span}\left\{\frac{\chi_d(p^n)}{p^{\frac{n}{2}}}: n\ge 0\right\}.$$ The dominant form $Z_p$ on $V_p$
		is given by the restriction of the sum: \begin{equation}\sum_{\substack{p|w_1w_2\implies p\in P_{j}\\ w_1r_1,w_2r_2=\square}}\frac{\gamma(w_1)\gamma(w_2)}{(w_1w_2)^{\frac{1}{2}}} \mathcal{G}(1;w_1w_2,v)\end{equation}
		to the case where $r_1,r_2\in\{1,p\},$ while the form $R_p$ is given by the restriction of: \begin{equation}\sum_{\substack{p|w_1w_2\implies p\in P_{j}\\ w_1r_1,w_2r_2=\square}}\frac{\gamma(w_1)\gamma(w_2)}{(w_1w_2)^{\frac{1}{2}}} \mathcal{G}(1;tw_1w_2,v).\end{equation} 
		
		Finally, $\alpha_{p,i}$ is supported  on prime powers with square-free part $p^{v_p(q_1)}$ and $\phi_{p,i}$ is supported  on prime powers with square-free part $p^{v_p(q_2)}.$ 
		
		For any choice of $t$, the coefficients of the quadratic form $\Delta(q_1,q_2,t)$  split  as an Euler product depending on the prime factorisations of $q_1$ and $q_2$. Hence, it suffices to show the positive definite and boundedness condition restricted to twists at powers of any fixed prime $p\in P_{j}$.
		
		That is, we put $\tau=v_p(q_1q_2),\xi_0=2\{\frac{\tau}{2}\}$ and $ d_j=2\left\{\frac{v_p(q_j)}{2}\right\}$ for $j=1,2$. Then if   $(\alpha_{p,i})$ and $(\phi_{p,i})$ are any choice of coefficients for the twists at the powers of $p$ with support on parities matching  $d_1$ and $d_2$ respectively, firstly we need to show that the form $\Delta(q_1,q_1,1)$ is positive definite, i.e. \begin{equation}\label{posdef}
			\sum^{\infty}_{m,n=0}\alpha_{p,d_1+2m}\alpha_{p,d_1+2n}  \mathcal{G}_p(1;p^{2m+2n},v)\ge 0.\end{equation}
		
		Secondly, to apply Lemma \ref{splitbound}, we need Equation \eqref{onedom}  to be satisfied. That is, we need to show the dominance of the forms:
		
		\begin{align} \label{mix}
			&\nonumber\left|\sum^{\infty}_{m,n=0}\alpha_{p,d_1+2m}\phi_{p,d_2+2n}  \mathcal{G}_p(1;up^{d_1+d_2+2m+2n},v)
			\right|\le\\& \frac{1}{2} \left(\left|\sum^{\infty}_{m,n=0}\alpha_{p,d_1+2m}\alpha_{p,d_1+2n}  \mathcal{G}_p(1;p^{2m+2n},v)
			\right|+\left|\sum^{\infty}_{m,n=0}\phi_{p,d_2+2m}\phi_{p,d_2+2n}  \mathcal{G}_p(1;p^{2m+2n},v)
			\right|\right).
		\end{align}
		Here, we rescaled the coefficients of the twist to cancel the $(w_1w_2)^{\frac{1}{2}} 
		$ factor for simplicity. 
		We begin with the proof of Equation \eqref{posdef}. Taking the product of Equation \eqref{mix} over all $p\in P_j$ and using Equation \eqref{mixdom} from Lemma \ref{splitbound} will then complete the proof of Lemma \ref{switchcher}.
		
		From the expressions for $\mathcal{G}_p(1;p^n,v)$ in Section 10 of \cite{RS2}, we see that there exists  values $\theta_1\ge \theta_2>0$ and $\theta_3\ge\theta_4>0 $ with $\theta_1-\theta_2\ge \theta_3-\theta_4,$ $\theta_2 \ge \theta_4$, such that
		\begin{equation}\label{pure}\mathcal{G}_p(1; p^{2j};v)=\begin{cases} \theta_1& j=0\\
				\theta_2 &j>0\end{cases},
		\end{equation}
		while
		\begin{equation}\label{twistfactor}\mathcal{G}_p(1; up^{2j+d_1+d_2};v)=\begin{cases} \theta_3& j=0\\
				\theta_4 &j>0\end{cases}.
		\end{equation}
		This essentially reflects the observation that if $d$ is any twist, then $\chi_d(p^{2n}r)=\chi_d(r),$  whenever $p|r$. Moreover, in the case in Equation \eqref{pure} where $r=p^{2j},$ this value will always be $+1$ if $(d,p)=1$. In contrast, in the general case in Equation \eqref{twistfactor}, the sum may be smaller, due to there being fewer twists where $(d,pr)=1$. 	
		
		Then appealing to Lemma \ref{sufconthm}, we see that the form restricted to powers of a given prime is non-negative definite by Equation \eqref{thmdef}, whilst Equation \eqref{onedom} is ensured by Equation \eqref{thmdom}.	Now we may conclude the proof of Lemma \ref{switchcher}.  		
	\end{proof}
	The proof of Lemma \ref{freebound} is similar, and we follow the same method
	\begin{proof}[Proof of Lemma \ref{freebound}]
		The proof closely follows that of Lemma \ref{switchcher}, and we use much of the same notation. Again, using Lemma \ref{splitbound}, it suffices to reduce to the case where the coefficients are supported on the power of a single prime, $p$. That is, we first must show the diagonal term is non-negative definite and that for any choice of coefficients $\alpha_{p,d_1+2i}$ and $\phi_{p,d_2+2n}$ that
		\begin{align}\nonumber&
			\left|\sum_{m,n\ge 0} \alpha_{p, 2i+d_1}\phi_{p,2n+d_2}\beta_p(p^{2i+2n+d_1+d_2})\right|\le \frac{1}{2}\frac{\beta_p(p^{d_1+d_2})}{\beta_p(p^{2(d_1+d_2)})}\times \\&\sum_{i,j\ge 0} \alpha_{p, d_1+2i}\alpha_{p, d_1+2j} \beta_p(p^{2(d_1+i+j)})+\phi_{p, d_2+2i}\phi_{p, d_2+2j} \beta_p(p^{2(d_2+i+j)}).\label{target}
		\end{align}
		We will then bound this diagonal sum on the right-hand side of \eqref{target} to complete the proof of Lemma \ref{freebound}.
		We begin by calculating the terms $\beta_p(p^{2i+2n+d_1+d_2})$.	
		For the diagonal case, since $\theta_1\ge \theta_2\ge 0$, we can easily check from Equation \eqref{betaf} that $\beta_p(p^0)\ge \beta_p(p^2)\ge 0$ and that $\beta_p(p^{2j})=\beta_p(p^2)\forall j\ge 1$. From here, it immediately follows by Lemma \ref{sufconthm} that the diagonal terms are non-negative definite, and that we have Equation \eqref{target} in the diagonal case where $d_1=d_2.$

		We define \begin{equation}\theta_5=\mathcal{G}_p(1;p^{1+d_1+d_2},v),\end{equation} and note that $\theta_5=1+O\left(\frac{1}{p}\right)$.
		Then \begin{equation}\label{evalbeta}
			\beta_p(p^{2j+d_1+d_2})=(\theta_3-\theta_4)f_j +\theta_4\sum^{\infty}_{i=0}  \frac{a(p)^{2i+\xi_{p}(q_1q_2)}}{p^{i+\frac{\xi_p(q_1q_2)}{2}}(2i)!}-\theta_5\sum^\infty_{i=1}\frac{a(p)^{2i-\xi_{p}(q_1q_2)}}{p^{i-\frac{\xi_p(q_1q_2)}{2}}(2i-1)!}, 
		\end{equation} where \begin{equation}
			f_j=\mathbf{1}(j=0)\frac{a(p)^{d_1+d_2}}{p^{\frac{\xi_p(q_1q_2)}{2}}},
		\end{equation}
		and we recall $\xi_p(q_1q_2)=2\left\{\frac{d_1+d_2}{2}\right\}.$
		In the off-diagonal case where $d_1\ne d_2$, we see $\theta_3=\theta_4,$ and hence $\beta_p(p^{2j+d_1+d_2})=\beta_p(p)$ for all $j$.
		
		By the Cauchy-Schwarz inequality, we hence see
		\begin{align}
			&\nonumber	\sum_{i,j\ge 0} \alpha_{p, 2i+d_1}\phi_{p,2j+d_2}\beta_p(p^{2i+2j+d_1+d_2})\le \frac{1}{2}\beta_p(p)\sum_{i,j\ge 0} \alpha_{p, d_1+2i}\alpha_{p, d_1+2j} +\phi_{p, d_2+2i}\phi_{p,d_2+2j} \\
			&\le \frac{1}{2}\frac{\beta_p(p)}{\beta_p(p^2)}\sum_{i,j\ge 0} \alpha_{p, d_1+2i}\alpha_{p, d_1+2j}\beta_p(p^{2(d_1+i+j)}) +\phi_{p, d_2+2i}\phi_{p,d_2+2j}\beta_p(p^{2(d_2+i+j)}),  \label{interdiag}
		\end{align} 
		where the last inequality follows since only the terms with $d_1+2i=d_1+2j=0$ or\\ $d_2+2i=d_2+2j=0$ will have their coefficients change, and these coefficients will increase. This completes the proof of Equation \eqref{target} in the case $d_1\ne d_2$, bounding the off-diagonal terms in relation to the diagonal terms. 
		
		It now remains to bound the diagonal terms on the right-hand side of Equation \eqref{target}.
		Here, we see that
  \begin{equation}\theta_4=\left(1-\frac{1}{p}\right)\exp\left(O\left(p^{-\frac{3}{2}}\right)\right),\end{equation} and \begin{equation}\label{theta4}
			\theta_3=\begin{cases}
				\theta_4&d_1=1\\
				\theta_1&d_1=0
			\end{cases}.
		\end{equation}
		Returning to Equation \eqref{evalbeta}, we see that the final sum in Equation \eqref{evalbeta} is $\frac{a(p)^2}{p}+O(\frac{1}{p^2}).$ The bound in Equation \eqref{target} depends on the values of $d_1$ and $d_2$; we consider first the bounds where $d_1=d_2.$ When $d_1=0$, when the terms $i=j=0$ have a greater coefficient than when $\max\{i,j\}>0$, or $d_1=1,$ when all the terms have the same coefficient. 			
		If $d_1=0,$ then we see $\theta_1=\exp\left(O\left(p^{-\frac{3}{2}}\right)\right)$. Then Equations \eqref{evalbeta} and \eqref{theta4} yield that \begin{align}
			\beta_p(p^0)&\nonumber= (\theta_1-\theta_4) (1) +\theta_4\left(1+\frac{a(p)^2}{2p}\right)- \frac{a(p)^2}{p}+O\left(\frac{1}{p^2}\right)\\
			&= \exp\left(-\frac{a(p)^2}{2p}\right)\exp\left(O\left(p^{-\frac{3}{2}}\right)\right).
		\end{align}
		
		Meanwhile, if $j\ge 1$ then \begin{align}
			\beta_p(p^{2j})=\beta_p(p^2)=\theta_4\left(1+\frac{a(p)^2}{2p}\right)- \frac{a(p)^2}{p}+O\left(\frac{1}{p^2}\right)&=
			\left(1-\frac{1}{p}\right)\exp\left(-\frac{a(p)^2}{2p}\right)\exp\left(O\left(p^{-\frac{3}{2}}\right)\right).
		\end{align}
		
		Hence, for the case where $d_1=0$, we see 
		\begin{equation}
		\sum_{i,j\ge 0} \alpha_{p, d_1+2i}\alpha_{p, d_1+2j}\beta_p(p^{2(d_1+i+j)}) =\exp(O(p^{-\frac{3}{2}}))\exp\left(-\frac{a(p)^2}{2p}\right)\left(\alpha_{p,0}^2 +\left(1-\frac{1}{p}\right)\left(\sum_{i+j\ge 0} \alpha_{p,i}\alpha_{p,j}\right)\right).
		\end{equation}
				
		It now remains to consider the case $d_1=1$. Here we see that Equations \eqref{evalbeta} and \eqref{theta4} give that, for all $j\ge 0$
		\begin{align}\label{d=0}
		\beta_p(p^{2j+d_1+d_2})=\beta_p(p^2)&=\theta_4 \left(1+\frac{a(p)^2}{2p}\right)-\frac{a(p)^2}{p}+O\left(\frac{1}{p^2}\right)\\
		&=\left(1-\frac{1}{p}\right)\exp\left(-\frac{a(p)^2}{2p}\right)\exp\left(O\left(p^{-\frac{3}{2}}\right)\right).
		\end{align}
		Hence, 
	\begin{equation}\label{d=1}
			\sum_{i,j\ge 0} \alpha_{p, d_1+2i}\alpha_{p, d_1+2j}\beta_p(p^{2(d_1+i+j)}) =\left(1-\frac{1}{p}\right)\exp\left(-\frac{a(p)^2}{2p}\right)\exp\left(O\left(p^{-\frac{3}{2}}\right)\right)\left(\sum_{i}\alpha_{p,1+2i}\right)^2.
	\end{equation}	
	We again remark that the extra factor of $1-\frac{1}{p}$ that multiplies twists at multiples of $p$ comes from the fact that $\chi_d(1)=1$ for all d, whilst for $j\ge 0,$ $\chi_d(p^{2j})=1$ for a proportion $1-\frac{1}{p}$ of the twists, and is $0$ for the remaining twists where $p|d$. 
	
	The combination of Equations \eqref{target} with \eqref{d=0} and \eqref{d=1}, we obtain upper bounds for $$\sum_{i,j\ge 0} \alpha_{p, d_1+2i}\alpha_{p, d_1+2j} \beta_p(p^{2(d_1+i+j)})+\phi_{p, d_2+2i}\phi_{p, d_2+2j} \beta_p(p^{2(d_2+i+j)}),$$ depending on the values of $d_1$ and $d_2$.
	Moreover, if $d_1\ne d_2$, then \begin{equation}\label{offdiagbeta}
		\frac{\beta_p(p^{d_1+d_2})}{\beta_p(p^{2(d_1+d_2)})}=O\left(p^{-\frac{3}{2}}\right),
	\end{equation} while if $d_1=d_2,$ then $\beta_p(p^{d_1+d_2})=\beta_p(p^{2(d_1+d_2)}).$
	Combining these bounds with Lemma \ref{splitbound}, we get bounds on the composite quadratic form at twists over all integers with prime factors lying in $P_j$. Hence, we see \begin{align}
	\nonumber&	\left|\sum_{\substack{p|w_1w_2\implies p\in P_{j}\\ w_1q_1,w_2q_2=\square}}
		\frac{\gamma(w_1)\gamma(w_2)}{(w_1w_2)^{\frac{1}{2}}} \prod_{p\in P_{j}} 	\beta_p(p^{\xi_p(q_1q_2)})\right|\le\frac{1}{2}\prod_{p\in P_j} \exp\left(O\left(p^{-\frac{3}{2}}\right)-\frac{a(p)^2}{2p}\right) \prod_{\substack{r\in P_j\\ \xi_r(q_1q_2)=1}}\frac{\beta_r(r)}{\beta_r(r^2)}\\& \left(\sum_{\substack{p|w_1w_1'\implies p\in P_{j}\\ w_1q_1,w_1'q_1=\square}}
		\frac{\gamma(w_1)\gamma(w_1')}{(w_1w_1')^{\frac{1}{2}}} \prod_{p|w_1w_1'} 	\left(1-\frac{1}{p}\right)+\sum_{\substack{p|w_2w_2'\implies p\in P_{j}\\ w_2q_2,w_2'q_2=\square}}
		\frac{\gamma(w_2)\gamma(w_2')}{(w_2w_2')^{\frac{1}{2}}} \prod_{p|w_2w_2'} 	\left(1-\frac{1}{p}\right)\right).
	\end{align}
	Setting $s_r=\frac{\beta_r(r)}{\beta_r(r^2)}$ and using Equation \eqref{offdiagbeta} completes the proof of Lemma
		 \ref{freebound}. 
	\end{proof}
	\appendix
\section{Moments over quadratic twists}\label{Sec: moments}
In this section, we adapt proofs from \cite{AB24} and \cite{AC25} to the context of moments in the random model of quadratic twists. The following Lemma allows us to take moments of sections of the random walk, inspired by the proof of Lemma 3 in \cite{Sound}. Here we must modify the proof to allow for the diagonal terms including all those with a square product.
\begin{lem}\label{walkmom}
	For any integers $1\le j< k\le R,$ and $r\le {100l_k^2},$ 
	we have 	\begin{equation}
		\E\left[\left|S_{k}-S_{j}\right|^{2r}\right]\ll \frac{(2r)!}{2^rr!}\left(n_k-n_{j}+O\left(\frac{1}{\log X_j}\right)\right)^r. 
	\end{equation}
\end{lem}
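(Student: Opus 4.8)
The plan is to expand the $2r$-th power of the increment $S_k - S_j = \sum_{p \in (X_j, X_k]} \chi_d(p)a(p)p^{-1/2}$ into a sum over $2r$-tuples of primes, take the expectation term by term using the quadratic-twist orthogonality (the $Q=1$ case of Proposition \ref{1twist}, or its immediate consequence $\E[\chi_d(n)] \ll \mathbf{1}(n=\square) + \text{error}$), and then show that the dominant contribution comes from the terms where the primes pair up perfectly. Concretely, writing $\E\left[\left|S_k - S_j\right|^{2r}\right] = \sum_{p_1,\dots,p_{2r}} \frac{a(p_1)\cdots a(p_{2r})}{(p_1\cdots p_{2r})^{1/2}} \E[\chi_d(p_1\cdots p_{2r})]$, the key point specific to this quadratic setting — flagged in the Lemma's preamble — is that $\E[\chi_d(m)]$ is not merely supported on $m$ a perfect square in the naive sense: it is essentially $\prod_{p|m}(1-1/p)$ when $m$ is a square and negligible otherwise, so the "diagonal" consists of all tuples $(p_1,\dots,p_{2r})$ whose product is a square, i.e.\ every prime appears an even number of times.

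First I would isolate the main term: the tuples where the multiset $\{p_1,\dots,p_{2r}\}$ consists of $r$ distinct primes each appearing exactly twice. The number of ways to assign $2r$ ordered slots to such a configuration is $\frac{(2r)!}{2^r r!}$ times the choice of the $r$ primes, and summing $\prod \frac{a(p_i)^2}{p_i}$ over $r$-subsets of primes in $(X_j,X_k]$ gives, up to the standard $\frac{1}{r!}$ overcounting already absorbed, a factor $\left(\sum_{X_j < p \le X_k} \frac{a(p)^2}{p}\right)^r$. By Lemma 3 in \cite{RS2} (or Mertens-type estimates for $a(p)^2$, which on average is $1$), this sum is $n_k - n_j + O(1/\log X_j)$, yielding exactly the claimed leading term $\frac{(2r)!}{2^r r!}(n_k - n_j + O(1/\log X_j))^r$. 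Second I would bound the remaining "higher-multiplicity" diagonal tuples — those where some prime appears with multiplicity $4, 6, \dots$ — by the usual combinatorial argument: each such coincidence costs a factor of order $\sum_p a(p)^4/p^2 = O(1)$ or smaller relative to the main term, and the number of such configurations is controlled because $r \le 100 l_k^2$ is small compared to the number of primes in the interval, so the sum over these terms is $o$ of the main term (this is where the hypothesis $r \le 100 l_k^2$ is used, guaranteeing $r$-fold products stay within the valid range for Proposition \ref{1twist} and that the combinatorics don't blow up). Third, I would handle the genuinely off-diagonal tuples — those whose product is not a square — using the error term $O(X^{7/8+\epsilon} n^{3/8} v^{1/4})$ from Proposition \ref{1twist} with $n = p_1 \cdots p_{2r} \ll X^{1/1000}$ (since the total length of such a Dirichlet polynomial is a small power of $X$, analogous to the length bound \eqref{Tbound}), making this contribution negligible after dividing by the $\gg X/(vN_0)$ normalization.

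The main obstacle I anticipate is the careful bookkeeping to show the higher-multiplicity diagonal terms are truly lower order while keeping track of the $\frac{(2r)!}{2^r r!}$ constant exactly — not just up to a multiplicative constant depending on $r$ — since the factorial grows and a sloppy bound of the form $C_r \cdot \frac{(2r)!}{2^r r!}$ with $C_r$ depending on $r$ would be useless for the barrier argument in Section \ref{SecLD}, where $r$ grows. The resolution is the standard one from the proof of Lemma 3 in \cite{Sound}: organize the sum over tuples by the partition of $\{1,\dots,2r\}$ recording which slots carry equal primes, bound the contribution of each partition with at least one block of size $\ge 3$ (or $\ge 4$, since odd blocks force an off-diagonal term) by $\left(\sum_p \frac{a(p)^2}{p}\right)^{r-1} \cdot O(1)$ times a combinatorial factor that is $o$ of $\frac{(2r)!}{2^r r!}$ in the relevant range, and note the $O(1/\log X_j)$ absorbs the discrepancy between $\sum_{X_j < p \le X_k} a(p)^2/p$ and $n_k - n_j$ coming from the initial segment near $X_j$. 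The quadratic-character subtlety — that $\chi_d(p^2) $ contributes $1-1/p$ rather than $1$ — only shifts the main constant by factors $\prod_{p}(1 - 1/p)^{\pm 1}$ which are absorbed into the error after applying Lemma 3 of \cite{RS2}, exactly as in the proof of Lemma \ref{twistmoll}.
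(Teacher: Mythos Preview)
Your overall structure --- expand the $2r$-th power, isolate the square-product (diagonal) terms via quadratic-character orthogonality, and control the non-square (off-diagonal) terms by a power-saving error --- matches the paper's. However, your treatment of the diagonal has a gap, and the paper avoids it by a different combinatorial device.

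You propose to isolate the perfect-pairing partitions as the main term and argue that partitions with a block of size $\ge 4$ contribute a combinatorial factor that is $o\!\left(\frac{(2r)!}{2^r r!}\right)$ ``in the relevant range''. This fails as stated: the number of partitions with one $4$-block and $r-2$ pairs is $\frac{(2r)!}{2^r r!}\cdot\frac{r(r-1)}{6}$, which is \emph{larger} than $\frac{(2r)!}{2^r r!}$; the saving must instead come from the extra factor $\Sigma^{-2}$ (where $\Sigma=\sum_{X_j<p\le X_k} a(p)^2/p\asymp n_k-n_j$). The net ratio of this higher-multiplicity piece to the main term is therefore of order $r^2/\Sigma^2$, and in the range $r\le 100\,l_k^2$ permitted by the lemma this can be enormous (take $j=k-1$: then $\Sigma\asymp\log l_{k-1}$ while $l_k^2$ is a large power of $\log l_{k-1}$). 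So the ``higher multiplicity is lower order'' step does not close with a constant independent of $r$, which --- as you yourself note --- is exactly what the barrier argument requires.

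The paper sidesteps this entirely. It does not split perfect pairings from higher multiplicity. Instead it observes that $\sum_{n_1 n_2=\square} b_{j,k,r}(n_1)b_{j,k,r}(n_2)\le\sum_m b_{j,k,2r}(m^2)$ (essentially the convolution identity $b_r*b_r=b_{2r}$), and then bounds the single sum $\sum_m b_{j,k,2r}(m^2)$ in one stroke via the elementary inequality
\[
\frac{c_1!\cdots c_t!}{(2c_1)!\cdots(2c_t)!}\le 2^{-r}\qquad\text{whenever }c_1+\cdots+c_t=r,
\]
which yields $\sum_m b_{j,k,2r}(m^2)\le\frac{(2r)!}{2^r r!}\bigl(\sum_p a(p)^2/p\bigr)^r$ uniformly in $r$. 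This single inequality absorbs all the higher-multiplicity configurations with the exact constant, and is the missing idea in your sketch. A minor point: for the orthogonality input the paper uses a pure character-sum formula from \cite{RS2} (giving $\sum_d\chi_d(n)\Phi$), not Proposition~\ref{1twist}, which carries the unwanted factor $L(\tfrac12,E_d)$.
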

\begin{proof}[Proof of Lemma \ref{walkmom}]
	We may write \begin{equation}\left(S_{k}-S_{j}\right)^{r}=\sum_{n\le X_k^r} \frac{b_{j,k,r}(n)\chi_d(n)}{n^{\frac{1}{2}}},\end{equation} where $b_{j,k,r}(n)=0$ unless $n$ is the product of $r$ primes with multiplicity, all lying in $(X_j,X_k].$ In this case, if we can express the prime factorisation as $n=\prod^t_{i=1}p_i^{c_i},$ then \begin{equation}
		b_{j,k,r}(n)={r\choose c_1\dots c_t} \prod^t_{i=1}\frac{a(p_i)^{c_i}}{p_i^{\frac{c_i}{2}}}.
	\end{equation}
	Then by Proposition 2 in \cite{RS2}, we have \begin{align}&\label{walkmoment}
		\sum_{\substack{d\in \mathcal{E}(\mathscr{O},a,v)\\ |d|\le X}}\left|S_{k}-S_{j}\right|^{2r}\Phi\left(\frac{\kappa d}{X}\right)=\check{\Phi}(0)\frac{X}{vN_0}\sum_{\substack{n_1n_2=\square\\ (n_1n_2,v)=1}} 	b_{j,k,r}(n_1)	b_{j,k,r}(n_2)\prod_{p|n_1n_2v}\left(1+\frac{1}{p}\right)^{-1}\prod_{p\nmid N_0} \left(1-\frac{1}{p^2}\right)\\\nonumber
		&			+O\left(\sum_{n_1,n_2} \frac{X^{\frac{1}{2}+\frac{1}{10}}}{|\mathcal{E}|\sqrt{n_1n_2}}	b_{j,k,r}(n_1)	b_{j,k,r}(n_2)\right). 
	\end{align}
	Using the inequality frp, Equation \eqref{CS2}, we can show the error term is $\ll \sum_{n} \frac{X^{\frac{1}{2}+\frac{1}{5}}}{|\mathcal{E}|n}b_{j,k,r}(n)^2.$ It remains to consider the main term. 
	We observe that if $(f_j)^{2t}_{t=1}$ are non-negative integers with $\sum^{2t}_{j=1} f_j=2r,$ then
	\begin{equation}
		{2r\choose f_1\dots f_{2t}}=\sum_{\substack{c_1+...+c_t=r\\
				c_{t+1}+...+c_{2t}=r\\
				c_1,...,c_{2t}\ge 0}}{r\choose c_1\dots c_t}{r\choose c_{t+1}\dots c_{2t}}.
	\end{equation}
	Since all the summands in the main term are non-negative, we see	\begin{align}
&		\sum_{\substack{n_1n_2=\square\\ (n_1n_2,v)=1}} 	b_{j,k,r}(n_1)	b_{j,k,r}(n_2)\prod_{p|n_1n_2v}\left(1+\frac{1}{p}\right)^{-1}\prod_{p\nmid N_0} \left(1-\frac{1}{p^2}\right)\\&\nonumber\le\prod_{p|v} (1+\frac{1}{p})^{-1}\prod_{p\nmid N_0}\left(1-\frac{1}{p^2}\right) \sum_{n} b_{j,k,2r}(n^2).
	\end{align}
	We now have to bound the sum over $n$. We have \begin{align}
		\sum_{n}b_{j,k,2r}(n^2)&=\sum_{c_1+...c_t=r} \frac{(2r)!}{(2c_1)!...(2c_t)!}\left(\frac{a(p_1)^2}{p_1}\right)^{c_1}...\left(\frac{a(p_t)^2}{p_t}\right)^{c_t}\nonumber\\
		&\le \label{sum'} \frac{(2r)!}{r!}\sum_{c_1+...c_t=r}\frac{r!}{(c_1)!...(c_t)!}\left(\frac{a(p_1)^2}{p_1}\right)^{c_1}...\left(\frac{a(p_t)^2}{p_t}\right)^{c_t} \max_{\substack{c'_1+...+c'_t=r\\ c'_1,...,c'_t\ge 0}} \frac{(c'_1)!...(c'_t)!}{(2c'_1)!...(2c'_t)!}.
	\end{align}
	The maximum occurs when $r$ of the $c'_i$ are $1$, and the rest are $0$, when \begin{equation}
		\frac{(c'_1)!...(c'_t)!}{(2c'_1)!...(2c'_t)!}=\frac{1}{2^r}.
	\end{equation}  
	We can now perform the sum in Equation \eqref{sum'}, to show
	\begin{equation}
		\sum_{n}b_{j,k,2r}(n^2)\le \frac{(2r)!}{2^rr!}\left(\sum_{X_j\le p\le X_k}\frac{a(p)^2}{p}\right)^r.
	\end{equation}	
	Returning to Equation \eqref{walkmoment}, and using Lemma 3 from \cite{AB24} to bound the sum over primes, we see the main sum has size \begin{equation}
		\le \frac{X}{vN_0} \frac{(2r)!}{2^rr!}\left(n_k-n_{j}+O\left(\frac{1}{\log X_j}\right)\right)^r. 
	\end{equation}
	
	Moreover, for the error term, if we use Equation \eqref{sum'} to show $\sum_n b_{j,k,r}(n)^2\le \sum_m b_{j,k,2r}(m\nonumber^2)$ and proceed from Equation \eqref{sum'}, we see the error term has size \begin{equation}
		\ll \frac{X^{\frac{1}{
					2}+\frac{1}{5}}}{vN_0|\mathcal{E}|} \frac{(2r)!}{2^rr!}\left(n_k-n_{j}+O\left(\frac{1}{\log X_j}\right)\right)^r, 
	\end{equation} 
	which is negligible.
	
	Thus, we may conclude from Equation \eqref{walkmoment}, that
	\begin{equation}
		\E\left[\left|S_{k}-S_{j}\right|^{2r}\right]= \frac{(2r)!2^r}{r!}\left(n_k-n_{j}+O\left(\frac{1}{\log X_j}\right)\right)^r. 
	\end{equation}
	This completes the proof of Lemma \ref{walkmom}
\end{proof} 
The following Lemma allows us to condition on the value of $S_{i}$ for some $i$, and twist by a factor only depending on primes greater than $X_i$. It is adapted from Lemma 2.4 and 2.7 in \cite{AB}, where the corresponding results for the $t$-aspect are proven.

\begin{lem}\label{pointmom}
	Let $0\le r\le R$, and $w\in [L_r,U_r].$ Suppose $Q_{r+1}$ is a Dirichlet polynomial of the form
	\begin{equation}
		Q_{r+1}=\sum_{\substack{p|m \implies p\in P_{r+1}\\ \Omega_{r+1}(m)\le 10(l_{r+1}-l_{r})^{10^4}}}\frac{\chi_d(m)\gamma(m)}{m^{\frac{1}{2}}},
	\end{equation}
	for some choice of real coefficients $\gamma(m).$
	Then \begin{equation}\label{2.4}
		\E\left[Q_{r+1}^2\mathbf{1}(S_{r}\in [w,w+1])\right]\ll \E[Q_{r+1}^2] \frac{e^{-\frac{w^2}{2n_r}}}{\sqrt{n_r}},
	\end{equation}
	and moreover
	\begin{equation}\label{2.7}
		\E\left[LM_{r+1}Q_{r+1}^2\mathbf{1}(S_{r}\in [w,w+1]\cap G_r)\right]\ll \E[Q_{r+1}^2] \log^{-\frac{1}{2}}X_{r}\frac{e^{-\frac{w^2}{2n_r}}}{\sqrt{n_r}}.
	\end{equation}
\end{lem}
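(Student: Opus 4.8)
The plan is to follow the proofs of Lemmata~2.4 and~2.7 of \cite{AB}, the decisive structural fact being that $Q_{r+1}$ (hence $Q_{r+1}^2$) and the mollifier factor $\mathscr A_{r+1}$ are built only from primes in $P_{r+1}=(X_r,X_{r+1}]$, whereas $S_r$ and $M_r$ are built only from primes below $X_r$, so that under the smoothed quadratic orthogonality underlying Proposition~\ref{1twist} the average over $d$ decouples these two ranges. For \eqref{2.4}, since $Q_{r+1}^2\ge 0$ and $\mathbf 1_{[w,w+1]}(x)\le e^{1/8}e^{-\frac12(x-w-\frac12)^2}$ for all real $x$, it suffices to bound $\E\big[Q_{r+1}^2\,e^{-\frac12(S_r-c)^2}\big]$ with $c:=w+\tfrac12$. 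I would insert the Gaussian identity $e^{-\frac12 y^2}=\frac1{\sqrt{2\pi}}\int_{\mathbb R}e^{-\frac12\xi^2}e^{iy\xi}\,d\xi$ at $y=S_r-c$ and interchange the $d$-average with the $\xi$-integral (legitimate since $Q_{r+1}^2\ge0$ and $|e^{i\xi(S_r-c)}|=1$), reducing to $\frac1{\sqrt{2\pi}}\int e^{-\frac12\xi^2}e^{-ic\xi}\,\E[Q_{r+1}^2 e^{i\xi S_r}]\,d\xi$. The trivial bound $|\E[Q_{r+1}^2 e^{i\xi S_r}]|\le\E[Q_{r+1}^2]$ makes $|\xi|>\sqrt{n_r}$ contribute $\ll\E[Q_{r+1}^2]e^{-n_r/2}$, which is negligible against the target $\E[Q_{r+1}^2]n_r^{-1/2}e^{-w^2/2n_r}$ because $w\le U_r<\tfrac12 n_r+\mathbf s\log l_r$ and $\log l_r=o(n_r)$. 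For $|\xi|\le\sqrt{n_r}$ I would compute $\E[Q_{r+1}^2 e^{i\xi S_r}]$: writing $Q_{r+1}^2=\sum_m c_m\chi_d(m)m^{-1/2}$ (coefficients supported on $m$ with all prime factors in $P_{r+1}$, length a small power of $X$) and, at each prime $p<X_r$, using $e^{i\xi\chi_d(p)a(p)p^{-1/2}}=1+(\cos(\xi a(p)p^{-1/2})-1)\chi_d(p^2)+i\sin(\xi a(p)p^{-1/2})\chi_d(p)$, one multiplies out; since the primes of $m$ are disjoint from those below $X_r$, quadratic orthogonality forces $m$ to be a square and annihilates every monomial containing an odd power of some $\chi_d(p)$ with $p<X_r$, so the only surviving terms are those in which each below-$X_r$ factor is the $\chi_d(p^2)$-term. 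This gives $\E[Q_{r+1}^2 e^{i\xi S_r}]=\E[Q_{r+1}^2]\prod_{p<X_r}\big(1-(1-\cos(\xi a(p)p^{-1/2}))\E[\chi_d(p^2)]\big)(1+o(1))$ up to a negligible error; with $\E[\chi_d(p^2)]=1+O(1/p)$, $1-\cos t=\tfrac12 t^2+O(t^4)$ and Lemma~3 of \cite{RS2} ($\sum_{p<X_r}a(p)^2/p=n_r+O(1)$) the product is $\exp\big(-\tfrac{\xi^2 n_r}2+O(\xi^2+\xi^4)\big)$, and carrying out the Gaussian integral $\frac1{\sqrt{2\pi}}\int e^{-\frac12\xi^2(1+n_r+O(1))}e^{-ic\xi}\,d\xi\asymp n_r^{-1/2}e^{-c^2/(2n_r+O(1))}$ together with $c^2/(2n_r+O(1))=w^2/2n_r+O(1)$ (valid since $0\le w<\tfrac12 n_r+\mathbf s\log l_r$) yields \eqref{2.4}.

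For \eqref{2.7}, since $L=L(\tfrac12,E_d)\ge0$ (Waldspurger), $M_{r+1}=\prod_{j\le r+1}\mathscr A_j\ge0$ (Lemma~1 of \cite{RS2}) and $Q_{r+1}^2\ge0$, the indicator of $\{S_r\in[w,w+1]\}\cap G_r$ may be replaced by that of $\{S_r\in[w,w+1]\}$. I would then run the same Gaussian–Fourier reduction with $Q_{r+1}^2$ replaced by the nonnegative quantity $L\,M_{r+1}Q_{r+1}^2=L\,M_r\cdot(\mathscr A_{r+1}Q_{r+1}^2)$; the only new ingredient is the evaluation of $\E[L\,M_{r+1}Q_{r+1}^2 e^{i\xi S_r}]$ for $|\xi|\le\sqrt{n_r}$. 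Here $M_r$ and $e^{i\xi S_r}$ are supported on primes $<X_r$ while $\mathscr A_{r+1}Q_{r+1}^2$ is supported on primes in $P_{r+1}$, so the diagonal analysis of Proposition~\ref{twistmollprop} — Proposition~\ref{1twist} for the average of each twist, Lemma~1 and Lemma~3 of \cite{RS2} for positivity of $\mathscr A_{r+1}$ and for the prime sums — applies with the extra factor $e^{i\xi S_r}$ carried along at the below-$X_r$ primes. It reproduces the same characteristic-function factor $\exp(-\tfrac{\xi^2 n_r}2+O(\xi^2+\xi^4))$, produces the gain $\log^{-1/2}X_r$ from $\prod_{p\le X_r}\exp(-a(p)^2/2p)\ll\log^{-1/2}X_r$ exactly as in Lemma~\ref{twistmoll}, and leaves the contribution of $\mathscr A_{r+1}Q_{r+1}^2$ to the diagonal bounded by $\ll\E[Q_{r+1}^2]$. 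The Gaussian integration then proceeds verbatim, producing the extra factor $\log^{-1/2}X_r$ relative to \eqref{2.4}.

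I expect the main obstacle to be exactly the step ``multiply out and use quadratic orthogonality'': as a Dirichlet series, $e^{i\xi S_r}$ is far too long to feed directly into Proposition~\ref{1twist}, and the contributions of the $\sin(\xi a(p)p^{-1/2})\chi_d(p)$-terms cannot be bounded term by term, since $\sum_{p<X_r}p^{-1/2}$ diverges. This must be handled as in the proof of Lemma~\ref{walkmom}: one truncates the product over $p<X_r$ after sufficiently few prime factors that the resulting Dirichlet polynomial has length a small power of $X$ — the scales $X_1,\dots,X_R$ being arranged precisely so that this is possible, the first block $P_1$ carrying almost all of the variance $n_r$ while the later blocks together contribute variance only $\ll\log\log\log X$ — and the discarded tail is controlled by Rankin's trick and the rapid decay $\sum_{p<X_r}\sin^2(\xi a(p)p^{-1/2})\ll\xi^2 n_r$ of the relevant second moments. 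Preserving the cancellation among the genuinely diagonal (square-product) terms through this truncation, rather than estimating them one by one, is the delicate point, which is why one argues with the whole product over the below-$X_r$ primes simultaneously, exactly as in the proof of Proposition~\ref{twistmollprop}.
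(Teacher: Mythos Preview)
Your approach is sound and captures the essential content of the paper's proof, which is itself merely a citation to Lemmata~2.4 and~2.7 of \cite{AB} (and Lemmata~4.3, 4.4 of \cite{AC25}) together with the observation that Proposition~\ref{twistmollprop} supplies the twisted mollifier formula for well-factorable twists of length up to $X^{1/1000}$. The paper gives no further detail beyond this.

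One point of comparison is worth making. The paper's phrase ``we can twist by the necessary Dirichlet polynomials used to approximate the indicator function of the walk $S_i$ lying in small subintervals of $[L_i,U_i]$'' indicates that the route taken in \cite{AB}/\cite{AC25} is to construct directly a \emph{real, nonnegative} Dirichlet polynomial $D$ (built from truncated exponentials over each $P_j$, hence itself well-factorable of degree $r$) dominating $\mathbf{1}(S_r\in[w,w+1]\cap G_r)$, and then to feed the product $D\cdot Q_{r+1}$ as the well-factorable twist $Q$ into Proposition~\ref{twistmollprop} as a black box. Your Fourier route instead carries the complex factor $e^{i\xi S_r}$ through the Euler-product analysis of Lemma~\ref{twistmoll}; this is more work for \eqref{2.7}, because Proposition~\ref{twistmollprop} is formulated only for real squared twists and delivers an upper bound (with absolute values on the square-free parts) rather than an asymptotic, so the characteristic-function decay $e^{-\xi^2 n_r/2}$ cannot be read off from it directly --- you must, as you say, reopen the prime-by-prime computation and check that the extra factor $\cos(\xi a(p)p^{-1/2})$ sits alongside the existing $\exp(-a(p)^2/2p)$ at each $p<X_r$. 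Both routes are valid; the black-box route is what the paper (and its references) have in mind, and it sidesteps the obstacle you flag in your last paragraph.
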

\begin{proof}
	This follows from the proofs of Lemmata 2.4 and 2.7 in \cite{AB}, and also of Lemmata 4.3 and 4.4 in \cite{AC25}; there is essentially no difference in the proof with the orthogonal family of quadratic twists needed for elliptic curves compared to the $t$-aspect or the $q$-aspect. Since Proposition \ref{twistmollprop} a twisted mollifier formula for the well-factorable twists that allows for twists of length up to $X^{\frac{1}{1000}},$ we can twist by the necessary Dirichlet polynomials used to approximate the indicator function of the walk $S_{i}$ lying in small subintervals of $[L_i,U_i]$ for $0\le i\le r.$
\end{proof}		
\section{Bounds on quadratic forms}\label{Sec: QF}
It is often convenient to be able to split bounds of cross terms in twisted moments into bounds on prime powers. The following Lemma provides a way to split bounds on cross terms, by viewing them as quadratic forms. 
\begin{lem}\label{splitbound} 
	Let $n$ be a positive integer, and for $1\le j\le n,$ let $V_j$ be a real (resp. complex) finite-dimensional vector spaces, and  $Z_j$  and $R_j$  be a
	symmetric (resp. Hermitian) quadratic forms on $V_j$. Suppose that $Z_j$ is non-negative definite, and that for all $1\le j\le n$, and $\underline{\alpha}_j, \underline{\phi}_j\in V_j$ we have:
	
	\begin{equation}\label{onedom}\left| R_j(\underline{\alpha}_j,\underline{\phi}_j)\right|\le \frac{1}{2}\left(Z_j(\underline{\alpha}_j,\underline{\alpha}_j)+Z_j(\underline{\phi}_j,\underline{\phi}_j)\right).\end{equation}
	
	Then $Z:=\otimes_j Z_j$ and $R:=\otimes_j R_j$ are  
	symmetric (resp. Hermitian) quadratic forms,  on $V:=\otimes_j V_j$, with $Z$ non-negative definite. Moreover, for all $\underline{\alpha}$ and $\underline{\phi}$ in $V$, we have 
	\begin{equation}\label{mixdom}
		\left| R(\underline{\alpha},\underline{\phi})\right|\le \frac{1}{2}\left(Z(\underline{\alpha},\underline{\alpha})+Z(\underline{\phi},\underline{\phi})\right).\end{equation}
\end{lem}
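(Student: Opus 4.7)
The plan is to first upgrade the additive bound \eqref{onedom} to a multiplicative Cauchy--Schwarz inequality, and then carry out an induction on $n$. Since the hypothesis is bilinear, applying \eqref{onedom} to the rescaled pair $(t\underline{\alpha}_j, t^{-1}\underline{\phi}_j)$ for any $t>0$ gives
\begin{equation*}
|R_j(\underline{\alpha}_j,\underline{\phi}_j)|\le \tfrac{1}{2}\bigl(t^2 Z_j(\underline{\alpha}_j,\underline{\alpha}_j)+t^{-2}Z_j(\underline{\phi}_j,\underline{\phi}_j)\bigr),
\end{equation*}
and optimising in $t$ yields $|R_j(\underline{\alpha}_j,\underline{\phi}_j)|^2\le Z_j(\underline{\alpha}_j,\underline{\alpha}_j)\,Z_j(\underline{\phi}_j,\underline{\phi}_j)$. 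The converse (AM--GM on the right-hand side) shows that this reformulation is equivalent to \eqref{onedom}, so it suffices to establish the same Cauchy--Schwarz bound for the tensor forms, and then apply AM--GM once more to obtain \eqref{mixdom}.

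Non-negative-definiteness of $Z=\otimes_j Z_j$ is a by-product of the induction. It is enough to treat $n=2$. Modding out by the kernel $N_j:=\{v\in V_j: Z_j(v,v)=0\}$, which is contained in both the left- and right-kernels of $R_j$ by the Cauchy--Schwarz form above, we may assume $Z_j$ is positive definite. Diagonalise $Z_1$ with an orthonormal basis $(e_k)$ so that $Z_1(e_k,e_l)=\delta_{kl}$, and expand any $\underline{\alpha}\in V_1\otimes V_2$ as $\underline{\alpha}=\sum_k e_k\otimes v_k$. Then $Z(\underline{\alpha},\underline{\alpha})=\sum_k Z_2(v_k,v_k)\ge 0$, giving non-negativity.

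For the main bound in the case $n=2$, I will diagonalise the symmetric matrix $M_{kl}:=R_1(e_k,e_l)$ by a unitary change of basis $\tilde e_k=\sum_l U_{lk}e_l$, which preserves $Z_1$-orthonormality. By the Cauchy--Schwarz hypothesis on $R_1$, the eigenvalues $\lambda_k$ of $M$ satisfy $|\lambda_k|\le 1$. Writing $\underline{\alpha}=\sum_k \tilde e_k\otimes \tilde v_k$ and $\underline{\phi}=\sum_k \tilde e_k\otimes \tilde w_k$, the tensor form becomes
\begin{equation*}
R(\underline{\alpha},\underline{\phi})=\sum_k \lambda_k R_2(\tilde v_k,\tilde w_k),\quad Z(\underline{\alpha},\underline{\alpha})=\sum_k Z_2(\tilde v_k,\tilde v_k),
\end{equation*}
and similarly for $Z(\underline{\phi},\underline{\phi})$. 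The triangle inequality and $|\lambda_k|\le 1$ reduce the estimate to $\sum_k \sqrt{Z_2(\tilde v_k,\tilde v_k)\,Z_2(\tilde w_k,\tilde w_k)}$, which is bounded by $\sqrt{Z(\underline{\alpha},\underline{\alpha})\,Z(\underline{\phi},\underline{\phi})}$ by one more Cauchy--Schwarz, in finite-dimensional $\ell^2$.

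The inductive step is then immediate: set $V'=\bigotimes_{j<n}V_j$, $Z'=\bigotimes_{j<n}Z_j$, $R'=\bigotimes_{j<n}R_j$, which satisfy the hypotheses by induction, and apply the $n=2$ argument to the pair $(V',V_n)$. The main obstacle in this scheme is keeping the kernels of the forms under control when they are degenerate: one has to verify that $|R_j(\underline{\alpha}_j,\underline{\phi}_j)|\le \sqrt{Z_j(\underline{\alpha}_j,\underline{\alpha}_j)Z_j(\underline{\phi}_j,\underline{\phi}_j)}$ forces $\ker Z_j$ to lie inside the radical of $R_j$, so that the descent to a quotient on which $Z_j$ is positive definite is well-defined and loses no information. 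Once this is checked the tensor product argument goes through verbatim.
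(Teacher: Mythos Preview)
Your argument is correct. The core idea is the same as the paper's: the hypothesis \eqref{onedom} is equivalent to the eigenvalues of $R_j$ relative to $Z_j$ having absolute value at most $1$, and this bound is multiplicatively stable under tensor products. The packaging differs in two respects. First, the paper handles degeneracy by perturbing each $Z_j$ to be strictly positive definite and passing to a limit, whereas you mod out by $\ker Z_j$ (which your Cauchy--Schwarz reformulation shows lies in the radical of $R_j$). Second, the paper works with all factors at once: writing $A_j=C_j^2$ and observing that $C^{-1}BC^{-1}=\bigotimes_j C_j^{-1}B_jC_j^{-1}$ has eigenvalues $\prod_j\lambda_j^{(i_j)}$, each of modulus at most $1$, gives \eqref{mixdom} in one stroke. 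You instead induct on $n$, diagonalising only the first factor at each step and closing with a discrete Cauchy--Schwarz. Your route is a touch more elementary and makes the role of Cauchy--Schwarz explicit; the paper's is shorter once one is comfortable with tensor spectra. One small terminological slip: in the complex case the matrix $M_{kl}=R_1(e_k,e_l)$ is Hermitian rather than symmetric, but the diagonalisation step is unaffected.
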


\begin{proof}[Proof of Lemma \ref{splitbound}] 
	
	Note that $Z$ is clearly non-negative definite, as the tensor product of non-negative definite quadratic forms. By performing a small perturbation of the $Z_j$, we may assume  that each $Z_j$ is positive definite. We can pick bases   for each vector space $V_j$, and matrices $A_j$ and $B_j$ with respect to the bases such that
	\begin{equation}
		Z_j(\underline{\alpha}_j,\underline{\phi}_j)={^\dagger\underline{\alpha}_j}A_j\underline{\phi}_j, \quad R_j(\underline{\alpha}_j,\underline{\phi}_j)={^\dagger\underline{\alpha}_j}B_j\underline{\phi}_j, \quad 
	\end{equation}
	Since $Z_j$ is symmetric (resp. Hermitian), we can find invertible symmetric (resp. Hermitian)  matrices $C_j$ such that \begin{equation}	
		A_j=C_j^2.\end{equation}
	
	Then $C_j^{-1}B_j C_j^{-1}$ is a symmetric (resp. Hermitian) matrix, and hence has an eigenbasis of $V_j$ with real eigenvalues $(\lambda^{(i_j)}_j).$
	
	The boundedness condition in Equation \eqref{onedom} is equivalent to: \begin{equation}\label{evalbound}
		|\lambda^{(i_j)}_j|\le 1\forall i.\end{equation}
	
	But with respect to the product basis of $V$ formed from the eigenbases, the quadratic $Z$ and $R$ are represented by $A:=\otimes_j A_j$ and $B:=\otimes_j B_j$ respectively, and $C:=\otimes_j C_j$ is  a symmetric (resp. Hermitian) matrix, such that \begin{equation}		 				
		A=C^2.\end{equation}
	
	Since $C^{-1}BC$ is  a symmetric (resp. Hermitian) matrix with eigenvalues $\left(\prod_j \lambda_{j}^{(i_j)}\right)$, and $|\prod_j\lambda_{j}^{(i_j)}|\le 1$ for any choice of eigenvalues by Equation $\eqref{evalbound},$ we see that
	
	\begin{equation}
		\left| R(\alpha,\phi)\right|\le \frac{1}{2}\left(Z(\alpha,\alpha)+Z(\phi,\phi)\right),\end{equation}
	as required.	
\end{proof}
In many applications of Lemma \ref{splitbound}, the quadratic forms will correspond to twists at powers of an individual prime $p$, and we will exploit that $\chi_d(p^{2+u})=\chi_d(p^u)$ whenever $u\ge 1$. The following Lemma gives a convenient form for proving the conditions on Equation \eqref{onedom}. 
\begin{lem}\label{sufconthm}
Let $V$ be a real vector space with basis $B$, and $v_0$ be a given element in $B$. Suppose we have continuous quadratic forms $Z$ and $R$ on $V$ and real values $\theta_1\ge \theta_2\ge 0$ and $\theta_3\ge \theta_4\ge 0$. Moreover, suppose that $\theta_2\ge \theta_4, \theta_1-\theta_2\ge \theta_3-\theta_4$ and that whenever $u,v\in B$ we have
\begin{equation}
		Z(u,v)=\begin{cases}
			\theta_1& u=v=v_0\\
			\theta_2& \text{otherwise}
		\end{cases}, \quad 
	R(u,v)=\begin{cases}
		\theta_3& u=v=v_0\\
		\theta_4& \text{otherwise}
	\end{cases}.\end{equation}
	Then for any choice of $x,y\in V$ we have \begin{equation}\label{thmdef}
		Z(x,x)\ge 0
	\end{equation}
	and \begin{equation}\label{thmdom}
		|R(x,y)|\le \frac{1}{2}\left(Z(x,x)+Z(y,y)\right)
	\end{equation}
\end{lem}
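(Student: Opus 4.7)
The plan is to exploit the very rigid structure of $Z$ and $R$ on the basis to rewrite both forms in terms of just two linear functionals, at which point both conclusions reduce to a one-line AM-GM argument.

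First I would define, for $x = \sum_{u \in B} x_u \, u$ (finite support suffices by the continuity hypothesis, and the general case follows by approximation), the two linear functionals $L_1(x) = \sum_{u \in B} x_u$ and $L_2(x) = x_{v_0}$. By bilinearity and the case analysis on the basis pairs, a direct computation gives
\begin{equation*}
Z(x,y) = \theta_2 \, L_1(x) L_1(y) + (\theta_1 - \theta_2) L_2(x) L_2(y),
\end{equation*}
and similarly
\begin{equation*}
R(x,y) = \theta_4 \, L_1(x) L_1(y) + (\theta_3 - \theta_4) L_2(x) L_2(y).
\end{equation*}

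Equation \eqref{thmdef} is then immediate: $Z(x,x) = \theta_2 L_1(x)^2 + (\theta_1-\theta_2) L_2(x)^2 \ge 0$ since $\theta_2 \ge 0$ and $\theta_1 \ge \theta_2$. For Equation \eqref{thmdom}, setting $a = L_1(x), b = L_1(y), c = L_2(x), d = L_2(y)$, the triangle inequality followed by AM-GM yields
\begin{equation*}
|R(x,y)| \le \theta_4 |ab| + (\theta_3 - \theta_4)|cd| \le \tfrac{\theta_4}{2}(a^2+b^2) + \tfrac{\theta_3 - \theta_4}{2}(c^2+d^2).
\end{equation*}
The hypotheses $\theta_4 \le \theta_2$ and $\theta_3 - \theta_4 \le \theta_1 - \theta_2$ then let me replace the coefficients on the right-hand side by $\theta_2$ and $\theta_1 - \theta_2$ respectively, producing exactly $\tfrac{1}{2}(Z(x,x) + Z(y,y))$.

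There is no real obstacle here; the only mild subtlety is ensuring the decomposition via $L_1$ and $L_2$ is meaningful when $B$ is infinite, but the continuity assumption on $Z$ and $R$ together with the fact that both forms are determined entirely by their values on the two ``directions'' picked out by $L_1$ and $L_2$ (on the basis, and hence by continuity everywhere) makes this harmless.
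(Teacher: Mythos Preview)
Your proof is correct and follows essentially the same approach as the paper: both rewrite $Z$ and $R$ in terms of the two quantities $\sum_u x_u$ and $x_{v_0}$ (your $L_1,L_2$; the paper's $\sum_v\lambda_v$ and $\lambda_{v_0}$), then apply AM--GM (what the paper calls Cauchy--Schwarz) to each product and use the inequalities $\theta_4\le\theta_2$, $\theta_3-\theta_4\le\theta_1-\theta_2$ to pass from $R$ to $Z$. Your packaging via the linear functionals is slightly cleaner, but the argument is the same.
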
 
\begin{proof}[Proof of Lemma \ref{sufconthm}]
	We may define the coefficients of the decomposition into the basis as \begin{equation}x=\sum_{v\in B}\lambda_v v,\quad y=\sum_{v\in B}\mu_v v.\end{equation}
	Then by continuity, we can calculate the quadratic forms on the basis elements, to write
	\begin{align}
	Z(x,x)&=\sum_{u,v\in B} Z(\lambda_u u,\lambda_v v)\\
	&= \lambda_{v_0}^2 \theta_1+\sum_{(u,v)\in B^2\setminus\{(v_0,v_0)\}}\lambda_u \lambda_v \theta_2\\
		&= (\theta_1-\theta_2)\lambda_{v_0}^2 +\theta_2 \left(\sum_{v\in B} \lambda_v\right)^2\ge 0,
	\end{align}
	which completes the proof of Equation \eqref{thmdef}.
	Similarly, we see \begin{align}
		|R(x,y)|&=\left|(\theta_3-\theta_4) \lambda_{v_0}\mu_{v_0}+\theta_4\left(\sum_{u\in B} \lambda_u\right)\left(\sum_{v\in B} \lambda_v \right)\right|\\&\le
		\frac{1}{2}\left((\theta_3-\theta_4)\left(\lambda_{v_0}^2+\mu_{v_0}^2\right)+\theta_4\left(\left(\sum_{u\in B} \lambda_u\right)^2+\left(\sum_{v\in B} \mu_v\right)^2\right)\right), 
 	\end{align}
	where the last line followed by two applications of the Cauchy-Schwarz inequality.
	Using the inequalities between the $\theta_j,$ we complete the proof of Equation \eqref{thmdom}.
\end{proof}

\bibliographystyle{alpha}
\bibliography{Upper_bounds_for_large_deviations_elliptic_curves.bib}

\begin{thebibliography}{HKO01}

\bibitem[AB23]{AB}
L.-P. Arguin and E.~Bailey.
\newblock Large {d}eviation {e}stimates of {S}elberg's {C}entral {L}imit
  {T}heorem and applications.
\newblock {\em Int. Math. Res. Not. IMRN}, (23):20574--20612, 2023.

\bibitem[AB24]{AB24}
L.‐P. Arguin and E.~Bailey.
\newblock Lower {b}ounds for the {l}arge {d}eviations of {S}elberg’s
  {C}entral {L}imit {T}heorem.
\newblock {\em Mathematika}, 71(1), December 2024.

\bibitem[ABR20]{ABR20}
L.-P. {Arguin}, P.~{Bourgade}, and M.~{Radziwi{\l}{\l}}.
\newblock {The Fyodorov-Hiary-Keating Conjecture. I}.
\newblock {\em arXiv e-prints}, page arXiv:2007.00988, July 2020.

\bibitem[AC25]{AC25}
L.-P. Arguin and N.~Creighton.
\newblock Upper bounds on large deviations of {D}irichlet {$L$}-functions in
  the $q$-aspect.
\newblock {\em Journal of Number Theory}, 273:96--158, 2025.

\bibitem[CF00]{CF}
J.~B. Conrey and D.~W. Farmer.
\newblock Mean values of {$L$}-functions and symmetry.
\newblock {\em International Mathematics Research Notices}, 2000(17):883--908,
  01 2000.

\bibitem[GZ24]{GZ24}
P.~Gao and L.~Zhao.
\newblock {F}irst moment of central values of quadratic {D}irichlet
  {$L$}-functions, 2024.

\bibitem[Har13]{Harper}
A.~Harper.
\newblock Sharp conditional bounds for moments of the {R}iemann zeta function.
\newblock 2013.

\bibitem[HB78]{HB78}
D.~R. Heath-Brown.
\newblock {T}he {t}welfth {p}ower moment of the {R}iemann-function.
\newblock {\em The Quarterly Journal of Mathematics}, 29(4):443--462, 12 1978.

\bibitem[HKO01]{hko}
C.~P. Hughes, J.~P. Keating, and N.~O'Connell.
\newblock On the characteristic polynomial of a random unitary matrix.
\newblock {\em Communications in Mathematical Physics}, 220(2):429--451, 2001.

\bibitem[HL16]{HL}
G.~Hardy and J.~Littlewood.
\newblock {C}ontributions to the {t}heory of the {R}iemann {z}eta {f}unction
  and the {t}heory of the {d}istribution of {p}rimes.
\newblock {\em Acta Mathematica}, 41:119--196, 12 1916.

\bibitem[Hou14]{Hough14}
B.~Hough.
\newblock The {d}istribution of the logarithm in an {o}rthogonal and a
  {s}ymplectic {f}amily of {$L$}-functions.
\newblock {\em Forum Mathematicum}, 26(2):523--546, 2014.

\bibitem[HS22]{HSlower}
W.~Heap and K.~Soundararajan.
\newblock Lower bounds for moments of zeta and {$L$}-functions revisited.
\newblock {\em Mathematika}, 68(1):1--14, 2022.

\bibitem[Ing28]{Ing}
A.~E. Ingham.
\newblock {M}ean-{v}alue {t}heorems in the {t}heory of the {R}iemann
  {z}eta-{f}unction.
\newblock {\em Proceedings of the London Mathematical Society},
  s2-27(1):273--300, 1928.

\bibitem[KS99]{KzS}
N.~Katz and P.~Sarnak.
\newblock Zeros of zeta functions and symmetry.
\newblock {\em Bulletin of the American Mathematical Society}, 36(1):1--26,
  1999.

\bibitem[KS00a]{KS}
J.~Keating and N.~Snaith.
\newblock Random matrix theory and {$\zeta(1/2+it)$}.
\newblock {\em Comm. Math. Phys.}, 214(1):57--89, 2000.

\bibitem[KS00b]{KS2}
J.~P. Keating and N.~C. Snaith.
\newblock Random matrix theory and {$L$}-functions at $s$=1/2.
\newblock {\em Communications in Mathematical Physics}, 214(1):57--89, 2000.

\bibitem[Rad11]{4.36}
M~Radziwi{\l}{\l}.
\newblock The 4.36th moment of the riemann zeta-function.
\newblock {\em International Mathematics Research Notices}, 2012, 06 2011.

\bibitem[RS12]{RSlower}
M.~Radziwi{\l}{\l} and K.~Soundararajan.
\newblock Continuous lower bounds for moments of zeta and l-functions.
\newblock {\em Mathematika}, 59, 02 2012.

\bibitem[RS15]{RS2}
M.~Radziwi\l\l{ } and K.~Soundararajan.
\newblock Distribution of {c}entral {$L$}-values of {q}uadratic {t}wists of
  {e}lliptic curves.
\newblock {\em Inventiones mathematicae}, 202(3):1029--1068, March 2015.

\bibitem[Sel46]{Sel}
A.~Selberg.
\newblock Contributions to the theory of the {R}iemann zeta-function.
\newblock {\em Arch. Math. Naturvid.}, 48(5):89--155, 1946.

\bibitem[Sou09]{Sound}
K.~Soundararajan.
\newblock Moments of the {R}iemann zeta function.
\newblock {\em Ann. of Math. (2)}, 170(2):981--993, 2009.

\end{thebibliography}
\end{document}